\crefname{equation}{}{}
\newcommand{\pf}{\rightharpoonup}
\newcommand{\To}{\Rightarrow}
\newcommand{\denotes}{\!\downarrow}
\newcommand{\s}{\hspace{1pt}}
\newcommand{\asm}{\mathsf{Asm}}
\newcommand{\rt}{\mathsf{RT}}
\newcommand{\set}{\mathsf{Set}}
\renewcommand{\sf}[1]{\mathsf{#1}}
\newcommand{\kbar}{\overline{\sf{k}}}
\newcommand{\mono}{\hookrightarrow}
\newcommand{\epi}{\twoheadrightarrow}
\DeclareMathOperator{\dom}{dom}
\DeclareMathOperator{\downset}{\downarrow}
\DeclareMathOperator{\upset}{\uparrow}
\DeclareMathOperator{\id}{id}
\DeclareMathOperator{\er}{ER}
\DeclareMathOperator{\foe}{FOE}
\newtheorem{thm}{Theorem}[section]
\newtheorem{lem}[thm]{Lemma}
\newtheorem{cor}[thm]{Corollary}
\newtheorem{prop}[thm]{Proposition}
\theoremstyle{definition}
\newtheorem{defn}[thm]{Definition}
\newtheorem{rem}[thm]{Remark}
\newtheorem{ex}[thm]{Example}
\newtheorem{warn}[thm]{Warning}
\title{Third-order functionals on partial combinatory algebras}
\author{Jetze Zoethout\\
Department of Mathematics, Utrecht University}
\date{\today}
\begin{document}

\maketitle

\centerline{\textsc{abstract}}

\begin{footnotesize}
Computability relative to a partial function $f$ on the natural numbers can be formalized using the notion of an oracle for this function $f$. This can be generalized to arbitrary partial combinatory algebras, yielding a notion of `adjoining a partial function to a partial combinatory algebra $A$'. A similar construction is known for second-order functionals, but the third-order case is more difficult. In this paper, we prove several results for this third-order case. Given a third-order functional $\Phi$ on a partial combinatory algebra $A$, we show how to construct a partial combinatory algebra $A[\Phi]$ where $\Phi$ is `computable', and which has a `lax' factorization property (\cref{thm:type3} below). Moreover, we show that, on the level of first-order functions, the effect of making a third-order functional computable can be described as adding an oracle for a first-order function.
\end{footnotesize}

\section{Introduction}

Classical computability on the natural numbers can be extended with notions of computability involving functions. For example, we may add an \emph{oracle} for a non-computable partial function $f\colon \mathbb{N}\pf\mathbb{N}$ to our Turing machines, yielding a notion of computability relative to $f$. Another example is given by Kleene's S1-S9, which devises a system of computation with \emph{higher-order} functionals, whose inputs are also functionals, rather than just numbers.

This paper is concerned with computability on (higher-order) functions in the more general context of \emph{partial combinatory algebras} (PCAs), which can be viewed as abstract `models of computation'. Some work has previously been done on this subject. The paper \cite{Af} offers a notion of `oracle computability' for general PCAs, and the paper \cite{type2} generalizes this first paper to second-order functionals. The authors of \cite{type2} also mention that the third-order case seems to be far more difficult. The main result of this paper concerns this third-order case. Specifically, given a third-order functional $\Phi$ on a PCA $A$, we show how to construct a PCA $A[\Phi]$ where $\Phi$ is `computable', and which has a `lax' factorization property (\cref{thm:type3} below). Moreover, we show that, on the level of first-order functions, the effect of making a third-order functional computable can be described as adding an oracle for a first-order function.

As we said, this paper is concerned with computability on \emph{functions}. In the paper \cite{PCAfunctions}, it is shown that, for each PCA $A$, there is another PCA $\mathcal{B}A$ whose elements are partial \emph{functions} on $A$. The main strategy of this paper is to view higher-order functionals on $A$ as \emph{lower-order} functionals on $\mathcal{B}A$. In developing this strategy, the paper also serves as a unification of the material from \cite{Af} and \cite{type2} on the one hand, and \cite{PCAfunctions} on the other hand. However, there is also a serious obstacle for this strategy. It turns out that the elements of $\mathcal{B}A$ lead a kind of `double life'. On the one hand, they are elements of $\mathcal{B}A$, but on the other hand, for each $\alpha\in\mathcal{B}A$, there are (other) elements from $\mathcal{B}A$ that \emph{compute} $\alpha$ in a specific sense. These cannot always be translated into one another, and we will give an explicit example of this phenomenon (\cref{ex:counterexample}).

Let us briefly outline the paper. First of all, in \cref{sec:ROPCA}, we introduce partial combinatory algebras and the realizability toposes (denoted by $\rt$) that can be constructed out out of them. In this paper, PCAs will always be \emph{relative} and \emph{ordered}; the main reason for considering such a general notion of PCAs is that it allows us to construct a geometric surjection $\rt(\mathcal{B}A)\epi\rt(A)$. The relevant morphisms between PCAs are introduced in \cref{sec:pam}, which yields a preorder-enriched category of PCAs. \cref{sec:ROPCA} contains no new material, but \cref{sec:pam} treats a slight innovation with respect to the literature, namely the notion of a \emph{partial} applicative morphism, which is specific to \emph{relative} PCAs. In \cref{sec:order1}, we revisit the paper \cite{Af}, showing how to `freely adjoin' a partial function $f\colon A\pf A$ to a PCA $A$, yielding a new PCA $A[f]$. The construction is largely the same as in \cite{Af}, but we have made it suitable for the relative ordered case. Next, \cref{sec:BA} introduces the PCA $\mathcal{B}A$ of partial functions as in \cite{PCAfunctions}, but with one important deviation. One of the central views of this paper is that $\mathcal{B}A$ is best viewed as a relative, ordered PCA, even if $A$ itself is an `ordinary' PCA, i.e., lacking a notion of relativity and an order. In this section, we also show how, at the level of realizability toposes, $A[f]$ may be reconstructed using the construction $\mathcal{B}-$, slicing, and image toposes. This construction depends heavily on the fact that there is a geometric surjection $\rt(\mathcal{B}A)\epi\rt(A)$, and therefore, on our treatment of $\mathcal{B}A$ as a relative, ordered PCA. Then, in \cref{sec:type2}, we treat the second-order case, where we reinterpret the construction from \cite{type2} as a construction that actually takes place in $\mathcal{B}A$. We also give an explicit example of the `obstacle' mentioned above (\cref{ex:counterexample}). Finally, in \cref{sec:type3}, we present the aforementioned results on the third-order case.

\section{Relative ordered PCAs}\label{sec:ROPCA}

A \emph{partial combinatory algebra} is a non-empty set $A$ equipped with a \emph{partial} binary operation, called application. We think of the image of a pair $(a,b)$ under this application map as the result, if defined, of applying the algorithm (with code or G\"odel number) $a$ to the input $b$. In order to capture this computational intuiton, this application will need to satisfy a few requirements, to be specified below. In the current setting, we add two extra features to this application map. First of all, we equip our PCAs with a partial order. We think of $a'\leq a$ as saying that $a'$ gives more information than $a$, or that $a'$ is a refinement of $a$. Second, we specify a \emph{filter}, which is a subset of $A$ satisfying certain properties. We think of the elements of this filter as the `computable' elements, or as those (codes of) algorithms that can actually be carried out. Let us start by defining structures equipped with an application map and a partial order.

\begin{defn}\label{defn:PAP}
A \emph{partial applicative poset} (abbreviated PAP) is a triple $A=(A,\cdot,\leq)$ where $(A,\leq)$ is a poset and $\cdot$ is a partial binary map $A\times A\pf A$, called the \emph{application map}, such that the following axiom is satisfied:
\begin{itemize}
\item[(A)]	the application map has downwards closed domain and preserves the order, i.e., if $a'\leq a$, $b'\leq b$ and $a\cdot b$ is defined, then $a'\cdot b'$ is defined as well, and $a'\cdot b'\leq a\cdot b$.
\end{itemize}
The PAP $A$ is called \emph{total} if the application map is total, and \emph{discrete} if $\leq$ is the discrete order.
\end{defn}

Axiom (A) fits the informal intuition about the order on $A$: if $a'$ and $b'$ contain at least as much information as $a$ resp.\@ $b$, and $a\cdot b$ is already defined, then $a'\cdot b'$ should also be defined and contain at least as much information as $a\cdot b$.

Before we proceed to add the second extra feature mentioned above, let us agree on some notation and describe some basic constructions for PAPs. First of all, we will usually omit the dot for application, and just write $ab$ for $a\cdot b$. Since the application map is not required to be associative, and in fact usually \emph{will not} be associative, the bracketing of expressions is relevant. Here we adopt the convention, as is customary, that application associates to the left, meaning that $abc$ is an abbreviation of $(ab)c$.

Second, since the application map is partial, we will have to deal with expressions that may or may not be defined. If $e$ is a possibly undefined expression, then we write $e\denotes$ to indicate that $e$ is in fact defined. We take this to imply that all subexpressions of $e$ are defined as well. If $e$ and $e'$ are possibly undefined expressions, then we write $e'\preceq e$ to mean: if $e\denotes$, then $e'\denotes$ as well, and $e'\leq e$. Observe that axiom (A) above may now be rewritten as: if $a'\leq a$ and $b'\leq b$, then $a'b'\preceq ab$. On the other hand, we write $e'\leq e$ to indicate that $e'$ and $e$ are in fact defined, and $e'\leq e$. Observe that, in the discrete case, $e'\preceq e$ reduces to \emph{Kleene inequality}: if $e$ is defined, then $e'$ is also defined and denotes the same value.

Similarly, we write $e'\simeq e$ iff $e'\preceq e$ and $e\preceq e'$; in other words, $e\denotes$ precisely when $e'\denotes$, and in this case $e$ and $e'$ assume the same value. In other words, $\simeq$ is the familiar \emph{Kleene equality}. On the other hand, $e'=e$ expresses the stronger statement that $e'$ and $e$ are in fact defined, and equal to each other.

The following object will become increasingly important in the remainder of the paper.
\begin{defn}\label{defn:BA_as_poset}
Let $A$ be a PAP.
\begin{itemize}
\item[(i)]	We define $\mathcal{B}A$ as the set of all partial functions $\alpha\colon A\pf A$ such that $a\leq b$ implies $\alpha(a)\preceq \alpha(b)$ for all $a,b\in A$.
\item[(ii)]	For $\alpha,\beta\in \mathcal{B}A$, we say that $\alpha\leq\beta$ if $\alpha(a)\preceq \beta(a)$ for all $a\in A$.
\end{itemize}
\end{defn}
In other words, $\alpha\in \mathcal{B}A$ if and only if its domain is downwards closed, and $\alpha$ is order-preserving on its domain. The statement $\alpha\leq\beta$ means that the domain of $\alpha$ extends the domain of $\beta$, and $\alpha\leq \beta$ holds pointwise on the domain of $\beta$. Clearly, this makes $\mathcal{B}A$ into a poset; later,we shall see that it can be equipped with a PCA structure. Observe that, in the discrete case, $\mathcal{B}A$ is simply the set of all partial functions on $A$, and the order is the \emph{reverse} subfunction relation.

\begin{warn}\label{warn:Kleene}
Some authors (including myself in other papers) discussing the discrete case use the `opposite' convention for Kleene inequality, writing $e'\succeq e$ where we write $e'\preceq e$. The reason for doing so is that in this way, the corresponding order on partial functions is the actual subfunction relation, and not the reverse one. In the context of \emph{ordered} PCAs, however, the current convention is the right one to adopt. Indeed, in the case where expressions are defined, one should like $e'\preceq e$ to imply $e'\leq e$, and not $e'\geq e$. Moreover, the order defined on $\mathcal{B}A$ matches our intuition about orders. Indeed, $\alpha\leq \beta$ means that $\alpha$ provides more information than $\beta$, either by specifying more values than $\beta$, or by adding information to values already specified by $\beta$.
\end{warn}

\begin{ex}\label{ex:app=meet}
If $(A,\leq)$ is a poset with finite meets, then it can be made into a total PAP by setting $ab = a\wedge b$.
\end{ex}

\begin{ex}\label{ex:DA}
Let $A = (A,\cdot,\leq)$ be a PAP. We write $DA$ for the set of \emph{downwards closed} subsets of $A$, i.e., the set of all $\alpha\subseteq A$ satisfying: if $a'\leq a$ and $a\in \alpha$, then $a'\in \alpha$. Clearly, $DA$ is partially ordered by inclusion. We make $DA$ into a PAP by defining an application map as follows. If $\alpha,\beta\in DA$, then we say that $\alpha\beta\denotes$ if and only if $ab\denotes$ for all $a\in \alpha$ and $b\in\beta$. In this case, $\alpha\beta$ is defined as $\downset\{ab\mid a\in\alpha, b\in\beta\}$, i.e., the downwards closure of $\{ab\mid a\in\alpha, b\in\beta\}$.

Restricting $DA$ to the set $TA$ of \emph{non-empty} downwards closed subsets of $A$ also yields a PAP.
\end{ex}

The PAP $DA$ will play an important role in the sequel of the paper. The following notation will be convenient when working with $DA$.
\begin{defn}\label{defn:short}
For $a\in A$ and $\alpha\in DA$, we write
\[
a\alpha :\simeq \downset\{a\}\cdot\alpha \simeq \downset\{aa'\mid a'\in\alpha\},
\]
and a similar definition applies for expressions of the form $a\alpha\beta$, etc.
\end{defn}

We proceed to define filters.
\begin{defn}\label{defn:filter}
Let $A = (A,\cdot,\leq)$ be a PAP. A \emph{filter} of $A$ is an non-empty subset $F\subseteq A$ that is:
\begin{itemize}
\item[(i)]	closed under application, i.e., if $a,b\in F$ and $ab\denotes$, then also $ab\in F$;
\item[(ii)]	upwards closed, i.e., if $a\leq b$ and $a\in F$, then also $b\in F$.
\end{itemize}
\end{defn}

\begin{ex}\label{ex:filter=filter}
If $(A,\leq)$ is a poset with finite meets, then a filter on $(A,\wedge, \leq)$ is a filter in the usual order-theoretic sense.
\end{ex}

\begin{ex}\label{ex:filter_trans}
\begin{itemize}
\item[(i)]	If $A$ is a PAP, and $F$ is a filter on $A$, then $F$ can also be made into a PAP, by restricting both the application map and the order to $F$. This new PAP will be denoted by $(F,\cdot,\leq)$, or simply by $F$.
\item[(ii)]	If $A$ is a PAP, $F$ is a filter on $A$, and $G$ is a filter on the PAP $F$, then $G$ is also a filter on $A$.
\end{itemize}
\end{ex}

Since a filter is defined as a non-empty set with certain closure properties, we can consider the notion of a \emph{generated filter}.

\begin{defn}\label{defn:generate}
Let $A$ be a PAP and let $X$ be a non-empty subset of $A$. We define $\langle X\rangle$ as the smallest filter on $A$ extending $X$, and we call this the filter \emph{generated} by $X$. 
\end{defn}

In the case of filters on meet-semilattices, one can always generate a filter by first taking all finite meets, and then closing upwards. In the current case, a similar description is available. Befor we can formulate it, we need the notion of a \emph{term}, which will also be central to the definition of PCAs later in this section.

\begin{defn}\label{defn:term}
Let $A$ be a PAP. The set of \emph{terms} over $A$ is defined recursively as follows:
\begin{itemize}
\item[(i)]	We assume given a countably infinite set of disinct variables, and these are all terms.
\item[(ii)]	For every $a\in A$, we assume that we have a \emph{constant symbol} for $a$, and this is a term. The constant symbol for $a$ is simply denoted by $a$.
\item[(iii)]	If $t_0$ and $t_1$ are terms, then so is $(t_0\cdot t_1)$ (but we usually just write $(t_0t_1)$, and we omit brackets according to our convention).
\end{itemize}
\end{defn}
If $t=t(\vec{x})$ is a term whose variables are among the sequence $\vec{x}$, then we can assign an obvious, possibly undefined, interpretation $t(\vec{a})\in A$ to an input sequence $\vec{a}\in A$. In this way, every term $t(\vec{x})$ yields a partial function $\lambda\vec{a}.t(\vec{a})\colon A^n\pf A$, where $n$ is the length of the sequence $\vec{x}$.

We have the following alternative descriptions of generated filters; the proof is easy and omitted.
\begin{lem}\label{lem:generate}
Let $A$ be a PAP and $X\subseteq A$ be non-empty. Then
\[
\langle X\rangle = \upset\{t(\vec{a})\mid t(\vec{x})\mbox{ \textup{a constant-free term}}, \vec{a}\in X\mbox{ \textup{and} }t(\vec{a})\denotes\},
\]
where $\upset$ stands for taking the upwards closure.
\end{lem}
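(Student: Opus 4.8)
The plan is to show that the right-hand side, call it $Y := \upset\{t(\vec{a}) \mid t(\vec{x}) \text{ a constant-free term}, \vec{a}\in X, t(\vec{a})\denotes\}$, is a filter containing $X$, and that every filter containing $X$ contains $Y$; since $\langle X\rangle$ is by definition the smallest such filter, this gives $\langle X\rangle = Y$. The inclusion $X\subseteq Y$ is immediate by taking $t$ to be a single variable. The minimality direction is also essentially immediate: if $F$ is any filter with $X\subseteq F$, then for any constant-free term $t(\vec{x})$ and $\vec{a}\in X$ with $t(\vec{a})\denotes$, a routine induction on the structure of $t$ (using closure of $F$ under application, item (i) of \cref{defn:filter}) shows $t(\vec{a})\in F$; then upwards closure of $F$ (item (ii)) gives $\upset\{t(\vec a)\} \subseteq F$, hence $Y\subseteq F$.

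The real content is checking that $Y$ itself is a filter. Non-emptiness follows from $X$ being non-empty. Upwards closure is built into the definition of $Y$ via $\upset$. So the one genuine thing to verify is closure under application: suppose $c, c'\in Y$ and $cc'\denotes$; we must show $cc'\in Y$. By definition of $Y$ there are constant-free terms $s(\vec x), s'(\vec x')$ and tuples $\vec a, \vec a'\in X$ with $s(\vec a)\denotes$, $s'(\vec a')\denotes$, $s(\vec a)\le c$ and $s'(\vec a')\le c'$. After renaming variables so that $\vec x$ and $\vec x'$ are disjoint, form the term $t := s\cdot s'$ over the concatenated variable list, and the tuple $\vec b := \vec a \frown \vec a' \in X$ (entries all from $X$). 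Then $t(\vec b) \simeq s(\vec a)\cdot s'(\vec a')$. Now by axiom (A) applied to $s(\vec a)\le c$ and $s'(\vec a')\le c'$, from $cc'\denotes$ we get $s(\vec a)s'(\vec a')\denotes$ and $s(\vec a)s'(\vec a')\le cc'$. Hence $t(\vec b)\denotes$ with $t(\vec b)\le cc'$, so $cc'\in\upset\{t(\vec b)\}\subseteq Y$, as required.

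The main (mild) obstacle is purely bookkeeping: making sure the variable-renaming and tuple-concatenation step is set up cleanly so that $t = s\cdot s'$ is genuinely a constant-free term in the sense of \cref{defn:term} and its interpretation at $\vec b$ really is $s(\vec a)\cdot s'(\vec a')$. One should note that constant-freeness is preserved by forming $(t_0\cdot t_1)$, and that we never need constant symbols precisely because the ``seed'' elements enter as values $\vec a\in X$ substituted for variables rather than as constants. This is exactly why the description uses constant-free terms. Everything else — the order-preservation and definedness propagation — is handed to us by axiom (A) in the form ``if $a'\le a$ and $b'\le b$ then $a'b'\preceq ab$'', which is the only non-trivial ingredient and is used exactly once. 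Accordingly the proof is short enough that the paper is right to omit it.
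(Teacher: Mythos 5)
Your proof is correct and is precisely the routine argument the paper has in mind when it declares the proof ``easy and omitted'': show the right-hand side is a filter containing $X$ (the only real step being closure under application via axiom (A)) and that it sits inside every filter containing $X$ by induction on terms. The bookkeeping point you flag about renaming variables and concatenating tuples is handled correctly, so there is nothing to add.
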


As promised, we will consider partial applicative preorders equipped with a filter.
\begin{defn}
A \emph{partial applicative structure} (abbreviated PAS) is a quadruple $A = (A,A^\#, \cdot, \leq)$, where $(A,\cdot,\leq)$ is a PAP, and $A^\#$ is a filter on $(A,\cdot,\leq)$. So explicitly, $A^\#$ is a non-empty subset of $A$ satisfying the following axioms:
\begin{itemize}
\item[(B)]	$A^\#$ is closed under application;
\item[(C)]	$A^\#$ is upwards closed.
\end{itemize}
The PAS $A$ is called \emph{absolute} if $A^\# = A$. Moreover, a \emph{filter} on $A$ is a filter $F$ on $(A,\cdot,\leq)$ such that $A^\#\subseteq F$.
\end{defn}

\begin{ex}\label{ex:filter=PAS}
Let $A$ be a PAS and let $F$ be a filter on $A$. Then $(F,A^\#,\cdot,\leq)$ is also a PAS. When no confusion can arise, we will denote this PAS simply  by $F$. Of course, $(A,F,\cdot,\leq)$ is also a PAS.
\end{ex}

\begin{ex}\label{ex:DA=PAS}
If $F$ is a filter on the PAP $A$, then $\{\alpha\in DA\mid \alpha\cap F\neq \emptyset\}$ is a filter on the PAP $DA$. In particular, if $A$ is a PAS, then we can make $DA$ into a PAS as well by setting $(DA)^\# = \{\alpha\in DA\mid \alpha\cap A^\#\neq\emptyset\}$. Similar remarks hold for $TA$.
\end{ex}

Thus far, all the structure we have introduced is of an algebraic nature, and does not yet express a notion of computability. Now let us finally introduce the `computational' component of PCAs.

\begin{defn}\label{defn:PCA}
A \emph{partial combinatory algebra} (abbreviated PCA) is a PAS $A$ for which there exist $\sf{k},\sf{s}\in A^\#$ such that:
\begin{itemize}
\item[(D)]	$\sf{k}ab \leq a$;
\item[(E)]	$\sf{s}ab\denotes$;
\item[(F)]	$\sf{s}abc\preceq ac(bc)$,
\end{itemize}
for all $a,b,c\in A$. 
\end{defn}
The elements $\sf{k}$ and $\sf{s}$ are usually called \emph{combinators}. Using these combinators, every computation using the application map can be represented by a computable element (i.e., algorithm) from $A$ itself. In order to make this statement precise, we use the terms introduced in \cref{defn:term}. As we mentioned, every term defines a partial function $A^n\pf A$. The key fact about PCAs is that such partial functions are (laxly) computable using an element from $A$ itself.
\begin{prop}[Combinatory completeness]\label{prop:comb_comp}
Let $A$ be a PCA. There exists a map that assigns to each term\footnote{Strictly speaking: a term along with an ordered sequence of distinct variables containing the variables from the term. A more formal treatment could be given using terms-in-context, but we will not take this trouble here.} $t = t(\vec{x},y)$ with at least one variable, an element $\lambda^\ast \vec{x},y.t \in A$, satisfying:
\begin{itemize}
\item[\textup{(}i\textup{)}]	$(\lambda^\ast \vec{x},y.t)\vec{a}\denotes$ (where $\vec{a}$ has the same length as $\vec{x}$);
\item[\textup{(}ii\textup{)}]	$(\lambda^\ast \vec{x},y.t)\vec{a}b\preceq t(\vec{a},b)$;
\item[\textup{(}iii\textup{)}]	if all the constants occurring in $t$ are from $A^\#$, then $\lambda^\ast\vec{x},y.t\in A^\#$ as well.
\end{itemize}
\end{prop}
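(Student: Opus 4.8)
The plan is to follow the classical route to combinatory completeness --- iterated single-variable lambda abstraction --- taking care of the two features specific to the present setting, namely partiality/order and relativity. First I would record a few auxiliary observations. Put $\sf{i} := \sf{s}\sf{k}\sf{k}$. By axiom (E) the subexpression $\sf{s}\sf{k}$ of $\sf{s}\sf{k}\sf{k}$ is defined, so $\sf{i}\denotes$, and $\sf{i}\in A^\#$ by closure of $A^\#$ under application (axiom (B)) since $\sf{s},\sf{k}\in A^\#$; moreover $\sf{i}a\preceq \sf{k}a(\sf{k}a)$ by (F) and $\sf{k}a(\sf{k}a)\leq a$ by (D), so in fact $\sf{i}a\leq a$ for every $a\in A$. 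I would also note, once and for all, that an atomic term (a variable or a constant) always denotes under any interpretation of its variables, and that a closed term all of whose constants lie in $A^\#$, if it denotes, denotes an element of $A^\#$ --- an immediate induction on the term using axiom (B).

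Next I would define single-variable abstraction $\lambda^\ast x.t$ by recursion on the term $t$: set $\lambda^\ast x.x := \sf{i}$; set $\lambda^\ast x.t := \sf{k}t$ when $t$ is a variable distinct from $x$ or a constant; and set $\lambda^\ast x.(t_0t_1) := \sf{s}(\lambda^\ast x.t_0)(\lambda^\ast x.t_1)$. Here I would deliberately use $\sf{s}$ in the application case \emph{even when $x$ does not occur} in $t_0t_1$, since this is precisely what secures definedness below; the free variables of $\lambda^\ast x.t$ are those of $t$ with $x$ removed. Then I would prove, by induction on $t$, the following single-step lemma: for every term $t$ with variables among $\vec{z},x$ and every interpretation $\vec{c}$ of $\vec{z}$, (a) $(\lambda^\ast x.t)(\vec{c})\denotes$, and (b) $(\lambda^\ast x.t)(\vec{c})\cdot b\preceq t(\vec{c},b)$ for all $b\in A$. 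The base cases are immediate from the observations above together with (D) and (E); in the application case, (a) follows from (E) (which makes $\sf{s}pq$ defined for all $p,q$) and induction hypothesis (a), while (b) follows from (F) together with hypothesis (b) and monotonicity of application, axiom (A). Tracking constants through this recursion shows that $\lambda^\ast x.t$ introduces no constants beyond $\sf{s}$ and $\sf{k}$ (recall $\sf{i}=\sf{s}\sf{k}\sf{k}$).

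I would then set $\lambda^\ast\vec{x},y.t := \lambda^\ast x_1.(\lambda^\ast x_2.(\cdots(\lambda^\ast x_n.(\lambda^\ast y.t))\cdots))$, a closed term, and verify (i)--(iii) by peeling off the abstractions one at a time. Writing $\sigma := \lambda^\ast y.t$, $\tau_n := \lambda^\ast x_n.\sigma$ and $\tau_k := \lambda^\ast x_k.\tau_{k+1}$ for $k<n$, so that $\lambda^\ast\vec{x},y.t = \tau_1$, one shows by induction on $k\leq n$ that $\tau_1 a_1\cdots a_{k-1}\denotes$ and $\tau_1 a_1\cdots a_{k-1}\leq \tau_k(a_1,\ldots,a_{k-1})$: at each step, definedness of the next application is supplied by clause (a) of the single-step lemma applied to $\tau_{k+1}$, and axiom (A) lets one push the inequality through. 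Running this down to $k=n$ and applying clause (b) twice more --- once for the $x_n$-abstraction and once for $\sigma = \lambda^\ast y.t$, with $\sigma(\vec{a})\denotes$ again coming from clause (a) --- yields (i) and (ii); and (iii) holds because $\tau_1$ is a closed term whose constants are the constants of $t$ together with $\sf{s},\sf{k}$, so if the former all lie in $A^\#$ then $\tau_1\in A^\#$ by the closed-term observation above.

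The part I expect to require the most care is the definedness clause (i)/(a), and it is bookkeeping rather than depth. Because application is partial and the combinator axioms (E), (F) guarantee only that $\sf{s}pq$ (two arguments) is total --- not $\sf{s}pqb$ --- one must resist the usual optimization $\lambda^\ast x.t := \sf{k}t$ when $x$ does not occur in $t$, which would make $(\lambda^\ast x.(t_0t_1))(\vec{c}) = \sf{k}\bigl(t_0(\vec{c})\,t_1(\vec{c})\bigr)$ undefined whenever $t_0(\vec{c})\,t_1(\vec{c})$ is, and one must thread clause (a) of the single-step lemma through every stage of the peeling-off argument. By contrast, the ordered and relative refinements cost essentially nothing: axiom (A) replaces equalities by $\preceq$ exactly where the partial-function values are propagated, and axiom (B) with $\sf{k},\sf{s}\in A^\#$ takes care of (iii).
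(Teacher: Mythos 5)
Your proposal is correct and follows essentially the same route as the paper: the same definition of $\sf{i}=\sf{skk}$, the same recursive single-variable abstraction (with $\sf{k}s$ only for atomic terms and $\sf{s}(\lambda^\ast x.t_0)(\lambda^\ast x.t_1)$ for applications), and the same iteration to handle several variables; you merely write out the verification that the paper leaves to the reader. Your remark on why the application case must use $\sf{s}$ even when $x$ does not occur is exactly the right subtlety to flag for definedness of clause (i).
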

\begin{proof}
Define the element $\sf{i}\in A^\#$ as $\sf{skk}$. We will give a slightly more general construction than required for the proposition. For a variable $u$ and a term $s$, we define a new term $\lambda^\ast u.s$ with the following properties:
\begin{itemize}
\item	the free variables of $\lambda^\ast u.s$ are those of $s$ minus $u$;
\item	if $\vec{v}$ are the free variables of $\lambda^\ast u.s$, then the substitution instance $(\lambda^\ast u.s)[\vec{b}/\vec{v}]$ is defined for all $\vec{b}\in A$;
\item	moreover, if $a\in A$, then $(\lambda^\ast u.s)[\vec{b}/\vec{v}]\cdot a\preceq s[\vec{b}/\vec{v},a/u]$;
\item	if all the constants occuring in $s$ are from $A^\#$, then the same holds for $\lambda^\ast u.s$.
\end{itemize}
We define this new term resursively:
\begin{itemize}
\item	If $s$ is a constant or a variable distinct from $u$, then $\lambda^\ast u.s$ is $\sf{k}s$.
\item	If $s$ is the variable $u$, then $\lambda^\ast u.s$ is $\sf{i}$.
\item	If $s$ is $s_0s_1$, then $\lambda^\ast u.s$ is $\sf{s}(\lambda^\ast u.s_0)(\lambda^\ast u.s_1)$.
\end{itemize}
We leave the verification of the stated properties to the reader.

Now, if $\vec{x} = x_0, \ldots, x_{n-1}$, then we define $\lambda^\ast\vec{x},y.t$ as (the interpretation of) the closed term
\[
\lambda^\ast x_0.(\cdots(\lambda^\ast x_{n-1}.(\lambda^\ast y.t))\cdots).
\]
The verification of the properties (i), (ii) and (iii) is also left to the reader. Details may also be found in Chapter 1 of \cite{jaap} (which treats the discrete, absolute case, but this can easily be generalized to our case).
\end{proof}

Some useful combinators besides $\sf{k}$ and $\sf{s}$ are $\sf{i}=\sf{skk}$ defined above, $\kbar = \sf{ki}$, $\sf{p} = \lambda^\ast xyz.zxy$, $\mathsf{p}_0 = \lambda^\ast x. x\sf{k}$ and $\sf{p}_1 = \lambda^\ast x. x\kbar$. Observe that these combinators all belong to $A^\#$ and satisfy:
\[
\sf{i}a\leq a,\quad \kbar ab\leq b, \quad \sf{p}_0(\sf{p}ab)\leq a \quad\mbox{and}\quad \sf{p}_1(\sf{p}ab)\leq b.
\]
In particular, $\sf{p}ab$ is always defined, and we think of this element as (coding) the \emph{pair} $(a,b)$. Accordingly, $\sf{p}$ is called the pairing combinator, and $\sf{p}_0$ and $\sf{p}_1$ are known as the unpairing combinators.

Moreover, we can construct \emph{booleans}, i.e. elements $\top, \bot\in A^\#$ for which there is a \emph{case operator} $\sf{C}\in A^\#$ satisfying $\sf{C}\top ab\leq a$ and $\sf{C}\bot ab\leq b$. Indeed, we may simply take $\top = \sf{k}$, $\bot = \kbar$ and $\sf{C} = \sf{i}$. When we are dealing with expressions that are possibly undefined, we need to be a bit more careful. Suppose we have terms $t_0(\vec{x})$, $t_1(\vec{x})$ and $t_2(\vec{x})$, and define the new term $t:=\sf{C}t_0t_1t_2$, whose free variables are also among $\vec{x}$. Then this term does not behave as one would expect at first glance. In particular, if $t_0(\vec{a})\leq\top$ and $t_1(\vec{a})\denotes$, then it does \emph{not} follow that $t(\vec{a})$ is defined. Indeed, it may happen that $t_2(\vec{a})$ fails to be defined and, since $t_2$ is a subterm of $t$, this prevents $t(\vec{a})$ from being defined. We clearly do not want this, since we are not interested in the value (if any) of $t_2(\vec{a})$ when $t_0(\vec{a})\leq\top$. Therefore, we introduce a \emph{strong case distinction} (we take this terminology from \cite{HOC}, Section 3.3.3.). If $t_0(\vec{x})$, $t_1(\vec{x})$ and $t_2(\vec{x})$ are terms, then we define a new term $t'(\vec{x})$ as:
\[
\sf{C}t_0(\lambda^\ast y. t_1)(\lambda^\ast y. t_2)\sf{i}.
\]
where $y$ is not among the $\vec{x}$. One can easily check that this term has the following property: if $t_0(\vec{a})\leq \top$, then $t'(\vec{a})\preceq t_1(\vec{a})$, whereas if $t_0(\vec{a})\leq \bot$, then $t'(\vec{a})\preceq t_2(\vec{a})$. We will denote the term $t'$ above by $\sf{if}\ t_0\ \sf{then}\ t_1\ \sf{else}\ t_2$. Observe that, if all parameters from $t_0$, $t_1$ and $t_2$ are in $A^\#$, then the same holds for $\sf{if}\ t_0\ \sf{then}\ t_1\ \sf{else}\ t_2$.

As in ordinary recursion theory, we have \emph{fixpoint operators}. Using the terminology from \cite{HOC}, Section 3.3.5, we have a fixpoint operator $\sf{y}\in A^\#$ and a guarded fixpoint operator $\sf{z}\in A^\#$ satisfying: $\sf{y}a\preceq a(\sf{y}a)$, $\sf{z}a\denotes$ and $\sf{z}ab\preceq a(\sf{z}a)b$. These may be constructed as $\sf{y} = uu$ where $u=\lambda^\ast xy.y(xxy)$ and $\sf{z} = vv$ where $v=\lambda^\ast xyz.y(xxy)z$. The fixpoint operator $\sf{y}$ is generally only useful in total PCAs, since $\sf{y}a\preceq a(\sf{y}a)$ will always be true if $\sf{y}a$ is not defined. The guarded fixpoint operator $\sf{z}$ has the property that $\sf{z}a$ is always defined, and can be used to create self-referential definitions. Explicitly, if $t(x,y)$ is a term, then setting $a:=\lambda^\ast xy. t$ yields an element $T := \sf{z}a$ with the property that $Tb \preceq aTb \preceq t(T,b)$ for all $b\in A$. Moreover, if all the parameters from $t$ are in $A^\#$, then $T\in A^\#$ as well. Obviously, this construction can be generalized to more variables, either by adjusting the definition of $\sf{z}$ or by using the pairing combinators.

All this justifies the view of PCAs as generalizing computability on the natural numbers. In fact, we can code the natural numbers in a PCA $A$, by setting recursively $\overline{0} = \sf{i}$ and $\overline{n+1} = \sf{p}\bot \overline{n}$; observe that all the $\overline{n}$ are in $A^\#$. Usually, we will omit the bar and simply write $n\in A^\#$, where we really mean its representative $\overline{n}$. Now it is easily checked that the elements $\sf{zero} = \sf{p}_0$, $\sf{suc} = \lambda^\ast x. \sf{p}\bot x$ and $\sf{pred} = \lambda^\ast x. \sf{p}_0x\sf{i}(\sf{p}_1x)$ from $A^\#$ satisfy: $\sf{zero}\cdot 0 \leq \top$, $\sf{zero}\cdot (n+1)\leq \bot$, $\sf{suc}\cdot n\leq n+1$, $\sf{pred}\cdot 0 \leq 0$ and $\sf{pred}\cdot (n+1)\leq n$. Moreover, using the guarded fixpoint operator, we may construct a recursor $\sf{rec}\in A^\#$ such that
\[
\sf{rec}  a b 0\leq a\quad\mbox{and}\quad\sf{rec} a b(n+1) \preceq bn(\sf{rec} a b n)\mbox{ for all }n\in\mathbb{N}\mbox{ and }a,b\in A.
\]
Since we have a `pairing' function given by $\sf{p}\in A^\#$, we can also code longer tuples in $A$. More precisely, we can define total functions $j^n\colon A^n\to A$ for $n\geq 0$ by:
\begin{itemize}
\item	$j^0() = \sf{i}$;
\item	$j^{n+1}(a_0, \ldots, a_n) = \sf{p}a_0\cdot j^n(a_1, \ldots, a_n)$.
\end{itemize}
Using these functions, we can devise a \emph{coding of finite sequences in $A$}. If $a_0, \ldots, a_{n-1}$ is a sequence, then we define its code by:
\[
[a_0, \ldots, a_{n-1}]:= \sf{p}n\cdot j^n(a_0, \ldots, a_{n-1}).
\]
Observe that $[a_0, \ldots, a_{n-1}]$ is built using the $a_i$, combinators from $A^\#$ and application. In particular, if all the $a_i$ are from $A^\#$, then so is the code $[a_0, \ldots, a_{n-1}]$.

Using the combinators above, one can mimick the standard recursion theoretic arguments to show that all elementary operations on sequences are computable in terms of their codes. The following definition introduces a few such computations that we will need in the sequel.
\begin{defn}\label{defn:rij}
If $A$ is a PCA, then $\sf{lh}, \sf{read},  \sf{fst}, \sf{concat}, \sf{unit}, \sf{ext}\in A^\#$ are combinators that satisfy:
\begin{itemize}
\item	$\sf{lh}\cdot[a_0, \ldots, a_{n-1}] \leq n$;
\item	$\sf{read}\cdot [a_0, \ldots, a_{n-1}]\cdot i\leq a_i$ if $i<n$;
\item	$\sf{fst}\cdot [a_0, \ldots, a_n] \leq a_0$;
\item	$\sf{concat}\cdot [a_0, \ldots, a_{n-1}]\cdot [b_0, \ldots, b_{m-1}] \leq [a_0, \ldots, a_{n-1}, b_0, \ldots, b_{m-1}]$;
\item	$\sf{unit}\cdot a\leq [a]$;
\item	$\sf{ext}\cdot[a_0, \ldots, a_{n-1}]\cdot a'\leq [a_0, \ldots, a_{n-1}, a']$.
\end{itemize} 
\end{defn}

\begin{rem}\label{rem:ks}
Of course, the combinators constructed up to this point are far from unique. But all of them may be constructed using only the elements $\sf{k}$ and $\sf{s}$. When working with a PCA, we will assume that we have made an explicit choice for $\sf{k}$ and $\sf{s}$, and as a result, a choice for all the combinators mentioned above.
\end{rem}

\begin{ex}\label{ex:filter=PCA}
If $A$ is a PCA and $F$ is a filter on $A$, then $(A,F,\cdot,\leq)$ and $(F,A^\#,\cdot,\leq)$ are also PCAs, as can be seen by taking the same combinators $\sf{k}$ and $\sf{s}$.

The following instance of this example will be relevant in the coming sections. If $r$ is an element of $A$, then we define $F_r = \langle A^\#\cup\{r\}\rangle$, i.e., $F_r$ is the least filter on $A$ containing $r$. We denote the PCA $(A,F_r,\cdot,\leq)$ by $A[r]$.
\end{ex}

\begin{ex}\label{ex:DA=PCA}
If $A$ is a PCA, then so are $DA$ and $TA$. In both cases, a suitable choice of combinators is $\downset\{\sf{k}\}, \downset\{\sf{s}\}$.
\end{ex}

\begin{ex}\label{ex:K1}
The prototypical example of a (discrete, absolute) PCA is \emph{Kleene's first model} $\mathcal{K}_1$. Its underlying set is $\mathbb{N}$, and $m\cdot n$ is the result, if any, of applying the $m^\text{th}$ partial recursive function to $n$.
\end{ex}

\begin{defn}\label{defn:semitrivial}
A PCA $A$ is called \emph{semitrivial} if $\top,\bot\in A$ have a common lower bound.
\end{defn}

This notion is introduced for the following reason: many constructions in this paper use case distinctions inside a PCA $A$. Usually, such constructions do not work in a semitrivial PCA, since we cannot distinguish $\top$ and $\bot$. Observe that, if $u$ is a common lower bound of $\top$ and $\bot$, then $uab$ is a common lower bound of $a$ and $b$, for any $a,b\in A$. So in a semitrivial PCA, every two elements have a common lower bound. On the other hand, if $A$ is not semitrivial, then the numerals for any two distinct $m,n\in \mathbb{N}$ do not have a common lower bound. In particular, every non-semitrivial PCA is infinite.

In the next section, we will introduce the relevant morphisms between PCAs. Before we move to this section, we briefly describe two important categorical constructions on PCAs.

\begin{defn}\label{defn:asm}
Let $A$ be a PCA.
\begin{itemize}
\item[(i)]	An \emph{assembly} over $A$ is a pair $X = (|X|,E_X)$, where $|X|$ is a set and $E_X$ is a function $|X|\to TA$, i.e., $E_X(x)$ is a non-empty downwards closed subset of $A$, for all $x\in |X|$.
\item[(ii)]	A \emph{morphism of assemblies} $X\to Y$ is a function $f\colon |X|\to |Y|$ for which there exists a $t\in A^\#$ such that: for all $x\in |X|$, the set $t\cdot E_X(x)$ (as in \cref{defn:short}) is defined and a subset of $E_Y(f(x))$. Such a $t$ is called a \emph{tracker} for $f$.
\end{itemize}
\end{defn}

\begin{prop}
Assemblies over a PCA $A$ and morphisms between them form a category $\asm(A)$, and this is a quasitopos.
\end{prop}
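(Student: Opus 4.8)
The plan is to verify directly that $\asm(A)$ is a category and then to establish the quasitopos structure by exhibiting the required universal constructions. Checking that we have a category is routine: the identity function on $|X|$ is tracked by the combinator $\sf{i}\in A^\#$ (since $\sf{i}a\leq a$ for all $a$, we have $\sf{i}\cdot E_X(x)\subseteq E_X(x)$), and if $s$ tracks $f\colon X\to Y$ and $t$ tracks $g\colon Y\to Z$, then $\lambda^\ast x.\,t(sx)\in A^\#$ tracks $g\circ f$; here one uses combinatory completeness (\cref{prop:comb_comp}) together with the monotonicity axiom (A) to push the computation through downwards closed sets. Associativity and unit laws hold because composition of underlying functions is associative and unital.

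For the quasitopos structure, I would proceed through the standard list of constructions, in each case defining the object set-theoretically on underlying sets and then equipping it with an existence predicate built from the codings introduced above (pairing $\sf{p}$, booleans $\top,\bot$, sequence codes $[\,\cdots]$, the recursor, and the fixpoint operators). Finite limits: the terminal object is $(\{\ast\},\ast\mapsto\downset\{\sf{i}\})$; binary products have $|X\times Y|=|X|\times|Y|$ with $E_{X\times Y}(x,y)=\downset\{\sf{p}ab\mid a\in E_X(x),b\in E_Y(y)\}$, with projections tracked by $\sf{p}_0,\sf{p}_1$; equalizers are carried by the subset of the domain on which the two maps agree, with the existence predicate inherited. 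For exponentials $Y^X$, the underlying set is the set of morphisms $X\to Y$, and $E_{Y^X}(f)$ is the set of \emph{trackers} of $f$ that lie in $A^\#$ (downwards closed by (A)); evaluation is tracked by $\lambda^\ast x.\,\sf{p}_0x(\sf{p}_1x)$, and currying uses combinatory completeness. Strong subobjects are the \emph{regular} subobjects: a mono $m\colon X\mono Y$ is strong iff it is (isomorphic to) an inclusion $|X|\subseteq|Y|$ with $E_X(x)=E_Y(x)$; power objects (partial map classifiers, or rather the classifier of strong subobjects) are then obtained by taking, for an assembly $X$, the object whose underlying set is the set of strong subassemblies of $X$ and whose existence predicate records codes for membership tests. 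Finite colimits (initial object $(\emptyset,\emptyset)$, coproducts via $\top,\bot$-tagging, coequalizers via quotient sets with existence predicate $\bigcup$ over equivalence classes) complete the picture.

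The main obstacle, and the step I would spend the most care on, is the classifier of strong subobjects (equivalently, verifying that $\asm(A)$ is locally cartesian closed and has a classifier for regular subobjects), since this is precisely where the quasitopos axioms differ from the topos axioms and where the \emph{relative, ordered} nature of $A$ — the distinction between $A$ and $A^\#$, and the need to keep all existence predicates downwards closed — makes the bookkeeping delicate. In particular, one must check that the object $\Omega$ classifying strong monos can itself be given an existence predicate in $TA$ such that the characteristic map of a strong subobject is tracked by an element of $A^\#$; this uses that any strong subobject $X'\mono X$ can be presented so that $E_{X'}(x)$ is either $E_X(x)$ or handled uniformly, and that the two truth values $\top,\bot$ together with the strong case distinction $\sf{if}\ \_\ \sf{then}\ \_\ \sf{else}\ \_$ suffice to implement the pullback property. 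Rather than carry this out in full, I would remark that for discrete absolute PCAs this is classical (see, e.g., the treatment in \cite{jaap}) and that the arguments adapt verbatim once one systematically replaces Kleene equality by $\preceq$, insists on downwards closure of all existence predicates (guaranteed by axiom (A)), and draws combinators from $A^\#$ rather than from $A$; the upward closure axiom (C) for $A^\#$ is exactly what is needed to ensure trackers compose and remain `computable'.
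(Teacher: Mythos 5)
The paper does not give a proof of this proposition at all: it simply remarks that the argument is a straightforward generalization of Section 1.5 of \cite{jaap} and omits it. Your proposal follows exactly that standard route (category axioms via $\sf{i}$ and $\lambda^\ast x.t(sx)$, then limits, exponentials, a classifier for strong monos, and colimits, deferring the details to the discrete absolute case), so in approach you and the paper agree.

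There is, however, one concrete slip that would actually break a construction as you have written it. You define $E_{Y^X}(f)$ to be the set of trackers of $f$ \emph{that lie in $A^\#$}. This set is not downwards closed --- $A^\#$ is \emph{upwards} closed by axiom (C), so intersecting the (genuinely downwards closed) set of all trackers with $A^\#$ destroys the property you need for $E_{Y^X}(f)$ to land in $TA$ --- and, more seriously, currying fails with this definition: if $g\colon Z\times X\to Y$ is tracked by $t\in A^\#$ and $c\in E_Z(z)$ with $c\notin A^\#$, then $\lambda^\ast b.\,t(\sf{p}cb)$ is a tracker of $g(z,-)$ but need not lie in $A^\#$, so the transpose of $g$ would not be tracked. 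The correct definition takes $E_{Y^X}(f)$ to be the set of \emph{all} $a\in A$ with $a\cdot E_X(x)\denotes$ and $a\cdot E_X(x)\subseteq E_Y(f(x))$ for every $x$; this is downwards closed by axiom (A), and it is non-empty precisely because $f$, being a morphism, has a tracker in $A^\#\subseteq A$. In the relative setting the role of $A^\#$ is only to witness non-emptiness of existence predicates and to supply the trackers of morphisms, never to cut down the predicates themselves. A smaller point: the classifier of strong subobjects is just $\nabla\{0,1\}$, whose existence predicate is constantly $A$, so no ``codes for membership tests'' are needed --- the characteristic map of a regular subassembly is tracked by any element of $A^\#$ whatsoever. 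With these repairs your sketch is the intended one.
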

The proof is a straightforward generalization of the material in Section 1.5 from \cite{jaap}, and is omitted. There is an obvious forgetful functor $\Gamma\colon\asm(A)\to\set$ sending $X$ to $|X|$ and which is the identity on arrows. In the other direction, there is a functor $\nabla\colon \set\to\asm(A)$ given by $\nabla Y = (Y,\lambda y. A)$ and $\nabla f = f$. The functors $\Gamma$ and $\nabla$ are both regular, and we have $\Gamma\dashv \nabla$ with $\Gamma\nabla\cong\id_\set$.

Even though $\asm(A)$, being a quasitopos, enjoys nice properties, it is not an exact category. We can make it exact by taking the ex/reg completion, which turns out to be an elementary topos.

\begin{defn}\label{defn:rt}
Let $A$ be a PCA. The \emph{realizability topos} $\rt(A)$ is defined as $\asm(A)_\text{ex/reg}$.
\end{defn}

Since $\Gamma\colon\asm(A)\to\set$ is regular and $\set$ is exact, this functor may be lifted to a functor $\rt(A)\to \set$, which will also be denoted by $\Gamma$. In the other direction, we have the composition $\set\stackrel{\nabla}{\longrightarrow} \asm(A)\mono \rt(A)$, which will also be denoted by $\nabla$. This yields a geometric inclusion $\Gamma\dashv\nabla\colon \set\to \rt(A)$, which is equivalent to the inclusion of $\neg\neg$-sheaves of $\rt(A)$.


\section{Partial applicative morphisms}\label{sec:pam}

In this section, we introduce morphisms between PCAs. Usually, a morphism from a PCA $A$ to a PCA $B$ is a function that assigns to each $a\in A$ a non-empty subset of $B$. In the ordered setting, this needs to be amended to: a non-empty \emph{downset} of $B$. The non-emptiness condition is needed to make sure that every computation from $A$ can be `transferred' along the morphism to $B$. However, in the relative setting, computations in $A$ are given by elements from $A^\#$, rather than $A$. So one really needs to require the following: for each $a\in A^\#$, the associated downset of $B$ contains an element of $B^\#$. For $a$ outside $A^\#$, the non-emptiness conditions can be omitted, leading to the notion of a \emph{partial applicative morphism}.

\begin{defn}\label{defn:pam}
Let $A$ and $B$ be PCAs.
\begin{itemize}
\item[(i)]	A \emph{partial applicative morphism} $A\pf B$ is a function $f\colon A\to DB$ satisfying the following three requirements:
\begin{enumerate}
\item	$f(a)\cap B^\#\neq\emptyset$ for all $a\in A^\#$.
\item	There exists a $t\in B^\#$ such that: for all $a,a'\in A$, if $aa'$ is defined, then $t\cdot f(a)\cdot f(a')$ is defined as well, and a subset of $f(aa')$. Such a $t$ is called a \emph{tracker} for $f$.
\item	There exists a $u\in B^\#$ such that: $u\cdot f(a)$ is defined for all $a\in A$, and $u\cdot f(a)\subseteq f(a')$ whenever $a\leq a'$. We say that $f$ preserves the order up to $u$.
\end{enumerate}
We say that $f$ is \emph{total} if $f(a)\neq\emptyset$ for all $a\in A$, and in this case, we write $f\colon A\to B$.
\item[(ii)]	If $f,f'\colon A\to DB$ are functions, then we say that $f\leq f'$ if there exists an $s\in B^\#$ such that: for all $a\in A$, the set $s\cdot f(a)$ is defined and a subset of $f'(a)$. Such an $s$ is said to \emph{realize} the inequality $f\leq f'$. Moreover, we write $f\simeq f'$ if both $f\leq f'$ and $f'\leq f$.
\end{itemize}
\end{defn}

\begin{prop}\label{prop:pPCA}
PCAs, partial applicative morphisms and inequalities between them form a preorder-enriched category $\sf{pPCA}$. Restricting the 1-cells to \emph{total} applicative morphisms yields another preorder-enriched category $\sf{PCA}$.
\end{prop}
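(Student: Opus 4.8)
The plan is to check the axioms of a preorder-enriched category directly; the verification runs parallel to the classical (discrete, absolute) treatment in \cite{jaap}, and the new \emph{partiality} of morphisms causes no trouble because the non-emptiness clause (1) of \cref{defn:pam} is imposed exactly where it is needed, namely on $A^\#$.

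For the identities, I would take $\id_A\colon A\pf A$ to be $a\mapsto\downset\{a\}$. Clause (1) holds since $a\in\downset\{a\}$ for $a\in A^\#$; a tracker is $\lambda^\ast xy.xy$, using axiom (A) of \cref{defn:PAP} to see that for $a''\le a$, $a'''\le a'$ with $aa'\denotes$ one has $(\lambda^\ast xy.xy)a''a'''\preceq a''a'''\preceq aa'$; and $\sf{i}$ realizes order-preservation because $\sf{i}a''\le a''\le a\le a'$. Clearly $\id_A$ is total.

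For composition, given $f\colon A\pf B$ and $g\colon B\pf C$ I would extend $g$ to $\overline g\colon DB\to DC$ by $\overline g(\beta)=\bigcup_{b\in\beta}g(b)$ — a downset, being a union of downsets — and set $g\circ f:=\overline g\circ f$, so that $(g\circ f)(a)=\bigcup_{b\in f(a)}g(b)$. The technical heart of the proof is the lemma that a tracker $t_g\in C^\#$ for $g$ makes $\overline g$ preserve application up to inclusion: if $\beta\beta'\denotes$ in $DB$, then $t_g\cdot\overline g(\beta)\cdot\overline g(\beta')\denotes$ and lies in $\overline g(\beta\beta')$; this is proved by unwinding the definition of application in $DB$ (\cref{ex:DA}) and applying the tracker property of $g$ pointwise. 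Granting this, and using the standard device of replacing a tracker $t$ by $\lambda^\ast xy.txy$ so that it is defined on all elements, I would choose $c_{t_f}\in g(t_f)\cap C^\#$ (possible since $t_f\in B^\#$) and verify that $\lambda^\ast xy.\,t_g(t_g c_{t_f}x)y$ is a tracker for $g\circ f$; similarly $\lambda^\ast x.\,t_g c_{u_f}x$, with $c_{u_f}\in g(u_f)\cap C^\#$, realizes order-preservation for $g\circ f$. Clause (1) for $g\circ f$ follows by chaining non-emptiness first for $f$ and then for $g$. This construction, together with the bookkeeping of definedness side conditions, is where I expect the only real obstacle to lie.

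What remains is routine. The relation $\le$ on each hom-preorder is reflexive (via $\sf{i}$) and transitive (from $r$ realizing $f\le f'$ and $r'$ realizing $f'\le f''$, the element $\lambda^\ast x.r'(rx)$ realizes $f\le f''$), and composition is monotone in both arguments: for $g\le g'$ one pushes the realizer through the outer union, and for $f\le f'$ via $r$ one uses $t_g$ and a constant $c_r\in g(r)\cap C^\#$ to move $g(b)$ into $g(rb)$ for $b\in f(a)$, using $rb\in f'(a)$. With the union-based composition, associativity and the left unit law $\id_B\circ f=f$ hold on the nose (the latter because $f(a)$ is already downward closed), while the right unit law holds up to $\simeq$: $(f\circ\id_A)(a)=\bigcup_{a'\le a}f(a')$ contains $f(a)$, and the reverse inequality is realized by the order-preservation realizer $u_f$ of $f$ — which is the appropriate degree of strictness for a preorder-enriched structure (equivalently, one passes to the posetal reflections of the hom-preorders). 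Finally, $\sf{PCA}$ is obtained by noting that $\id_A$ is total and that $\bigcup_{b\in f(a)}g(b)$ is non-empty whenever $f(a)$ is non-empty and $g$ is total, so total applicative morphisms contain the identities and are closed under composition.
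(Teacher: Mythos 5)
Your proposal is correct and takes essentially the same route as the paper: the identity $\id_A(a)=\downset\{a\}$, the composition $(gf)(a)=\bigcup_{b\in f(a)}g(b)$, and the observation that only the unit law involving precomposition with $\id_A$ fails to hold strictly (holding instead up to $\simeq$, via \cref{lem:on_the_nose}) all match the paper, which otherwise defers the verification to Section 1.5 of \cite{jaap}. You have simply filled in the tracker constructions and monotonicity checks that the paper leaves to the reader, and these are all sound.
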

\begin{proof}
The identity on a PCA $A$ is given by $\id_A(a) = \downset\{a\}$. Moreover, if $A\stackrel{f}{\pf}B\stackrel{g}{\pf} C$, then their composition $gf$ is defined by $gf(a) = \bigcup_{b\in f(a)} g(b)$. The proof that this yields a preorder-enriched category proceeds as in Section 1.5 of \cite{jaap}, with appropriate adjustments. We do mention that $\sf{pPCA}$ is not a \emph{strict} preorder-enriched category, since $\id_Af\simeq f$ in general holds only up to isomorphism (see also \cref{lem:on_the_nose} below). The other equations for a category do hold strictly.

For the second statement, observe that identities are total, and that total morphisms are closed under composition.
\end{proof}

The reason why we call our applicative morphisms `partial' is that we view the fact that $f(a)$ is non-empty as evidence that $f(a)$ is `actually defined'. Moreover, an element of $f(a)\cap B^\#$ counts as \emph{effective} evidence that $f(a)$ is defined. This motivates the following definition.

\begin{defn}\label{defn:dom}
Let $f\colon A\pf B$ be a partial applicative morphism. Then we write
\[
\dom f = \{a\in A\mid f(a)\neq\emptyset\}\quad\mbox{and}\quad \dom^\# f = \{a\in A\mid f(a)\cap B^\#\neq\emptyset\}.
\]
\end{defn}
We see that $f$ is total precisely when $\dom f = A$. Since $f\leq f'$ implies that $\dom f\subseteq \dom f'$, this also means that $\sf{PCA}(A,B)$ is upwards closed inside $\sf{pPCA}(A,B)$. Moreover, we have the following result.

\begin{lem}\label{lem:dom=filter}
Let $f\colon A\pf B$ be a partial applicative morphisms. Then $\dom f$ and $\dom^\# f$ are filters of $A$.
\end{lem}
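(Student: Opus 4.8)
The plan is to verify directly that $\dom f$ and $\dom^\# f$ satisfy the two defining properties of a filter (\cref{defn:filter}): closure under application and upwards closure. For $\dom^\# f$ we additionally need $A^\#\subseteq \dom^\# f$, which is immediate from requirement (1) in \cref{defn:pam}: if $a\in A^\#$ then $f(a)\cap B^\#\neq\emptyset$, so $a\in\dom^\# f$. In particular this also shows $\dom^\# f$ is non-empty, and since $\dom^\# f\subseteq\dom f$ we get that $\dom f$ is non-empty as well. (In fact $A^\#\subseteq\dom f$ too, so both are filters \emph{on} $A$ in the sense relevant to a PAS.)

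For closure under application, suppose $a,a'\in\dom f$ with $aa'\denotes$; I want $aa'\in\dom f$. Pick $b\in f(a)$ and $b'\in f(a')$, which exist by assumption, and let $t\in B^\#$ be a tracker for $f$. By requirement (2), since $aa'\denotes$, the expression $t\cdot f(a)\cdot f(a')$ is defined and contained in $f(aa')$; in particular $tbb'\denotes$ and $tbb'\in f(aa')$, so $f(aa')\neq\emptyset$, i.e. $aa'\in\dom f$. For $\dom^\# f$ the same argument works but now choose $b\in f(a)\cap B^\#$ and $b'\in f(a')\cap B^\#$; since $t\in B^\#$ and $B^\#$ is closed under application, $tbb'\in B^\#$, and it lies in $f(aa')$, so $f(aa')\cap B^\#\neq\emptyset$.

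For upwards closure, suppose $a\leq a'$ (in $A$) and $a\in\dom f$; I want $a'\in\dom f$. Wait — I should be careful about the direction: requirement (3) says $u\cdot f(a)\subseteq f(a')$ \emph{whenever $a\leq a'$}, so information flows from the more-refined $a$ to the less-refined $a'$. Thus if $a\in\dom f$, pick $b\in f(a)$; then $u\cdot f(a)$ is defined, so $ub\denotes$ and $ub\in f(a')$, giving $a'\in\dom f$. (So the filter is upwards closed in the order on $A$, consistent with \cref{defn:filter}(ii), since in a PAS a filter must be upwards closed and the numerals-style reasoning in the paper treats $a\leq a'$ as "$a$ more informative than $a'$".) Again, for $\dom^\# f$, choose $b\in f(a)\cap B^\#$; since $u\in B^\#$, $ub\in B^\#$ as well, and $ub\in f(a')$.

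I do not anticipate any genuine obstacle here: the proof is a direct unwinding of the three requirements on a partial applicative morphism, each of which was essentially designed to make exactly one of these closure properties go through. The only point requiring a moment's attention is matching the direction of the order (which element is "more informative") against the statement of requirement (3) and the definition of a filter, and noting that the presence of a tracker (resp. order-realizer) \emph{in $B^\#$} is precisely what upgrades the arguments from $\dom f$ to $\dom^\# f$ via closure of $B^\#$ under application.
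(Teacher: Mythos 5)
Your proof is correct and follows essentially the same route as the paper's: establish $A^\#\subseteq\dom^\#f\subseteq\dom f$ from requirement (1), use the tracker $t\in B^\#$ for closure under application, and the order-realizer $u\in B^\#$ for upwards closure, with membership in $B^\#$ of $tbb'$ resp.\@ $ub$ upgrading the argument to $\dom^\#f$. The only difference is that you spell out the upwards-closure step, which the paper leaves as ``proceeds similarly''.
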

\begin{proof}
By property 1 of $f$, we have $A^\#\subseteq \dom^\# f\subseteq \dom f$. If $t\in B^\#$ tracks $f$ and $a,a'\in A$ are such that $aa'\denotes$, then $t\cdot f(a)\cdot f(a')\subseteq f(aa')$. So, if $b\in f(a)$ and $b'\in f(a')$, then $tbb'\in f(aa')$, which shows that $\dom f$ is closed under application. Similarly, if $b\in f(a)\cap B^\#$ and $b'\in f(a')\cap B^\#$, then $tbb'\in f(aa')\cap B^\#$, so $\dom^\# f$ is closed under application as well. The proof that $\dom f$ and $\dom^\# f$ are upwards closed proceeds similarly, using property 3 of $f$.
\end{proof}

\begin{ex}\label{ex:iota_r}
Let $A$ be a PCA, let $r\in A$ and recall the PCA $A[r]$ from \cref{ex:filter=PCA}. Then there exists a total applicative morphism $\iota_r\colon A\to A[r]$ that `acts as the identity', i.e., $\iota_r(a) = \downset\{a\}$. This morphism has the following universal property: if $f\colon A\pf B$, then $f$ factors, up to isomorphism, through $\iota_r$ if and only if $r\in \dom^\# f$. Indeed, $f$ is also a partial applicative morphism $A[r]\pf B$ if and only if $F_r\subseteq \dom^\# f$; and since $\dom^\# f$ is a filter on $A$, this is true if and only if $r\in \dom^\# f$.
\end{ex}

When working with partial applicative morphisms, the following lemma (\cite{hofstrajaap}, Lemma 3.3) is often useful. It says that every partial applicative morphism is isomorphic to one that preserves the order `on the nose'.
\begin{lem}\label{lem:on_the_nose}
Every partial applicative morphism $f\colon A\pf B$ is isomorphic to an order-preserving partial applicative morphism $f'$, meaning that $a\leq a'$ implies $f'(a)\subseteq f'(a')$.
\end{lem}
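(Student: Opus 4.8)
The plan is to take $f'$ to be the downward closure of $f$ along the order on $A$. Writing $u\in B^\#$ for the combinator witnessing that $f$ preserves the order (property 3 of \cref{defn:pam}), I would set
\[
f'(a) \;=\; \bigcup_{a_0\leq a}\, f(a_0)\qquad (a\in A).
\]
Each $f(a_0)$ is a downset of $B$ and a union of downsets is again a downset, so $f'$ is a well-defined function $A\to DB$, and it is order-preserving on the nose because $a\leq a'$ forces $\{a_0 \mid a_0\leq a\}\subseteq\{a_0 \mid a_0\leq a'\}$, hence $f'(a)\subseteq f'(a')$. Two inclusions will do most of the work: $f(a)\subseteq f'(a)$ for every $a$ (take $a_0 = a$), and conversely $u\cdot f'(a)\subseteq f(a)$, since $a_0\leq a$ implies $u\cdot f(a_0)\subseteq f(a)$ by property 3 of $f$ (and $u\cdot f'(a)$ is defined since $u\cdot f(a_0)$ is).

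From these inclusions the bulk of the verification follows quickly. For $f'\simeq f$: the combinator $\sf{i}$ realizes $f\leq f'$ (as $\sf{i}b\leq b\in f(a)\subseteq f'(a)$ and $f'(a)$ is downward closed), while $u$ realizes $f'\leq f$ (by $u\cdot f'(a)\subseteq f(a)$). That $f'$ is again a partial applicative morphism is checked against \cref{defn:pam}: property 1 holds because $f(a)\cap B^\#\subseteq f'(a)\cap B^\#$, so $f'(a)\cap B^\#\neq\emptyset$ for $a\in A^\#$; and property 3 holds on the nose, realized by $\sf{i}$.

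The one point that needs a little care is property 2, the existence of a tracker for $f'$. Let $t\in B^\#$ be a tracker for $f$, and put $t'' := \lambda^\ast xy.txy\in B^\#$, so that $t''b\denotes$ for all $b$ and $t''bb'\preceq tbb'$; this currying is exactly what makes the relevant applications in $DB$ (in the sense of \cref{ex:DA}) defined. Now suppose $aa'\denotes$, $b\in f'(a)$ and $b'\in f'(a')$. Choose $a_0\leq a$ and $a_0'\leq a'$ with $b\in f(a_0)$ and $b'\in f(a_0')$; by axiom (A), $a_0 a_0'\denotes$ and $a_0 a_0'\leq aa'$. Since $t$ tracks $f$, $tbb'\in f(a_0 a_0')\subseteq f'(aa')$, and hence $t''bb'\in f'(aa')$ by downward closure. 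This shows $t''$ is a tracker for $f'$ and finishes the argument. I do not anticipate any real obstacle; the only bookkeeping that is easy to get wrong is, as indicated, the definedness of the iterated applications $t''\cdot f'(a)\cdot f'(a')$ in $DB$, which is precisely why one replaces $t$ by its curried form $t''$.
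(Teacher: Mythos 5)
Your construction is exactly the paper's: the paper sets $f'=f\id_A$, i.e.\ $f'(a)=\bigcup_{a_0\leq a}f(a_0)$, and then simply cites \cref{prop:pPCA} for the facts that $f'$ is a partial applicative morphism and $f'\simeq f$, whereas you verify these by hand (correctly, including the definedness bookkeeping for the tracker via currying). So the proposal is correct and takes essentially the same route, just with the composition-with-identity argument unfolded.
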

\begin{proof}
Define $f'$ as $f\id_A$, i.e., $f'(a) = \bigcup_{a'\leq a} f(a')$ for $a\in A$. \cref{prop:pPCA} tells us that $f'$ is a partial applicative morphism which is isomorphic to $f$. Moreover, it is immediately clear that $f'$ preserves the order on the nose.
\end{proof}

\begin{rem}
Ordered PCAs were introduced in the paper \cite{hofstrajaap}. Here, the authors consider a category whose objects are ordered (but absolute) PCAs, but whose arrows are \emph{functions} $A\to B$. They show that the assignment $A\mapsto TA$ is part of a monad structure on this category, whose Kleisli category is (the absolute version of) $\sf{PCA}$. For $\sf{pPCA}$, a similar treatment can be given using the construction $A\mapsto DA$ instead. In the absolute setting, the PCA $DA$ is not very interesting. Indeed, because $DA$ has a least element $\emptyset$, it is equivalent to the one-element PCA. In the relative setting, on the other hand, this is no longer true, due to the fact that the least element $\emptyset$ of $DA$ is never in $(DA)^\#$.
\end{rem}

We now proceed to extend the constructions $\asm$ and $\rt$ to partial applicative morphisms.
\begin{defn}\label{defn:Asm(f)}
Let $f\colon A\pf B$ be a partial applicative morphism. We define the functor $\asm(f)\colon \asm(A)\to \asm(B)$ by:
\begin{itemize}
\item	$|\asm(f)(X)| = \{x\in|X|\mid E_X(x)\cap\dom f\neq \emptyset\}$;
\item	$E_{\asm(f)(X)}(x) = \bigcup_{a\in E_X(x)} f(a)$ for $x\in |\asm(f)(X)|$;
\item	$\asm(f)(g)$ is the restriction of $g$ to $|\asm(f)(X)|$,
\end{itemize}
for assemblies $X\in\asm(A)$ and arrows $g\colon X\to Y$ of $\asm(A)$.
\end{defn}

Since we are working with multiple assemblies simultaneously, we will sometimes write $\Gamma_A\dashv \nabla_A$ for the adjunction between $\set$ and $\asm(A)$ or $\rt(A)$. A functor $F\colon \asm(A)\to \asm(B)$ is called a $\Gamma$-functor if $\Gamma_B F\cong \Gamma_A$, and similarly, a $\nabla$-functor if $F\nabla_A\cong\nabla_B$. For functors $\rt(A)\to \rt(B)$, we adopt a similar definition. Now we can state some elementary properties of the functor $\asm(f)$.

\begin{prop}
For every partial applicative morphism $f$, the functor $\asm(f)$ is a regular $\nabla$-functor. Moreover, it is a $\Gamma$-functor precisely when $f$ is total.
\end{prop}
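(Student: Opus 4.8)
The plan is to verify the three clauses — regularity, being a $\nabla$-functor, and being a $\Gamma$-functor iff $f$ is total — essentially by unwinding the definitions of $\asm(f)$, $\nabla$, and $\Gamma$, and then checking the required (iso)morphisms are tracked. I would first observe that $\asm(f)$ is well-defined: given $X\in\asm(A)$, each $E_{\asm(f)(X)}(x) = \bigcup_{a\in E_X(x)} f(a)$ is a nonempty (since $x\in|\asm(f)(X)|$ forces some $a\in E_X(x)\cap\dom f$) downset of $B$, so it lands in $TB$; and for an arrow $g\colon X\to Y$ with tracker $t\in A^\#$, one uses property~1 of $f$ to pick $b_t\in f(t)\cap B^\#$, a tracker $s\in B^\#$ for $f$, and checks via axiom~(A) and property~3 that a suitable element built from $s$, $b_t$, and an order-realizer tracks the restriction $\asm(f)(g)$; in particular $g$ does map $|\asm(f)(X)|$ into $|\asm(f)(Y)|$ because $b_t\cdot f(a)\subseteq f(ta)\subseteq E_Y(g(x))$, so $\dom f$ meets $E_Y(g(x))$. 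Functoriality is then immediate since $\asm(f)(g)$ is just a restriction of $g$.

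For the $\nabla$-functor claim, I would compute $\asm(f)(\nabla_A Y)$ directly: $E_{\nabla_A Y}(y) = A$ for all $y$, so $|\asm(f)(\nabla_A Y)| = \{y\mid A\cap\dom f\neq\emptyset\} = Y$ (as $\dom f$ is a nonempty filter, hence meets $A$), and $E_{\asm(f)(\nabla_A Y)}(y) = \bigcup_{a\in A} f(a) = \downset\{b\in B \mid b\in f(a)\text{ for some }a\}$. This need not literally equal $B$, but it is a fixed nonempty downset $D\subseteq B$ independent of $y$, so the identity function $Y\to Y$ is an iso $\asm(f)(\nabla_A Y)\to\nabla_B Y = (Y,\lambda y.B)$: in one direction any $b_0\in D\cap B^\#$ (which exists by property~1, taking $a\in A^\#$) together with $\kbar\in B^\#$ gives a tracker $\lambda^\ast x.\kbar x b_0 \in B^\#$ mapping $D$ into... wait — more simply, $\kbar b_0 \cdot E_{\asm(f)(\nabla_A Y)}(y) \subseteq B$ trivially and lands in the (total) fibre, while in the other direction one needs $\downset\{b_0\}\subseteq D$, which holds since $b_0\in D$ and $D$ is a downset; so $\kbar b_0$ (or $\lambda^\ast x.b_0$) works. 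Naturality in $Y$ and in $f$-components is automatic because everything is the identity on underlying sets. Hence $\asm(f)\nabla_A\cong\nabla_B$.

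For regularity, I would invoke the standard criterion: a functor between (quasi)toposes of the form $\asm(-)$ is regular as soon as it preserves finite limits and regular epis, and I would check these pointwise. Finite limits in $\asm(A)$ are computed as in $\set$ on underlying sets with the evident pairing/pullback existence predicates built from $\sf{p},\sf{p}_0,\sf{p}_1$; since $\asm(f)$ changes underlying sets only by passing to the sub-object of elements whose predicate meets $\dom f$, and $\dom f$ is a filter closed under the relevant combinators, one checks $|\asm(f)(X\times_Z Y)| = |\asm(f)(X)|\times_{|\asm(f)(Z)|}|\asm(f)(Y)|$ and that the existence predicates match up to a tracked iso, using a tracker for $f$ and property~1 to realize $\sf{p}$; the terminal object is handled by the $\nabla$-computation above with $Y = 1$. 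Regular epis in $\asm(A)$ are the morphisms surjective on underlying sets with a "reflecting" tracker, and a short argument using $\dom f\supseteq A^\#$ shows $\asm(f)$ of such is again surjective-with-tracker on the restricted underlying sets. Finally, for the $\Gamma$-clause: $\Gamma_B\asm(f)(X) = |\asm(f)(X)| = \{x\in|X|\mid E_X(x)\cap\dom f\neq\emptyset\}$, and this equals $\Gamma_A X = |X|$ for all $X$ precisely when $\dom f$ meets every nonempty downset of $A$, i.e.\ (taking principal downsets) precisely when $\dom f = A$, i.e.\ $f$ total; for the converse, if $f$ is total the identity maps exhibit $\Gamma_B\asm(f)\cong\Gamma_A$. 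I expect the main obstacle to be the regularity verification — specifically, checking that $\asm(f)$ preserves regular epis and pullbacks simultaneously, since this requires being careful that restricting underlying sets to the $\dom f$-meeting part is compatible with the $\set$-level (co)limit computation and that all the bookkeeping combinators ($\sf{p}$, $\sf{p}_0$, $\sf{p}_1$, the case operator) used in $\asm(A)$ survive transport along a tracker of $f$; everything else is a direct unwinding.
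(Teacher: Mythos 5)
The paper states this proposition without proof (the following remark only cites Longley's thesis for the discrete, absolute case), and your argument is exactly the routine direct verification that is being left implicit: unwind the definitions, use that $\dom f$ is a filter containing $A^\#$ for the limit/underlying-set bookkeeping, and test $\Gamma$-ness on principal downsets. It is essentially correct; the only slips are notational and harmless: the tracker inclusion should read $sb_t\cdot f(a)\subseteq f(ta)$ rather than $b_t\cdot f(a)\subseteq f(ta)$ (with $s$ the tracker of $f$), and in the $\nabla$-computation the element realizing $\nabla_B Y\to \asm(f)(\nabla_A Y)$ must be $\sf{k}b_0=\lambda^\ast x.b_0$ (which you do offer as the alternative), not $\kbar b_0$, since $\kbar b_0 b\leq b$ lands in $B$ rather than in $D$.
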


By the universal property of the ex/reg completion, there is a unique regular functor $\rt(A)\to\rt(B)$ whose restriction to assemblies is $\asm(f)$. We will denote this functor by $\rt(f)$; this is also a regular $\nabla$-functor. Moreover, $\rt(f)$ is a $\Gamma$-functor iff $\asm(f)$ is a $\Gamma$-functor, iff $f$ is total.

\begin{rem}
Longley has shown (\cite{longleyphd}, Sections 2.2 and 2.3), for the discrete, absolute case, that a left exact functor $F\colon \asm(A)\to \asm(B)$ is a $\nabla$-functor if and only if it is a $\Gamma$-functor. Morover, the regular $\nabla$-functors are (up to equivalence) precisely the functors of the form $\asm(f)$. This can be generalized to the ordered case, but the relative case is slightly different. Indeed, while $\Gamma$-functors are still always $\nabla$-functors, the converse is not true. In the relative case, the regular $\nabla$-functors $\asm(A)\to\asm(B)$ correspond to the partial applicative morphisms $A\pf B$, while the regular $\Gamma$-functors $\asm(A)\to\asm(B)$ correspond to \emph{total} applicative morphisms $A\to B$.
\end{rem}

In order to complete the picture, we remark that, if $f'\leq f$, we get a natural transformation $\mu\colon \asm(f)\To\asm(f')$, where $\mu_X$ is the inclusion $|\asm(f)(X)|\subseteq |\asm(f')(X)|$ for assemblies $X$. By the universal property of the ex/reg completion, we also get a natural transformation $\rt(f)\To\rt(f')$. This makes $\asm$ and $\rt$ into pseudofunctors from $\sf{pPCA}$ to the 2-category of categories.

Now we discuss some special properties that a partial applicative morphism may have.
\begin{defn}\label{defn:pam_properties}
Let $f\colon A\pf B$ be a partial applicative morphism. Then $f$ is called:
\begin{itemize}
\item[(i)]	\emph{decidable} if there is a $d\in B^\#$ (called a \emph{decider} for $f$) such that $d\cdot f(\top)\subseteq \downset\{\top\}$ and $d\cdot f(\bot)\subseteq \downset\{\bot\}$;
\item[(ii)]	\emph{computationally dense} (abbreviated c.d.) is there is an $n\in B^\#$ such that:
\begin{align}\label{eq:cd}\tag{cd}
\forall s\in B^\#\s \exists r\in A^\#\s (n\cdot f(r)\subseteq \downset\{s\});
\end{align}
\item[(iii)]	\emph{single-valued} if $f(a)$ is a principal downset of $B$ for each $a\in \dom f$;
\item[(iv)]		\emph{projective} if $f\simeq f'$ for some single-valued $f'\colon A\pf B$.
\end{itemize}
\end{defn}
We list some elementary results on decidability and computational density. The proofs are left to the reader.
\begin{prop}\label{prop:properties_properties}
Let $A\stackrel{f}{\pf} B\stackrel{g}{\pf} C$ be partial applicative morphisms.
\begin{itemize}
\item[\textup{(}i\textup{)}]	if $f$ is c.d., then $f$ is decidable;
\item[\textup{(}ii\textup{)}]	if $f'\colon A\pf B$ satisfies $f'\leq f$ and $f$ is c.d.\@ (resp.\@ decidable), then $f'$ is c.d.\@ (resp.\@ decidable) as well;
\item[\textup{(}iii\textup{)}]	if $f$ and $g$ are c.d.\@ (resp.\@ decidable), then $gf$ is c.d.\@ (resp.\@ decidable) as well;
\item[\textup{(}iv\textup{)}]	if $gf$ is c.d.\@ (resp.\@ decidable), then $g$ is c.d.\@ (resp.\@ decidable) as well.
\end{itemize}
In particular, left adjoints are c.d.\@ and decidable.
\end{prop}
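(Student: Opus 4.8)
The plan is to prove all of \cref{prop:properties_properties} by the same recipe: use combinatory completeness (\cref{prop:comb_comp}) to cook up the required combinator in the target PCA, and then verify the defining inclusion by a routine chase through downsets using the trackers of the morphisms at hand. I would begin with part (ii), which is the easiest and also provides useful scaffolding. If $s\in B^\#$ realizes $f'\le f$, then $s$ sends $f'(a)$ into $f(a)$ for every $a$, so precomposing with $s$ turns a witness for $f$ into one for $f'$: a decider $d$ for $f$ yields the decider $\lambda^\ast y.\,d(sy)$ for $f'$, and a c.d.-witness $n$ for $f$ yields the c.d.-witness $\lambda^\ast y.\,n(sy)$ for $f'$ (with the same $r$'s). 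Since $f\simeq f'$ means $f\le f'$ and $f'\le f$, part (ii) shows in particular that decidability and computational density are invariant under $\simeq$; hence, via \cref{lem:on_the_nose}, I may assume from now on that every morphism in sight preserves the order on the nose, which removes the order-preservation combinators from all the remaining computations.

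For part (i): instantiating \cref{eq:cd} at $s:=\top$ and at $s:=\bot$ yields $r_\top,r_\bot\in A^\#$ with $n\cdot f(r_\top)\subseteq\downset\{\top\}$ and $n\cdot f(r_\bot)\subseteq\downset\{\bot\}$. The one idea needed is to transport the booleans of $A$ to these witnesses by a case operator \emph{inside $A$}: the element $c:=\lambda^\ast x.\,\sf{C}x\,r_\top\,r_\bot\in A^\#$ satisfies $c\top\le r_\top$ and $c\bot\le r_\bot$. Choosing $c'\in f(c)\cap B^\#$ and a tracker $t$ for $f$, the combinator $d:=\lambda^\ast y.\,n(tc'y)\in B^\#$ is a decider for $f$: for $b\in f(\top)$ we have $tc'b\in f(c\top)\subseteq f(r_\top)$, so $db\in n\cdot f(r_\top)\subseteq\downset\{\top\}$, and symmetrically for $\bot$ — crucially a single $c$, hence a single $d$, handles both booleans. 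For part (iii) I would feed a witness for $f$ through $g$. In the c.d.\ case, given $s\in C^\#$, I first use c.d.\ of $g$ to get $r_B\in B^\#$ with $n_g\cdot g(r_B)\subseteq\downset\{s\}$, then c.d.\ of $f$ instantiated at $r_B$ to get $r_A\in A^\#$ with $n_f\cdot f(r_A)\subseteq\downset\{r_B\}$; picking $n_f'\in g(n_f)\cap C^\#$ and a tracker $t$ for $g$, the combinator $m:=\lambda^\ast z.\,n_g(tn_f'z)\in C^\#$ works, since any $w\in gf(r_A)$ lies in $g(b)$ for some $b\in f(r_A)$, whence $n_fb\le r_B$, $tn_f'w\in g(n_fb)\subseteq g(r_B)$, and $mw\in n_g\cdot g(r_B)\subseteq\downset\{s\}$; the decidable case of (iii) is the identical argument with $\top,\bot$ in place of the $r$'s and deciders in place of the $n$'s.

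For part (iv), the c.d.\ case is essentially free: if $m$ witnesses c.d.\ of $gf$, then $m$ already witnesses c.d.\ of $g$, because for $s\in C^\#$ we pick $r_A\in A^\#$ with $m\cdot gf(r_A)\subseteq\downset\{s\}$ and then any $r_B\in f(r_A)\cap B^\#$ satisfies $g(r_B)\subseteq gf(r_A)$. The decidable case of (iv) is the one genuinely requiring an idea, and is where I expect the main obstacle: from a decider $d$ of $gf$ I must produce one for $g$, but there is no reason for $f(\top)$ to contain $\top$ or for $f(\bot)$ to contain $\bot$, so I cannot simply restrict $d$. The fix is, dually to part (i), a case operator \emph{inside $B$}: choose $\beta\in f(\top)\cap B^\#$ and $\gamma\in f(\bot)\cap B^\#$, set $e:=\lambda^\ast x.\,\sf{C}x\,\beta\,\gamma\in B^\#$ (so $e\top\le\beta$ and $e\bot\le\gamma$), pick $e'\in g(e)\cap C^\#$ and a tracker $t$ for $g$, and check that $d':=\lambda^\ast z.\,d(te'z)\in C^\#$ decides $g$: for $w\in g(\top)$ we get $te'w\in g(e\top)\subseteq g(\beta)\subseteq gf(\top)$, hence $d'w\in d\cdot gf(\top)\subseteq\downset\{\top\}$, and symmetrically for $\bot$.

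Finally, the closing assertion follows by assembly. One checks directly that $\id_B$ is c.d.\ (take $n:=\sf{i}$, and $r:=s$ for each $s\in B^\#$) and decidable ($d:=\sf{i}$). If $f\colon A\pf B$ is a left adjoint with right adjoint $g\colon B\pf A$, then the adjunction in $\sf{pPCA}$ provides in particular the counit inequality $fg\le\id_B$; by part (ii) this makes $fg$ c.d.\ and decidable, and then part (iv) applied to the composite $fg=f\circ g$ (with $g\colon B\pf A$ as the first factor and $f\colon A\pf B$ as the second) shows that $f$ itself is c.d.\ and decidable.
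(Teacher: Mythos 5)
Your proof is correct, and since the paper explicitly leaves these verifications to the reader, there is no official proof to diverge from: the arguments you give (the case operator in $A$ combining the two c.d.\@ witnesses for part (i), its dual in $B$ for the decidable half of (iv), the reduction of everything else to routine tracker chases after invoking \cref{lem:on_the_nose}, and deriving the final claim from the counit $fg\leq\id_B$ via (ii) and (iv)) are exactly the standard ones intended here. No gaps.
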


Computational density was first introduced in the paper \cite{hofstrajaap}. The definition we gave above is not the original definition from that paper, but rather a simplification due to P.~Johnstone (see \cite{johnstone}, Lemma 3.2). The main motivation for computational density is the following result, that we state but do not prove.

\begin{thm}\label{thm:cd=rightadjoint}
Let $f$ be a \emph{total} applicative morphism. Then:
\begin{itemize}
\item[\textup{(}i\textup{)}]	$f$ is projective and c.d.\@ iff $f$ has a right adjoint in $\sf{pPCA}$;
\item[\textup{(}ii\textup{)}]	$f$ is c.d.\@ iff $\asm(f)$ has a right adjoint, iff $\rt(f)$ has a right adjoint.
\end{itemize}
In particular, every total c.d.\@ applicative morphism $A\to B$ gives rise to geometric morphism $\rt(B)\to \rt(A)$.
\end{thm}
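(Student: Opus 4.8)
The plan is to prove part~(ii) first. Part~(i) then follows by combining it with the correspondence, discussed above, between partial applicative morphisms and regular $\nabla$-functors; and the final clause of the theorem is immediate, since $\rt(f)$ is regular and hence left exact, so that $\rt(f)\dashv\rt(f)_\ast$ is a geometric morphism $\rt(B)\to\rt(A)$ as soon as the right adjoint $\rt(f)_\ast$ exists. Furthermore, $\rt(f)$ has a right adjoint as soon as $\asm(f)$ does, by the universal property of the ex/reg completion, and conversely a right adjoint of $\rt(f)$ preserves assemblies and hence restricts to one of $\asm(f)$; so it suffices to work at the level of assemblies.

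For the forward direction of (ii) --- the technical core --- fix a witness $n\in B^\#$ for \eqref{eq:cd} and, using \cref{lem:on_the_nose}, assume $f$ preserves the order strictly. The first step is a combinatory bootstrapping of \eqref{eq:cd}: by \cref{prop:comb_comp} one upgrades the pointwise condition to a uniform form, roughly an element $m\in B^\#$ which, applied to $f$-realizers, simulates post-composition by an arbitrary element of $B^\#$ --- the required $A^\#$-inputs being produced from those \eqref{eq:cd} supplies. The second step uses $m$ to define a functor $G\colon\asm(B)\to\asm(A)$: on objects, $G(Y)$ has underlying set $\{y\in|Y|\mid m\cdot f(a)\subseteq E_Y(y)\text{ for some }a\in A\}$ and existence predicate $E_{G(Y)}(y)=\{a\in A\mid m\cdot f(a)\text{ is defined and contained in }E_Y(y)\}$, which is downwards closed because $f$ preserves the order; on morphisms, $G$ acts by restriction, with tracker assembled from $m$, the tracker of $f$, and the combinators. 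One then produces the counit $\asm(f)(G(Y))\to Y$, realized by $m$ itself, and the unit $X\to G(\asm(f)(X))$, realized by an element built from the uniform witnesses of the bootstrapping step; the triangle identities hold on the nose, since a morphism of assemblies is determined by its underlying function once any tracker is available.

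For the converse of (ii), suppose $\asm(f)$ has a right adjoint $\rho$. I would recover a witness for \eqref{eq:cd} by evaluating the adjunction at one well-chosen assembly, for instance the assembly $D$ over $B$ with $|D|=B^\#$ and $E_D(b)=\downset\{b\}$: transposing the identity of $\rho(D)$ gives the counit $\asm(f)(\rho(D))\to D$, whose tracker --- cleaned up using combinatory completeness --- is an element $n\in B^\#$, and for each $s\in B^\#$ the point $1\to D$ it names transposes to a point $1\to\rho(D)$ picking out some $r\in A^\#$ with $n\cdot f(r)\subseteq\downset\{s\}$, which is exactly \eqref{eq:cd}.

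Finally, for (i): being a $2$-functor, $\asm$ preserves adjunctions, and since it is $2$-fully faithful onto the regular $\nabla$-functors it also reflects them. Hence, if $f$ has a right adjoint $g$ in $\sf{pPCA}$, then $f$ is c.d.\ by \cref{prop:properties_properties}, and $f$ is projective because $\asm(g)$, being regular, preserves regular epis, so that its left adjoint $\asm(f)$ preserves projective objects, which forces $f$ to be projective. Conversely, if $f$ is projective and c.d., replace it by a single-valued $f'\simeq f$ preserving the order strictly; because the values of $f'$ are principal downsets, the functor $G$ above (built with $f'$) turns out to be \emph{regular}, hence --- being also a $\nabla$-functor --- of the form $\asm(g)$ for an explicit $g\colon B\pf A$ (which sends $b$ to the downset of those $a\in A$ with $m\cdot f'(a)\subseteq\downset\{b\}$), and then $\asm(f')\dashv\asm(g)$ reflects to $f'\dashv g$, hence $f\dashv g$. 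I expect the main obstacle to be the forward direction of (ii): both the combinatory strengthening of \eqref{eq:cd} and the verification that the candidate $G$, with exactly the right underlying set, is a genuine right adjoint rather than a merely lax one. Full details can be found in \cite{hofstrajaap} and \cite{johnstone}.
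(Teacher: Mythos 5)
The paper deliberately states this theorem without proof, giving only a remark that sketches the method and defers to \cite{hofstrajaap}, \cite{johnstone} and \cite{faberjaap}; your proposal follows essentially that same method — build the right adjoint of $\asm(f)$ as a left exact $\nabla$-functor out of the computational-density datum (your $G$ is precisely the functor induced by the partial applicative morphism $TB\pf A$, $\beta\mapsto\{a\in A\mid m\cdot f(a)\subseteq\beta\}$, that the paper's remark alludes to), and tie regularity of that right adjoint to projectivity of $f$. The construction is correct; the one step that genuinely carries weight is your ``combinatory bootstrapping'' of \cref{eq:cd} to the uniform Hofstra--van Oosten form (Johnstone's Lemma 3.2), which is exactly what is needed to track $G$ on morphisms and to realize the unit, so that part should not be left as vague as it currently is in a full write-up.
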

\begin{rem}
Even though we do not prove \cref{thm:cd=rightadjoint}, we will make some remarks on the proof method. The right adjoint of $\asm(f)$, if it exists, is certainly a $\nabla$-functor, but it is not necessarily regular. But we do know that it is left exact, and the paper \cite{faberjaap} (Theorem 2.2) shows, for the absolute case, that such functors arise from applicative morphisms $TA\to B$. The assumption that $f$ is c.d.\@ can then be used to construct this morphism, which yields the desired right adjoint of $\asm(f)$. This construction also works for the relative case, except that one may get a \emph{partial} applicative morphism $TA\pf B$. This corresponds to the fact that the right adjoint of $\asm(f)$ may fail to be a $\Gamma$-functor. As for (i), we have that $f$ has a right adjoint in $\sf{pPCA}$ iff the right adjoint of $\asm(f)$ is regular. This is equivalent to: $\asm(f)$ preserve projective objects, which is equivalent to $f$ being projective (hence the name).
\end{rem}

\begin{ex}\label{ex:iota_r_cd}
The morphism $\iota_r$ from \cref{ex:iota_r} is c.d. Indeed, define the element $n:=\lambda^\ast x.xr\in F_r$, where $F_r = \langle A^\#\cup\{r\}\rangle$ is as in \cref{ex:filter=PCA}. If $s\in F_r$, then by \cref{lem:generate}, there exists a term $t(\vec{x},y)$ without constants such that $t(\vec{a},r)\leq s$ for certain $\vec{a}\in A^\#$. Now consider the element $q=\lambda^\ast y. t(\vec{a},y)\in A^\#$. We have $nq \preceq qr \preceq t(\vec{a},r) \leq s$, which implies $n\cdot \iota_r(q)\subseteq\downset\{s\}$, so $n$ satisfies \cref{eq:cd}.

We could also have shown that $\iota_r$ is c.d.\@ by exhibiting a right adjoint in $\sf{PCA}$. Indeed, consider $h\colon A\to A$ defined by $h(a) = \{b\in A\mid br\leq a\}$. First of all, this is clearly a downset of $A$, and we have $\sf{k}a\in h(a)$ for all $a\in A$, and $\sf{k}a\in h(a)\cap A^\#$ for all $a\in A^\#$. Moreover, $h$ preserves the order on the nose, and finally, $h$ is tracked by $\sf{s}\in A^\#$. We also have $r\in\dom^\# h$, since $\sf{i}\in h(r)\cap A^\#$, so $h$ is also a morphism $A[r]\to A$, by the universal property of $\iota_r$. It is easy to see that $\sf{k}\in A^\#$ realizes both $\id_A\leq \iota_r h$ and $\id_{A[r]}\leq h\iota_r$, whereas $\lambda^\ast x.xr\in F_r$ realizes $\iota_rh\leq\id_{A[r]}$. So we have an adjunction $\iota_r\dashv h$ in $\sf{PCA}$ with $\iota_r h\simeq \id_{A[r]}$. Applying the construction $\rt$ now shows that $\rt(A[r])$ is a \emph{subtopos} of $\rt(A)$. In fact, one can show (e.g., using techniques from \cite{Z20}) that $\rt(A[r])$ is an \emph{open} subtopos, given by the subterminal assembly $1_r$ defined by $|1_r| = \{\ast\}$ and $E_{1_r}(\ast) = \downset\{r\}$.
\end{ex}

The remainder of the paper can be seen as an attempt to generalize this example to `higher orders'.


\section{Adjoining a partial function to a PCA}\label{sec:order1}

In this section, we generalize a construction from \cite{Af}, which, given a (discrete, absolute) PCA $A$ and a partial function $f\colon A\pf A$, `freely adjoins' $f$ to $A$. In fact, this can be seen as a higher-order version of the construction of $A[r]$, where we view $A[r]$ as the zeroth-order case (adjoining an element), and the construction in this section as the first-order case. Accordingly, we will denote the resulting PCA by $A[f]$.

Here, of course, we treat the relative, ordered case. Since the order is `hard-wired' into the PCA, we will not consider all partial functions on $A$, but only those that cooperate well with the order, i.e., those from $\mathcal{B}A$. The filter is a bit more flexible. For ordinary computability on $\mathbb{N}$, we know that declaring a certain function to be computable (e.g., by making it into an oracle) causes other functions to be computable as well. Because we make a distinction between computable and non-computable \emph{elements} as well, there is also the possibility that more elements become computable, i.e., that the filter becomes larger. More precisely, we will need to close $A^\#$ under application of $f$. Here a set $X\subseteq A$ is closed under application of $f$ if $f(a)\in A$ whenever $a\in X$ and $f(a)\denotes$.

Before we proceed to define $A[f]$, let us first give a precise meaning to the computability of a function $f\in \mathcal{B}A$.

\begin{defn}\label{defn:rep}
Let $A$ be a PCA and let $f\in \mathcal{B}A$.
\begin{itemize}
\item[(i)]	An $r\in A$ is said to \emph{represent} $f$ if $ra\preceq f(a)$ for all $a\in A$. The function $f$ is called \emph{representable} if it represented by an $r\in A$, and \emph{effectively representable} if it is represented by an $r\in A^\#$.
\item[(ii)]	If $g\colon A\pf B$ is a partial applicative morphism, then we say that an $s\in B$ \emph{represents $f$ w.r.t.\@ $g$} if $s\cdot g(a)$ is defined and a subset of $g(f(a))$ for all $a\in \dom f$. The function $f$ is called \emph{representable w.r.t.\@ $g$} if it is represented w.r.t.\@ $g$ by an $s\in B$, and \emph{effectively representable w.r.t.\@ $g$} if it is represented w.r.t.\@ $g$ by an $s\in B^\#$. 
\end{itemize}
\end{defn}
Observe that (i) is actually a special case of (ii), if we let $g$ be $\id_A$.
\begin{rem}\label{rem:rep}
If $r\in A$, then the partial function $\lambda a.ra$ is always in $\mathcal{B}A$. (Here we mean $\lambda a.ra$ to denote the \emph{partial function} that sends $a$ to $ra$, if defined; not to be confused with $\lambda^\ast a.ra\in A$, which represents this function!) This means that the set of all respresentable resp.\@ effectively representable $f$ can also be described as
\[
\upset\{\lambda a.ra\mid r\in A\}\subseteq \mathcal{B}A \quad\mbox{resp.}\quad\upset\{\lambda a.ra\mid r\in A^\#\}\subseteq \mathcal{B}A.
\]
This will become important in the next section. 
\end{rem}

We observe that effective representability is transferable along partial applicative morphisms in the following sense: if $A\stackrel{g}{\pf} B\stackrel{h}{\pf} C$ and $f\in\mathcal{B}A$ is effectively representable w.r.t.\@ $g$, then $f$ is also effectively representable w.r.t.\@ $hg$.
In order to prove this, assume for simplicity that $h$ preserves the order on the nose, and let $t\in C^\#$ track $h$. If $s\in B$ represents $f$ w.r.t.\@ $g$, then $\lambda^\ast x. ts_0x\in C^\#$ represents $f$ w.r.t.\@ $hg$, where $s_0\in h(s)\cap C^\#$. A similar argument shows that representability is transferable along \emph{total} applicative morphisms.

We will now begin the construction of the desired `free' PCA $A[f]$ is which $f$ is effectively representable. In order for this construction to work, we need to assume that $A$ is not semitrivial. In fact, from this point onwards, we will assume that all PCAs we consider are not semitrivial. The underlying set of $A[f]$ will simply be $A$ itself, and the order on $A[f]$ will simply be the order on $A$. However, we equip $A[f]$ with a new application operation. Informally, a computation in $A[f]$ will be a computation in $A$ with an oracle for $f$. That is, the computation can feed a finite number of inputs to $f$ before coming up with the final result. In order to distinguish this new application from the original one, we will write it as $a\odot b$. Of course, this new application will depend on $f$, so really, we should write $\odot_f$. If there is ambiguity as to which function plays the role of $f$, we will do so. However, doing so consistently would make expressions concerning $A[f]$ rather illegible, so in this section, we exclusively write $\odot$.

\begin{defn}\label{defn:Af_PAP}
Let $A$ be a PCA and let $f\in\mathcal{B}A$. We define the PAP $A[f] = (A,\odot,\leq)$ as follows. For $a,b,c\in A$, we say that $a\odot b = c$ if and only if there exists a (possibly empty) sequence $u_0, \ldots, u_{n-1}\in A$ such that
\begin{itemize}
\item	for all $i<n$, we have $\sf{p}_0(a\cdot[b,u_0, \ldots, u_{i-1}])\leq \bot$ and $f(\sf{p}_1(a\cdot[b,u_0, \ldots, u_{i-1}])) = u_i$;
\item	$\sf{p}_0(a\cdot[b,u_0, \ldots, u_{n-1}])\leq \top$ and $\sf{p}_1(a\cdot[b,u_0, \ldots, u_{n-1}]) = c$.
\end{itemize}
The sequence $u_0, \ldots, u_{n-1}$ is called a \emph{$b$-interrogation of $f$ by $a$}.
\end{defn}
Intuitively, the coefficients in the interrogation are the values the oracle returns in the course of the computation of $a\odot b$. At each stage of the computation, the algorithm $a$ is allowed to consult the input $b$ and the values $u_0, \ldots, u_{i-1}$ obtained from the oracle so far. Formally, this means that we let $a$ act on the coded sequence $[b,u_0, \ldots, u_{i-1}]$. We view the result as carrying two pieces of information. The first piece is a (sub)boolean, which tells us whether the computation has gathered enough oracle values to output a result. If not, then the second piece of information is fed to the oracle; if the oracle need not be consulted anymore, then this second piece is the output.

Since $A$ is not semitrivial, there is at most one $b$-interrogation of $f$ by $a$, which also means that $a\odot b = c$ for at most one $c\in A$. Observe that $a\odot b$ may fail to be defined in several ways. First of all, one of the applications in $A$ could be undefined. In addition, $\sf{p}_0(a\cdot[b,u_0, \ldots, u_{i-1}])$ could fail to be a subboolean or $\sf{p}_1(a\cdot[b,u_0, \ldots, u_{i-1}])$ could lie outside the domain of $f$ (i.e., the oracle fails to return a value). Finally, it could happen that the computation keeps feeding inputs to the oracle indefinitely, never coming up with a final output. For example, if $a = \sf{k}(\sf{p}\bot\bot)$, then $a\odot b$ will always be undefined, even if $A$ itself is a total PCA and $f$ is a total function.

\begin{rem}\label{rem:original_Af}
In the original definition of $a\odot b$ from \cite{Af}, which is for discrete PCAs, the sequence $u_0, \ldots, u_{n-1}$ should satisfy:
\begin{itemize}
\item	for all $i<n$, there exists a $v_i\in A$ such that $a\cdot[b, u_0, \ldots, u_{i-1}] = \sf{p}\bot v_i$ and $f(v_i) = u_i$;
\item	there exists a $c\in A$ such that $a\cdot [b, u_0, \ldots, u_{n-1}] = \sf{p}\top c$,
\end{itemize}
in which case $a\odot b = c$. Since we are working with ordered PCAs, however, we cannot hope to get \emph{equalities} between elements from $A$, since all the available combinators only yield inequalities. We do have the following, which we will use most often when computing $a\odot b$. If there are $\bar{u}_0, \ldots, \bar{u}_{n-1},c\in A$ such that:
\begin{itemize}
\item	for all $i<n$, there exists a $v_i\in A$ such that $a\cdot[b, \bar{u}_0, \ldots, \bar{u}_{i-1}] \leq \sf{p}\bot v_i$ and $f(v_i) \leq \bar{u}_i$;
\item	$a\cdot [b, \bar{u}_0, \ldots, \bar{u}_{n-1}] \leq \sf{p}\top c$,
\end{itemize}
then $a\odot b\leq c$. (We write $\bar{u}_i$ rather than $u_i$ because this sequence need not be the actual $b$-interrogation of $f$ by $a$.)
\end{rem}

Of course, we should show that $A[f]$ is actually a PAP, which is also the point where we need that $f\in\mathcal{B}A$. Suppose we have $a'\leq a$ and $b'\leq b$ such that $a\odot b\denotes$, and let $u_0, \ldots, u_{n-1}$ be the $b$-interrogation of $f$ by $a$. Then by induction, one easily shows that there exist $u'_i\leq u_i$ such that $u'_0, \ldots, u'_{n-1}$ is a $b'$-interrogation of $f$ by $a'$; and from this, we get that $a'\odot b'$ is defined and $a'\odot b'\leq a\odot b$, as desired.

In order to complete the definition of $A[f]$ as a PAS, it remains to define $A[f]^\#$. If $A[f]$ is to be the free PCA in which $f$ is effectively representable, there should be a morphism $A\to A[f]$ (cf.\@ the zeroth-order case). The identity on $A$, i.e., $a\mapsto\downset\{a\}$, is an obvious candidate, and in order for this to be a morphism, we must have $A^\#\subseteq A[f]^\#$. We cannot, in general, let $A[f]^\#$ be \emph{equal} to $A^\#$, however, since $A^\#$ could fail to be closed under (defined) $\odot$. The following definition remedies this.
\begin{defn}
Let $A$ be a PCA and let $f\in\mathcal{B}A$. The PAP $A[f]$ is made into a PAS by setting $A[f]^\#:=\langle A^\#\rangle$, where the generated filter is taken in the PAP $A[f]$, rather than $A$, of course.
\end{defn}
This, by definition, makes $A[f]$ into a PAS.

\begin{prop}\label{prop:Af_PCA}
For each PCA $A$ and $f\in\mathcal{B}A$, the quadruple $A[f] = (A,A[f]^\#,\odot,\leq)$ is a PCA.
\end{prop}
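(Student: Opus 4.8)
The plan is to verify the axioms (A)--(F) for $A[f]$, where (A) has essentially already been checked in the text preceding the statement (the argument that $A[f]$ is a PAP, using $f \in \mathcal{B}A$ to propagate refinements of an interrogation), and (B), (C) hold by construction since $A[f]^\# = \langle A^\# \rangle$ is a filter on the PAP $A[f]$. So the real work is combinatory completeness, i.e.\ producing $\sf{k}$ and $\sf{s}$ for $\odot$ inside $A^\# \subseteq A[f]^\#$.

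First I would observe that there is a canonical way to turn an ordinary element $a \in A$ into one that performs the ``trivial'' interrogation: define, once and for all using the original combinators, a map $\langle\,\cdot\,\rangle \colon A \to A$ in $A^\#$ (on codes) such that $\langle a \rangle \odot b \leq ab$ for all $a,b$ --- namely $\langle a\rangle$ on input the coded sequence $[b, \ldots]$ reads off $b = \sf{fst}$, computes $ab$, and outputs $\sf{p}\top(ab)$, so that the empty sequence is the $b$-interrogation. Concretely $\langle a \rangle = \lambda^\ast x.\, \sf{p}\top(a(\sf{fst}\,x))$, which is in $A^\#$ whenever $a$ is, and more usefully there is a single combinator $\sf{j} := \lambda^\ast a x.\, \sf{p}\top(a(\sf{fst}\,x)) \in A^\#$ with $\sf{j}a \odot b \preceq ab$. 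This lets me \emph{lift} ordinary combinatory structure into $A[f]$. The key technical point, which I would state as a lemma, is that $\odot$-application of lifted elements reduces to iterated ordinary application: if I have a term $t$ built from $\odot$ and constants, I can find an $A$-element computing it via a ``driver'' that simulates the interrogations.

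The cleanest route, and the one I expect the author takes, is to exhibit a \emph{computationally dense} (or at least left-adjoint-like) behaviour, but since at this point in the paper the morphism $A \to A[f]$ has not yet been constructed, I would instead directly build $\sf{k}_\odot$ and $\sf{s}_\odot$. For $\sf{k}_\odot$: I want $\sf{k}_\odot \odot a \odot b \leq a$. Take $\sf{k}_\odot := \sf{j}(\lambda^\ast x.\, \sf{j}(\sf{k}x))$ --- unwinding, $\sf{k}_\odot \odot a \preceq (\lambda^\ast x.\sf{j}(\sf{k}x))a \preceq \sf{j}(\sf{k}a)$, and then $\sf{j}(\sf{k}a)\odot b \preceq (\sf{k}a)b \leq a$, all by \cref{rem:original_Af} and the defining inequality for $\sf{j}$. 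For $\sf{s}_\odot$ I want $\sf{s}_\odot \odot a \odot b \odot c \preceq (a \odot c)\odot(b \odot c)$ together with $\sf{s}_\odot \odot a \odot b$ always defined. The subtlety is that $a \odot c$ must be computed \emph{inside} an interrogation: the element $\sf{s}_\odot \odot a \odot b$, on input (the code of a sequence starting with) $c$, must run the interrogation for $a \odot c$, then the one for $b \odot c$, then the one for $(a\odot c)\odot(b\odot c)$, correctly \emph{interleaving} its own oracle queries with those of the subcomputations. This is done by a fixpoint construction ($\sf{z} \in A^\#$): the driver maintains which sub-interrogation it is in, unpacks the so-far-received oracle answers from its coded-sequence argument, and either emits $\sf{p}\bot(\text{next query})$ or, when a subcomputation halts, moves to the next phase; since all the bookkeeping (sequence coding via \cref{defn:rij}, case distinctions via $\sf{if}$--$\sf{then}$--$\sf{else}$, the recursor) lives in $A^\#$, the resulting $\sf{s}_\odot$ is in $A^\#$. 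Defining $\sf{s}_\odot$ as $\sf{z}$ of a suitable $\lambda^\ast$-term handles the ``$\sf{s}_\odot \odot a \odot b$ defined'' requirement, since $\sf{z}$ always produces a result and feeding it $a$, then $b$, only ever emits ``output'' (subboolean $\top$) immediately --- the interrogations only start once $c$ arrives.

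The main obstacle is exactly this interleaving bookkeeping for $\sf{s}_\odot$: one must be careful that when the driver is simulating, say, the $b \odot c$ phase, the oracle answers it has received interleave those consumed by the (already completed) $a \odot c$ phase, so the driver has to track an offset into its coded-sequence argument and feed each sub-interrogation only \emph{its own} answers. I would organize this by having the driver, at each step, recompute all three phases from scratch using only the original (oracle-free) application and the list of answers received so far, peeling off answers in order; this is a finite computation at each stage because each phase's interrogation is finite up to that point, and \cref{rem:original_Af} guarantees that the inequalities compose in the right direction to yield $\sf{s}_\odot \odot a \odot b \odot c \preceq (a\odot c)\odot(b\odot c)$. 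I would present the $\sf{k}_\odot$ case in full and then say that $\sf{s}_\odot$ is constructed by a guarded-fixpoint simulation as just sketched, leaving the (routine but tedious) verification to the reader, in keeping with the paper's style; alternatively one can cite \cite{Af} for the discrete absolute case and note that the only new ingredients --- order-preservation (handled by $f \in \mathcal{B}A$ and axiom (A), already checked) and keeping combinators in $A^\#$ (immediate since every ingredient is) --- go through verbatim.
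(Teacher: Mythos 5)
Your proposal is correct and follows essentially the same route as the paper: axioms (A)--(C) are dispatched exactly as you say, $\sf{k}$ for $\odot$ is obtained by wrapping the ordinary $\sf{k}$ in a ``trivial interrogation'' lift (your $\sf{j}$ is a curried form of the paper's $t_f$, and your $\sf{k}_\odot$ unfolds to the paper's $\lambda^\ast x.\sf{p}\top(\lambda^\ast y.\sf{p}\top(\sf{fst}x))$), and $\sf{s}$ for $\odot$ is built from a driver $S\in A^\#$ that interleaves the three sub-interrogations of $a\odot c$, $b\odot c$ and $(a\odot c)\odot(b\odot c)$ by consuming consecutive blocks of the received oracle answers, with the outer wrapping ensuring $\sf{s}_\odot\odot a\odot b$ is always defined via empty interrogations. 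The paper likewise only specifies $S$ by its behaviour on coded sequences and leaves the verification to the reader, so your level of detail matches its own.
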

\begin{proof}
We need to exhibit suitable combinators $\sf{k}_f$ and $\sf{s}_f$ for $A[f]$. Recall the combinator $\sf{fst}\in A^\#$ from \cref{defn:rij}. For $\sf{k}_f$, we can take $\lambda^\ast x.\sf{p}\top(\lambda^\ast y.\sf{p}\top (\sf{fst}x))\in A^\#\subseteq A[f]^\#$. Indeed, if $a,b\in A$, then $\sf{k}_f\odot a\leq (\lambda^\ast y.\sf{p}\top (\sf{fst}x))[[a]/x]$, and
\[
(\sf{k}_f\odot a)\cdot[b]\preceq (\sf{p}\top(\sf{fst}x))[[a]/x,[b]/y] = \sf{p}\top(\sf{fst} [a]) \leq \sf{p}\top a,
\]
which means that $\sf{k}_f\odot a\odot b\leq a$, as desired. Observe that $\sf{k}_f$ does not, in fact, depend on $f$, and that the computation of $\sf{k}_f\odot a\odot b$ does not consult the oracle at all.

The definition of $\sf{s}_f$ (which will also not depend on $f$) is a little more involved, and writing down an actual definition of $\sf{s}_f$ would be quite cumbersome. Therefore, we simply explain how to construct it. Using recursion and the elementary operations on sequences and booleans, we can construct an element $S\in A^\#$ such that for all $a$, $b$ and $u = [u_0, \ldots, u_{n-1}]$ from $A$, we have:
\begin{itemize}
\item	if $\forall i\leq n\s (\sf{p}_0(x\cdot [u_0, \ldots, u_{i-1}])\leq \bot)$, then $Sxyu\preceq xu$;
\item	if $i$ is minimal such that $\sf{p}_0(x\cdot [u_0, \ldots, u_{i-1}])\leq \top$, and 
\[\forall j\leq n\s(i\leq j\to \sf{p}_0(y\cdot [u_0, u_i, \ldots, u_{j-1}])\leq \bot),\] then $Sxyu\preceq y\cdot [u_0, u_i, \ldots, u_{n-1}]$;
\item	if $i$ is minimal such that $\sf{p}_0(x\cdot [u_0, \ldots, u_{i-1}])\leq \top$, and $j\geq i$ is minimal such that $\sf{p}_0(y\cdot [u_0, u_i, \ldots, u_{j-1}])\leq \top$, then 
\[
Sxyu\preceq \sf{p}_1(x\cdot[u_0, \ldots, u_{i-1}])\cdot[\sf{p}_1(y\cdot[u_0, u_i, \ldots, u_{j-1}]),u_j,\ldots, u_{n-1}].
\]
\end{itemize}
We leave it to the reader to check that $Sab\odot c\preceq a\odot c\odot (b\odot c)$ for $a,b,c\in A$. (This is a good exercise in understanding what the definition of $S$ above actually does!) Finally, we can set
\[\sf{s}_f = \lambda^\ast x. \sf{p}\top(\lambda^\ast y. S(\sf{fst}x)(\sf{fst}y))\in A^\#\subseteq A[f]^\#.
\]
In the same way as we did for $\sf{k}_f$, one can verify that $\sf{s}_f\odot a\odot b\leq Sab$, so we can conclude that $A[f]$ is a PCA.
\end{proof}

Before we continue, we introduce two algorithms $t_f,r_f\in A^\#\subseteq A[f]^\#$ that are relevant throughout this section. Set
\[
t_f := \lambda^\ast x.\sf{p}\top(\lambda^\ast y.\sf{p}\top(\sf{fst}x(\sf{fst}y))),
\]
where $\sf{fst}$ is as above. A calculation similar to the ones in the proof above above shows that $t_f\odot a\odot b\preceq a\cdot b$, for all $a,b\in A$. Moreover, set
\[
r_f := \lambda^\ast x.(\sf{if}\ \sf{zero}(\sf{pred}(\sf{lh}x))\ \sf{then}\ \sf{p}\bot (\sf{fst}x)\ \sf{else}\ \sf{p}\top (\sf{read}x1)).=,
\]
where $\sf{lh}, \sf{fst}, \sf{read}\in A^\#$ are as in \cref{defn:rij}. If $a\in A$ is such that $f(a)\denotes$, then
\begin{itemize}
\item	$r_f\cdot [a]\leq \sf{p}\bot a$ and $f(a)\denotes$;
\item	$r_f\cdot [a,f(a)]\leq \sf{p}\top f(a)$,
\end{itemize}
which means that $r_f\odot a\leq f(a)$. So we have $r_f\odot a\preceq f(a)$ for all $a\in A$.

In analogy with \cref{ex:iota_r}, we define $\iota_f\colon A\to A[f]$ by $\iota_f(a) = \downset\{a\}$.
\begin{prop}\label{prop:iotaf}
The map $\iota_f$ is a total applicative morphism, and $f$ is effectively representable w.r.t.\@ $\iota_f$. Moreover, $\iota_f$ has a right adjoint $h\colon A[f]\to A$ satisfying $\iota_fh\simeq \id_{A[f]}$. In particular, $\iota_f$ is c.d.\@ and decidable.
\end{prop}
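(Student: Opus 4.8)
The plan is to check the clauses in turn, using the combinators $\sf{k}_f,\sf{s}_f,t_f,r_f\in A^\#\subseteq A[f]^\#$ built above. That $\iota_f$ is a \emph{total} applicative morphism is essentially immediate: it is total and satisfies condition~1 of \cref{defn:pam} because $a\in\iota_f(a)$ and $A^\#\subseteq A[f]^\#$; a tracker (condition~2) is $t_f$, since $t_f\odot a\odot b\preceq ab$ gives $t_f\odot\iota_f(a)\odot\iota_f(a')\subseteq\iota_f(aa')$ whenever $aa'\denotes$ (axiom~(A) for $A$ lets one pass from $a,a'$ to all elements below, so that the sets in question are defined); and condition~3 holds because $\iota_f$ preserves the order on the nose, so the identity combinator of $A[f]$ serves. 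Likewise, $r_f\in A^\#$ witnesses effective representability of $f$ w.r.t.\ $\iota_f$, as $r_f\odot a\preceq f(a)$ gives $r_f\odot\iota_f(a)\subseteq\iota_f(f(a))$ for $a\in\dom f$.

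For the right adjoint I would set $h(a):=\{b\in A\mid b\odot r_f\leq a\}$, a downset of $A$ by axiom~(A) for $A[f]$. The map $h$ is total and tracked: $\sf{k}_f\odot a\in h(a)$ for every $a$ (this element is always defined, and $(\sf{k}_f\odot a)\odot r_f\leq a$), and if $a_1\odot a_2\denotes$ with $b_i\in h(a_i)$ then $\sf{s}_f\odot b_1\odot b_2\in h(a_1\odot a_2)$, since $(\sf{s}_f\odot b_1\odot b_2)\odot r_f\preceq(b_1\odot r_f)\odot(b_2\odot r_f)\leq a_1\odot a_2$; this produces an $A^\#$-tracker for $h$, and order preservation on the nose follows from $b\odot r_f\leq a\leq a'\Rightarrow b\in h(a')$.

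The only clause that needs genuine work is condition~1 of \cref{defn:pam} for $h$: that $h(a)\cap A^\#\neq\emptyset$ for every $a\in A[f]^\#$. I would establish this by showing $\dom^\# h$ is a filter of the PAP $A[f]$ containing $A^\#$; since $A[f]^\#=\langle A^\#\rangle$ (the filter generated inside $A[f]$), it then follows that $A[f]^\#\subseteq\dom^\# h$. Now $\dom^\# h$ contains $A^\#$ because $\sf{k}_f\odot a\in A^\#$ whenever $a\in A^\#$; it is upward closed by the remark above; and it is closed under $\odot$ because $\sf{s}_f\odot b_1\odot b_2\in A^\#$ whenever $b_1,b_2\in A^\#$ (combined with the witness above). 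These two closure facts about $A^\#$ are exactly where the argument bites: they hold because the $\odot$-computations evaluating $\sf{k}_f\odot a$ and $\sf{s}_f\odot b_1\odot b_2$ consult no oracle values at all, and therefore factor through ordinary applications of elements of $A^\#$. I expect this to be the main obstacle, everything else being bookkeeping with the combinators from \cref{prop:Af_PCA}.

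Finally, in a preorder-enriched category $\iota_f\dashv h$ reduces to the inequalities $\id_A\leq h\iota_f$ and $\iota_f h\leq\id_{A[f]}$. The first is realized by an element of $A^\#$ that maps $a$ below $\sf{k}_f\odot a$ (computed without the oracle, hence available in $A^\#$), using $(\sf{k}_f\odot a)\odot r_f\leq a$; the second is realized by an element of $A[f]^\#$ that on input $b$ computes $b\odot r_f$, using $b\odot r_f\leq a$ for $b\in h(a)$. Since $\sf{k}_f$ also realizes $\id_{A[f]}\leq\iota_f h$, we moreover obtain $\iota_f h\simeq\id_{A[f]}$. As $\iota_f$ is now a left adjoint, \cref{prop:properties_properties} yields that it is c.d.\ and decidable.
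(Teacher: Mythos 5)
Your proof is correct and follows essentially the same route as the paper's: the paper defines the right adjoint as $h(a)=\{b\in A\mid b\odot\sf{i}\leq a\}$ (your choice of $r_f$ in place of $\sf{i}$ as the dummy argument is immaterial), tracks it by the element $S$ from the proof of \cref{prop:Af_PCA}, establishes $A[f]^\#\subseteq\dom^\# h$ by exactly your filter argument, and uses the same realizers for the unit, counit and $\iota_f h\simeq\id_{A[f]}$. The only cosmetic difference is that you phrase the unit and the tracker via oracle-free $\odot$-computations ($\sf{k}_f\odot a$ and $\sf{s}_f\odot b_1\odot b_2$) where the paper works directly with the underlying $A$-elements ($\sf{p}\top a$ and $Sb_1b_2$); since a tracker for $h\colon A[f]\to A$ must act through the application of the codomain $A$, your parenthetical observation that these computations consult no oracle and hence factor through ordinary applications in $A^\#$ is genuinely needed here --- most cleanly discharged by taking $S$ itself as the tracker, as the paper does.
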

\begin{proof}
Since $A^\#\subseteq A[f]^\#$, it is clear that $\iota_f$ satisfies the first requirement; and it is also obvious that $\iota_f$ preserves the order on the nose. Moreover, $t_f$ is a tracker, so $\iota_f$ is indeed a total applicative morphism, and $r_f$ represents $f$ w.r.t.\@ $\iota_f$.

We define the required right adjoint $h\colon A[f]\to A$ by:
\[
h(a) := \{b\in A\mid b\odot\sf{i} \leq a\},
\]
where $\sf{i}\in A^\#$ is the identity combinator for $A$. It is clear that $h(a)$ is a downset, and that $h$ preserves the order on the nose. Now consider the $S\in A^\#$ constructed in the proof of \cref{prop:Af_PCA}. If $b\in h(a)$ and $b'\in h(a')$, then $Sbb'\odot \sf{i}\preceq (b\odot \sf{i})\odot(b'\odot \sf{i}) \preceq a\odot a'$, so if $a\odot a'\denotes$, then $Sbb'\in h(a\odot a')$. In other words, $S$ is a tracker for $h$. Next, we observe that $\sf{p}\top a\in h(a)$ for all $a\in A$, so in particular, we have $A^\#\subseteq\dom^\# h$. Since $h$ preserves the order and has a tracker, we already know that $\dom^\# h$ is a filter of the partial applicative \emph{preorder} $A[f]$. Combining this with $A^\#\subseteq \dom^\# h$ yields $A[f]^\#\subseteq \dom^\# h$, so $h$ is a total applicative morphism.

By the observation above, we know that $\sf{p}\top\in A^\#$ realizes $\id_A\leq h\iota_f$. Finally, $\id_{A[f]}\leq \iota_fh$ and $\iota_fh\leq\id_{A[f]}$ are realized by $\sf{k}_f$ and $\lambda^\ast x. x\odot \sf{i}$, respectively.
\end{proof}

Since $\iota_f$ is decidable, we see that the boolean $\bot,\top\in A^\#$ of $A$ can also serve as booleans in $A[f]$. In particular, $A[f]$ is also not semitrivial, given that $A$ is not semitrivial.

In analogy with \cref{ex:iota_r_cd}, we see that $\rt(A[f])$ is a subtopos of $\rt(A)$. Before we proceed to establish the universal property of $A[f]$, we give an alternative description of $A[f]^\#$.
\begin{lem}\label{lem:alt_Af}
Let $A$ be a PCA and let $f\in \mathcal{B}A$. Then $A[f]^\#$ is the least filter on $A$ which is closed under application of $f$.
\end{lem}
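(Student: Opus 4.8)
The claim is that $A[f]^\# = \langle A^\# \rangle$ (generated inside the PAP $A[f]$) equals the least filter $F$ on $A$ (in the original sense, i.e.\ closed under $\cdot$ and upwards closed) that is closed under application of $f$, where "closed under application of $f$" means $f(a) \in F$ whenever $a \in F$ and $f(a)\denotes$. The natural strategy is to prove the two inclusions separately, the hard direction being $A[f]^\# \subseteq F$, i.e.\ that $\langle A^\# \rangle$ is contained in every filter closed under $f$.

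\emph{The direction $F \subseteq A[f]^\#$.} Here I would check that $A[f]^\#$ is itself a filter on $A$ (in the original sense) that is closed under application of $f$; minimality of $F$ then gives the inclusion. Since $A[f]^\#$ is a filter on the PAP $A[f]$, it is automatically upwards closed and closed under $\odot$. What needs checking is (a) that it is closed under the \emph{original} application $\cdot$ of $A$, and (b) that it is closed under application of $f$. For (a), I would use the tracker $t_f \in A^\# \subseteq A[f]^\#$ from before \cref{prop:iotaf}, which satisfies $t_f \odot a \odot b \preceq a\cdot b$: if $a,b \in A[f]^\#$ and $ab\denotes$, then $t_f \odot a \odot b$ is defined and lies in $A[f]^\#$ (closure under $\odot$), and since it is $\leq ab$ and $A[f]^\#$ is upwards closed, $ab \in A[f]^\#$. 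For (b), I would use $r_f \in A^\# \subseteq A[f]^\#$ with $r_f \odot a \preceq f(a)$: if $a \in A[f]^\#$ and $f(a)\denotes$, then $r_f \odot a$ is defined, lies in $A[f]^\#$, and is $\leq f(a)$, so $f(a) \in A[f]^\#$ by upward closure.

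\emph{The direction $A[f]^\# \subseteq F$.} Since $A^\# \subseteq F$ and $A[f]^\# = \langle A^\#\rangle$ is the least filter on the PAP $A[f]$ containing $A^\#$, it suffices to show $F$ is a filter on the PAP $A[f]$, i.e.\ that $F$ is closed under $\odot$ (it is already upwards closed). So suppose $a, b \in F$ with $a \odot b\denotes$, witnessed by a $b$-interrogation $u_0, \ldots, u_{n-1}$ with output $c = a\odot b$. I would argue, by induction on $i$, that each $u_i \in F$: the element $a\cdot[b, u_0, \ldots, u_{i-1}]$ is built from $a, b, u_0, \ldots, u_{i-1} \in F$ and combinators from $A^\# \subseteq F$ using application, hence lies in $F$ (using that $F$ is closed under $\cdot$ and contains the sequence-coding combinators); applying $\sf{p}_1 \in A^\#$ keeps us in $F$; and since $f$ is defined on this element (by the interrogation condition) and $F$ is closed under application of $f$, we get $u_i = f(\sf{p}_1(a\cdot[b,u_0,\ldots,u_{i-1}])) \in F$. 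Finally $c = \sf{p}_1(a\cdot[b, u_0, \ldots, u_{n-1}])$ is again obtained from elements of $F$ and combinators in $A^\#$ via application, so $c \in F$; since $a \odot b \leq c$ and $F$ is upwards closed, $a \odot b \in F$.

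\emph{Main obstacle.} The only genuinely delicate point is bookkeeping the fact that the intermediate expressions $a\cdot[b, u_0, \ldots, u_{i-1}]$ really are elements of $F$: this rests on $F$ being closed under the \emph{original} application and containing $A^\#$ (hence all the coding combinators $\sf{p}$, $j^n$, etc.), so that forming the code $[b, u_0, \ldots, u_{i-1}]$ and applying $a$ to it stays inside $F$. Once that is set up, the induction is routine. Everything else is a direct unwinding of the definitions of $A[f]^\#$, $\odot$, and "closed under application of $f$", using the auxiliary combinators $t_f$ and $r_f$ already constructed.
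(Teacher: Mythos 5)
Your proposal is correct and follows essentially the same route as the paper: the inclusion $F\subseteq A[f]^\#$ via the combinators $t_f$ and $r_f$ together with upward closure, and the inclusion $A[f]^\#\subseteq F$ via induction along the $b$-interrogation, using that the intermediate expressions are built from elements of $F$, combinators in $A^\#$, and the original application. The paper merely packages the two directions as a single biconditional (an upwards closed $X\supseteq A^\#$ is closed under defined $\odot$ iff it is closed under $\cdot$ and under application of $f$), which is only a cosmetic difference.
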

\begin{proof}
We need to show the following: if $A^\#\subseteq X\subseteq A$ and $X$ is upwards closed, then $X$ is closed under defined $\odot$ if and only if $X$ is closed under application in $A$ and application of $f$.

First of all, suppose that $X$ is closed under defined $\odot$. If we have $a,b\in X$ such that $ab\!\denotes$, then $t_f\odot a\odot b\leq ab$, so since $t_f,a,b\in X$, we have that $t_f\odot a\odot b\in X$ as well, hence $ab\in X$. Similarly, if $a\in X$ is such that $f(a)\denotes$, then $r_f\odot a\leq f(a)$ yields that $f(a)\in X$ as well.

Conversely, suppose that $X$ is closed under application in $A$ and application of $f$. Suppose that we have $a,b\in X$ such that $a\odot b\denotes$, and let $u_0, \ldots, u_{n-1}$ be the $b$-interrogation of $f$ by $a$. Then using induction, we may show that $u_i\in X$ for all $i<n$. Indeed, let $i<n$ and suppose that $u_0, \ldots, u_{i-1}\in X$. Then $\sf{p}_1(a\cdot [b, u_0, \ldots, u_{i-1}])$, which is an expression built using $a,b,u_0, \ldots, u_{i-1}\in X$, combinators from $A^\#\subseteq X$ and application, must be in $X$ as well. Since $X$ is closed under $f$, it follows that $u_i\in X$ as well, completing the induction. Finally, we see that $a\odot b = \sf{p}_1(a\cdot [b, u_0, \ldots, u_{n-1}])$ is in $X$ as well, completing the proof.
\end{proof}

\begin{thm}\label{thm:universal_Af}
Let $g\colon A\pf B$ be a decidable partial applicative morphism, and let $f\in\mathcal{B}A$. Then $g$ factors, up to isomorphism, through $\iota_f$ if and only if $f$ is effectively representable w.r.t.\@ $g$.
\end{thm}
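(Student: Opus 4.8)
The plan is to prove both directions. For the ``only if'' direction, suppose $g$ factors up to isomorphism through $\iota_f$, say $g \simeq g'\iota_f$ for some partial applicative morphism $g'\colon A[f]\pf B$. We already know that $f$ is effectively representable w.r.t.\@ $\iota_f$ (by \cref{prop:iotaf}, witnessed by $r_f\in A^\#$). Effective representability transfers along partial applicative morphisms on the left (the remark following \cref{rem:rep}), so $f$ is effectively representable w.r.t.\@ $g'\iota_f$, hence w.r.t.\@ $g$, since effective representability w.r.t.\@ $g$ only depends on the isomorphism class of $g$. (One should check this last point: if $g\simeq g'\iota_f$ and $s$ represents $f$ w.r.t.\@ $g'\iota_f$, then composing $s$ with trackers for the realizers of the isomorphism yields a representative w.r.t.\@ $g$; this is routine.)

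For the ``if'' direction---the substantive one---assume $g\colon A\pf B$ is decidable and that $f$ is effectively representable w.r.t.\@ $g$, say by $s\in B^\#$. I would define a candidate factorization $g'\colon A[f]\pf B$ by setting $g'(a) = g(a)$ on underlying sets (recall $A[f]$ and $A$ have the same carrier and order), and verify that $g'$ is a partial applicative morphism $A[f]\pf B$. Requirement 1 (effectivity on $(A[f])^\#$) follows because $(A[f])^\# = \langle A^\#\rangle$ is generated (in $A[f]$) by $A^\#$, and $\dom^\# g'$ must be shown to be a filter of $A[f]$ containing $A^\#$; by \cref{lem:dom=filter} applied to the ambient structure, $\dom^\# g = \dom^\# g'$ as a \emph{set} is already a filter of $A$, but I need it closed under $\odot$. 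Requirement 3 (order preservation) is immediate since $g'$ and $g$ agree and the orders coincide. The crux is Requirement 2: I must produce a tracker $t'\in B^\#$ for $g'$, i.e.\@ an element that simulates $\odot$-application inside $B$ along $g$.

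The main obstacle is therefore the construction of this tracker, which amounts to implementing the oracle-interrogation loop of \cref{defn:Af_PAP} inside $B$. The idea: given (realizers in $B$ of) $a$ and $b$, compute $g$-images of the coded sequences $[b, u_0,\ldots,u_{i-1}]$ using the $B$-side combinators for pairing and sequence-coding together with a tracker for $g$; at each stage decide whether the first projection is $\top$ or $\bot$ using the \emph{decider} $d\in B^\#$ for $g$ (this is exactly where decidability of $g$ is used, paralleling how $\iota_f$ was shown decidable); if $\bot$, apply $s$ to get the $g$-image of $f$ of the second projection and loop; if $\top$, output the second projection. This whole procedure is built using a guarded fixpoint operator $\sf{z}\in B^\#$, so $t'\in B^\#$. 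One then checks by induction on the length of the $b$-interrogation $u_0,\ldots,u_{n-1}$ of $f$ by $a$ that, whenever $a\odot b\denotes$, the procedure terminates with an element of $g(a\odot b)$, using at stage $i$ that $s\cdot g(\sf{p}_1(a\cdot[b,\ldots]))\subseteq g(f(\sf{p}_1(a\cdot[b,\ldots]))) = g(u_i)$. Once $t'$ is a tracker, $\dom^\# g'$ is a filter of $A[f]$ containing $A^\#$, hence contains $(A[f])^\# = \langle A^\#\rangle$, giving Requirement 1; so $g'$ is a partial applicative morphism. Finally, $g'\iota_f \simeq g$: on underlying functions $g'\iota_f(a) = \bigcup_{a'\leq a} g(a') $, which is isomorphic to $g$ by \cref{lem:on_the_nose}, with the realizers coming from $g$'s order-preservation element. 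This completes the factorization, and its uniqueness up to isomorphism follows since $\iota_f$ is (split) epi in the relevant sense---or more simply, any two factorizations $g_1', g_2'$ with $g_i'\iota_f\simeq g$ satisfy $g_1'\simeq g_2'$ because $\iota_f h \simeq \id_{A[f]}$ from \cref{prop:iotaf}, so $g_i' \simeq g_i'\iota_f h \simeq gh$.
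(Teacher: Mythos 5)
Your proposal is correct and follows essentially the same route as the paper: the ``only if'' direction via transfer of effective representability along $\iota_f$, and the ``if'' direction by showing $g$ itself is a partial applicative morphism $A[f]\pf B$, with the tracker built from a guarded fixpoint implementing the interrogation loop using the decider $d$ and the representer $s$. The only cosmetic difference is that the paper verifies $A[f]^\#\subseteq\dom^\# g$ via \cref{lem:alt_Af} (closure of $\dom^\# g$ under application of $f$), whereas you deduce it from the tracker and the generated-filter description of $A[f]^\#$ --- both arguments are valid and amount to the same content.
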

\begin{proof}
The `only if' statement is clear, since $f$ is effectively representable w.r.t.\@ $\iota_f$, and effective representability transfers along partial applicative morphisms. Conversely, suppose that $f$ is effectively representable w.r.t.\@ $g$. We need to show that $g$ is also a partial applicative morphism $A[f]\pf B$; then we will have that the triangle
\[
\begin{tikzcd}
A \arrow[rr, "g"] \arrow[rd, "\iota_f"'] &                         & B \\
                                         & {A[f]} \arrow[ru, "g"'] &  
\end{tikzcd}
\]
commutes up to isomorphism. Assume for simplicity that $g$ preserves the order on the nose, let $t\in B^\#$ be a tracker of $f$, let $d\in B^\#$ be a decider for $g$, and let $s\in B^\#$ represent $f$ w.r.t.\@ $g$.

First, we need to show that $A[f]^\#\subseteq \dom^\# g$. We already know that $\dom^\# g$ is a filter of the PCA $A$, so according to \cref{lem:alt_Af}, it suffices to show that $\dom^\# g$ is closed under application of $f$. So suppose that $a\in\dom^\# g$ and that $f(a)\denotes$. Then there exists a $b\in g(a)\cap B^\#$, and we get $sb\in g(f(a))\cap B^\#$, so $f(a)\in\dom^\# g$ as well, as desired. Of course, $g$ still preserves the order when considered as a morphism $A[f]\pf B$, so it remains to construct a tracker.

First, recall the combinators $\sf{unit},\sf{ext}\in A^\#$ from \cref{defn:rij}. For $i=0,1$, define $\sf{p}'_i = \lambda^\ast x. tp_ix\in B^\#$, where $p_i$ is any element from $g(\sf{p}_i)\cap B^\#$. This element has the property that $\sf{p}'_i\cdot g(a)\preceq g(\sf{p}_ia)$ for $a\in A$. Similarly, using an element from $g(\sf{unit})\cap B^\#$, we define $\sf{unit}'\in B^\#$ such that $\sf{unit}'\cdot g(a)\subseteq g(\sf{unit}\cdot a)\subseteq g([a])$ for all $a\in A$. Moreover, we may define $\sf{ext}'\in B^\#$ such that $\sf{ext}'\cdot g([a_0, \ldots, a_{n-1}])\cdot g(a')\subseteq g([a_0, \ldots, a_{n-1}, a'])$. Using the fixed point operator in $B^\#$, we can construct an element $T\in B^\#$ satisfying:
\begin{align}
Tbv \preceq\ &\sf{if}\ d(\sf{p}'_0(tbv)))\ \sf{then}\ \sf{p}'_1(tbv)\ \sf{else}\ Tb\big(\sf{ext'}v(s(\sf{p}'_1(tbv)))\big).\label{eq:defT}
\end{align}
Suppose that $a,a'\in A$ are such that $a\odot a'\denotes$, and let $u_0, \ldots, u_{n-1}$ be the $a'$-interrogation of $f$ by $a$. First of all, we claim that
\begin{align}\label{eq:induction_step}
T\cdot g(a)\cdot g([a',u_0, \ldots, u_{i-1}]) \preceq T\cdot g(a)\cdot g([a',u_0, \ldots, u_i])
\end{align}
for all $i<n$. Suppose that the right hand side of \cref{eq:induction_step} is defined, and consider $b\in g(a)$ and $v\in g([a',u_0, \ldots, u_{i-1}])$. Then we have $tbv\in g(a\cdot [a',u_0, \ldots, u_{i-1}])$, so 
\[
\sf{p}'_0(tbv)\in g(\sf{p}_0(a\cdot [a',u_0, \ldots, u_{i-1}])) \subseteq g(\bot).\]
This gives $d(\sf{p}'_0(tbv)) \leq \bot$, so we need to evaluate the `else' clause in \cref{eq:defT}. We have $\sf{p}'_1(tbv) \in g(\sf{p}_1(a\cdot [a',u_0, \ldots, u_{i-1}]))$, which gives $s(\sf{p}'_1(tbv))\in g(f(\sf{p}_1(a\cdot [a',u_0, \ldots, u_{i-1}]))) = g(u_i)$. Now $\sf{ext}'v(s(\sf{p}'_1(tbv)))\in g([a',u_0, \ldots, u_i])$, and \cref{eq:defT} tells us that $Tbv\leq Tb\big(\sf{ext'}v(s(\sf{p}'_1(tbv)))\big) \in T\cdot g(a)\cdot g([a',u_0, \ldots, u_i])$, as desired.

Moreover, we have:
\begin{align}\label{eq:endgame}
T\cdot g(a)\cdot g([a',u_0, \ldots, u_{n-1}]) \subseteq g(a\odot a').
\end{align}
Indeed, consider $b\in g(a)$ and $v\in g([a',u_0, \ldots, u_{n-1}])$. Then as above, we find $d(\sf{p}'_0(tbv)))\leq\top$, so \cref{eq:defT} tells us that $Tbv\leq \sf{p}'_1(tbv)\in g(\sf{p}_1(a\cdot [a',u_0, \ldots, u_{n-1}])) = g(a\odot a')$. Combining \cref{eq:induction_step} and \cref{eq:endgame} yields:
\[
T\cdot g(a) \cdot g([a']) \subseteq g(a\odot a')
\]
whenever $a\odot a'\denotes$. We conclude that $\lambda^\ast xy. Tx(\sf{unit}'y)$ tracks $g\colon A[f]\pf B$, which finishes the proof.
\end{proof}

Since identities are c.d., and c.d.\@ partial applicative morphisms are closed under composition, there is a wide subcategory $\sf{pPCA}_\text{cd}$ of $\sf{pPCA}$ consisting of only the c.d.\@ partial applicative morphisms. Similarly, we have the subcategory $\sf{pPCA}_\text{dec}$ consisting of only the decidable morphisms. We define $\sf{PCA}_\text{cd}$ and $\sf{PCA}_\text{dec}$ analogously. The following now easily follows from \cref{thm:universal_Af}.

\begin{cor}\label{cor:universal_Af}
Let $A,B$ be PCAs, let $f\in\mathcal{B}A$ and let $\mathcal{C}$ be any of the preorder-enriched categories $\sf{pPCA}_\textup{cd}, \sf{pPCA}_\textup{dec}, \sf{PCA}_\textup{cd}, \sf{PCA}_\textup{dec}$. Then composition with $\iota_f$:
\[
\mathcal{C}(A[f],B) \to \{g\in\mathcal{C}(A,B)\mid f\mbox{ is effectively representable w.r.t.\@ }g\}
\]
is an equivalence of preorders.
\end{cor}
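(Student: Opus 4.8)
The plan is to deduce the corollary from \cref{thm:universal_Af}, combined with the facts (\cref{prop:iotaf}) that $\iota_f$ is total, c.d.\@ and decidable and has a right adjoint $k\colon A[f]\to A$ with $\iota_f k\simeq\id_{A[f]}$, and with the closure properties of c.d.\@ and decidable morphisms recorded in \cref{prop:properties_properties}. Fix one of the four categories $\mathcal{C}\in\{\sf{pPCA}_\textup{cd},\sf{pPCA}_\textup{dec},\sf{PCA}_\textup{cd},\sf{PCA}_\textup{dec}\}$, write $\Phi\colon\mathcal{C}(A[f],B)\to\mathcal{C}(A,B)$ for the map $h\mapsto h\iota_f$, and let $S\subseteq\mathcal{C}(A,B)$ be the sub-preorder (with the inherited order) of those $g$ with respect to which $f$ is effectively representable. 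Since an equivalence of preorders is the same thing as a monotone, order-reflecting, essentially surjective map, I would verify: (a) $\Phi$ lands in $S$; (b) $\Phi$ is monotone; (c) $\Phi$ is order-reflecting; (d) $\Phi$ is essentially surjective onto $S$.

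Steps (a) and (b) are quick. Since $\iota_f$ is total, c.d.\@ and decidable it lies in $\mathcal{C}(A,A[f])$ in all four cases, and $\mathcal{C}$ is closed under composition, so $h\iota_f\in\mathcal{C}(A,B)$; moreover $f$ is effectively representable w.r.t.\@ $\iota_f$ (\cref{prop:iotaf}) and effective representability transfers along postcomposition with any partial applicative morphism, so $f$ is effectively representable w.r.t.\@ $h\iota_f$, i.e.\@ $\Phi(h)\in S$. Monotonicity of $\Phi$ is immediate from the preorder-enrichment of $\sf{pPCA}$ (\cref{prop:pPCA}), the hom-preorders of $\mathcal{C}$ carrying the inherited order. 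For (c), suppose $h,h'\in\mathcal{C}(A[f],B)$ and $h\iota_f\leq h'\iota_f$; composing this inequality with $k$ on the right (i.e.\@ passing to $(h\iota_f)k$ and $(h'\iota_f)k$) and using monotonicity of composition gives $(h\iota_f)k\leq (h'\iota_f)k$, that is $h(\iota_f k)\leq h'(\iota_f k)$, and since $\iota_f k\simeq\id_{A[f]}$ this reads $h\leq h'$.

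For (d), let $g\in S$. Every morphism of $\mathcal{C}$ is decidable (a c.d.\@ morphism being decidable by \cref{prop:properties_properties}(i)), so \cref{thm:universal_Af} applies: viewing $g$ as a function $A\to DB$ and reinterpreting it as a morphism $\tilde g\colon A[f]\pf B$, the triangle there commutes up to isomorphism, i.e.\@ $\tilde g\iota_f\simeq g$. It remains to check $\tilde g\in\mathcal{C}(A[f],B)$. If $\mathcal{C}$ is one of the c.d.\@ (resp.\@ decidable) variants, then $g$, and hence $\tilde g\iota_f$, is c.d.\@ (resp.\@ decidable) by \cref{prop:properties_properties}(ii), and then $\tilde g$ is c.d.\@ (resp.\@ decidable) by \cref{prop:properties_properties}(iv). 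If $\mathcal{C}$ is one of the $\sf{PCA}$-variants, then $g$ is total, and $\tilde g$, being literally the same function $A\to DB$ as $g$, is total as well. So $\tilde g\in\mathcal{C}(A[f],B)$ and $\Phi(\tilde g)=\tilde g\iota_f\simeq g$, establishing (d).

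There are no real surprises here: the content is carried by \cref{thm:universal_Af}, and the main thing to be careful about is the bookkeeping in step (d) — making sure the factorization produced by that theorem does not leave whichever of the four subcategories one is working in, which is exactly what parts (ii) and (iv) of \cref{prop:properties_properties} are for. The only structural input beyond \cref{thm:universal_Af} that is genuinely needed is the right adjoint $k$ with $\iota_f k\simeq\id_{A[f]}$, which powers the order-reflection in step (c).
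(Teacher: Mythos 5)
Your proof is correct and follows essentially the same route as the paper: essential surjectivity from \cref{thm:universal_Af} (with the subcategory bookkeeping via \cref{prop:properties_properties}), and order reflection from the right pseudoinverse of $\iota_f$ supplied by \cref{prop:iotaf}. You simply spell out the details that the paper's two-line proof leaves implicit.
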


\begin{proof}
\cref{thm:universal_Af} readily implies that for each of the $\mathcal{C}$, the map above is essentially surjective. Moreover, composition with $\iota_f$ reflects the order since $\iota_f$ has a right pseudoinverse as shown in \cref{prop:iotaf}.
\end{proof}

\begin{ex}
The construction of $A[f]$ is a generalization of oracle computations for classical Turing computability. Indeed, if $f\colon\mathbb{N}\pf \mathbb{N}$ is a partial function, then $g\colon \mathbb{N}\pf\mathbb{N}$ is effectively representable w.r.t.\@ $\iota_f\colon\mathcal{K}_1\to\mathcal{K}_1[f]$ iff $g$ is Turing computable relative to an oracle for $f$. See also Corollary 2.3 in \cite{Af}.
\end{ex}

\begin{ex}\label{ex:r_is_rhat}
As we mentioned at the beginning, the construction from this section can be seen as a higher-order version of the construction from \cref{ex:iota_r}. On the other hand, the construction of $A[r]$ can be seen as a special case of the construction $A[f]$. Indeed, consider $r\in A$ and denote the constant function with value $r$, which is an element of $\mathcal{B}A$, by $\hat{r}$. It is easy to see that, for any $g\colon A\pf B$, we have that $\hat{r}$ is effectively representable w.r.t.\@ $g$ iff $r\in\dom^\# g$. It follows that $A[r]$ and $A[\hat{r}]$ are equivalent PCAs.
\end{ex}

\begin{ex}\label{ex:Af_to_Ar}
Of course, if $f\in\mathcal{B}A$ is already effectively representable in $A$ itself, then $\iota_f\colon A\to A[f]$ will be an isomorphism of PCAs. Now suppose that $f$ is represented by an element $r\in A$ (but not necessarily $r\in A^\#$). Then $f$ is effectively representable w.r.t.\@ $\iota_r\colon A\to A[r]$, so we get a factorization:
\[\begin{tikzcd}
A \arrow[d, "\iota_f"'] \arrow[rd, "\iota_r"] &        \\
{A[f]} \arrow[r, dashed]                      & {A[r]}
\end{tikzcd}\]
It is worth observing that the mediating arrow $A[f]\to A[r]$ is \emph{not}, in general, an isomorphism. Indeed, consider, e.g., a PCA with $A^\#\neq A$, and take a $b\in A\backslash A^\#$. Then the partial function $f\in\mathcal{B}A$ defined by 
\[
f(a) = \begin{cases}
\sf{p}_1a &\mbox{if }\sf{p}_0a\leq\top;\\
\mbox{undefined} &\mbox{else}
\end{cases}
\]
is effectively representable, e.g., by $\sf{p}_1\in A^\#$. This means that $\iota_f$ is an isomorphism. But it is \emph{also} representable by $r:=\lambda^\ast x.\sf{if}\ \sf{p}_0x\ \sf{then}\ \sf{p}_1x\ \sf{else}\ b$. Moreover, we have $r\not\in A^\#$, because $r(\sf{p}\bot\bot)\leq b$, so $r\in A^\#$ would imply $b\in A^\#$. This means that $\iota_r$ is \emph{not} an isomorphism, so $A[f]\to A[r]$ cannot be an isomorphism either. So we see that the point here is, really, that a function $f\in\mathcal{B}A$ can have many representers.
\end{ex}


\section{The PCA of partial functions}\label{sec:BA}

In this section, we show how to turn the set $\mathcal{B}A$ from \cref{sec:ROPCA} into a PCA. As the order, we use the order defined in \cref{defn:BA_as_poset}(ii). In particular, the empty function is a largest element of $\mathcal{B}A$. It is worth noting that, in contrast with $A[f]$, the order on $\mathcal{B}A$ is \emph{not} discrete even if the order on $A$ is. Indeed, if $A$ is discrete, then $\mathcal{B}A$ consists of all partial functions $A\pf A$, and the order is the \emph{reverse} subfunction relation. In this case, the total functions are minimal elements of $\mathcal{B}A$. In the general case, the total functions in $\mathcal{B}A$ form a downwards closed set.

The application on $\mathcal{B}A$ will, in a sense, generalize the $A[f]$ for $f\in\mathcal{B}A$ all at once. As for the construction of $A[f]$, we need the assumption that $A$ is not semitrivial. An important thing to note about the application on $\mathcal{B}A$ is that it will be \emph{total}.

Let us define the application now. For $\alpha,\beta\in\mathcal{A}$ and $a,b\in A$, we say that $\alpha\beta(a) = b$ if and only if there are $u_0, \ldots, u_{n-1}$ such that:
\begin{itemize}
\item	for all $i<n$, we have $\sf{p}_0\cdot\alpha([a,u_0, \ldots, u_{i-1}]) \leq \bot$ and $\beta(\sf{p}_1\cdot\alpha([a,u_0, \ldots, u_{i-1}])) = u_i$;
\item	$\sf{p}_0\cdot\alpha([a,u_0, \ldots, u_{n-1}]) \leq \top$ and $\sf{p}_1\cdot\alpha([b,u_0, \ldots, u_{i-1}]) = b$.
\end{itemize}
By the assumption that $A$ is nontrivial, the sequence $u_0, \ldots, u_{n-1}$ is unique if it exists, and if it exists it is called the $a$-interrogation of $\beta$ by $\alpha$. The following table compares the definition of $\alpha\beta(a)$ with the definition of $a\odot_f b$ from \cref{defn:Af_PAP}.
\begin{center}
\begin{tabular}{|l|c|c|}
\hline & $a\odot_f b$ & $\alpha\beta(a)$\\
\hline Interrogator & $a$ & $\alpha$ \\
\hline Input & $b$ & $a$ \\
\hline Oracle & $f$ & $\beta$ \\
\hline
\end{tabular}
\end{center}
The above defines $\alpha\beta$ as a partial function $A\pf A$; we leave it to the reader to check that $\alpha\beta$ is actually in $\mathcal{B}A$, and that this application makes $\mathcal{B}A$ into a PAP. The argument is very similar to the proof that $A[f]$ is a PAP. It should be noted that, even though $\alpha\beta$ is always defined, $\alpha\beta(a)$ could be undefined for the same reasons $a\odot_f b$ could be undefined. Moreover, a remark similar to \cref{rem:original_Af} applies: the PCA $\mathcal{B}A$ was introduced in \cite{PCAfunctions} for discrete PCAs, where one finds a definition of application involving equalities between elements of $A$. Again, this version of the definition is not suitable for the ordered case.

For $(\mathcal{B}A)^\#$, we take the set of all effectively representable functions from $\mathcal{B}A$. By \cref{rem:rep}, this is equivalent to saying:
\[
(\mathcal{B}A)^\# = \upset\{\lambda a.ra\mid r\in A^\#\}.
\]
This set is clearly upwards closed, so we need to check that it also closed under the application defined above. This requires a bit more work. Suppose that we have $\rho,\sigma\in (\mathcal{B}A)^\#$, and take $r,s\in A^\#$ such that $ra\preceq\rho(a)$ and $sa\preceq \sigma(a)$. Using the fixpoint operator, we may find a $T\in A^\#$ such that
\begin{align*}
Txyu \preceq\ \sf{if}\ \sf{p}_0(xu) \ \sf{then}\ \sf{p}_1(xu) \ \sf{else}\ Txy(\sf{ext}\cdot u\cdot (y(\sf{p}_1u))),
\end{align*}
where $\sf{ext}\in A^\#$ is as in \cref{defn:rij}. Now it is not hard to check that $Trs\cdot[a]\preceq \rho\sigma(a)$, which means that $\rho\sigma$ is represented by $\lambda^\ast x. Trs(\sf{unit}\cdot x)\in A^\#$. We can conclude that $\rho\sigma\in A^\#$ as well, so $\mathcal{B}A$ is a PAS.

Showing that $\mathcal{B}A$ is a PCA is rather involved, and we will not provide all the details here. In fact, the $\sf{k}$-combinator is still easy: using operations on sequences, one may construct an $r\in A^\#$ such that:
\begin{itemize}
\item	$r\cdot [[[a]]] \leq \sf{p}\bot a$;
\item	$r\cdot [[[a],b]] \leq \sf{p}\top(\sf{p}\top(\sf{p}\top b))$.
\end{itemize}
Then $\boldsymbol{\kappa}\in(\mathcal{B}A)^\#$ defined by $\boldsymbol{\kappa}(a)\simeq ra$ will satisfy $\boldsymbol{\kappa}\alpha\beta(a)\preceq\alpha(a)$, hence $\boldsymbol{\kappa}\alpha\beta\leq\alpha$. The $\sf{s}$-combinator $\boldsymbol{\sigma}\in(\mathcal{B}A)^\#$ can be constructed in the following way. For $\alpha,\beta\in\mathcal{B}A$, the function $\boldsymbol{\sigma}\alpha\beta$ should be an interrogator that, when provided with oracle $\gamma$, works as follows. Suppose that $\alpha\gamma(\beta\gamma)(a)$ is defined with interrogation sequence $u_0,\ldots, u_{n-1}$. On input $[a]$, it first simulates the computation of $\alpha\gamma([a])$. When finished, it checks whether $\sf{p}_0\cdot(\alpha\gamma([a]))$ holds. If so, it can output $\alpha\gamma([a])$. If not, it proceeds to simulate, again using the oracle $\gamma$, the computation $\beta\gamma(\sf{p}_1\cdot \alpha\gamma([a]))$, thus finding $u_0$. Then it proceeds to simulate the computation of $\alpha\gamma([a,u_0])$. It keeps going back and forth between simulating a computation instances of $\alpha\gamma$ and $\beta\gamma$, reconstruction the entire sequence $u_0, \ldots, u_{n-1}$. Then it finally finds that $\sf{p}_0\cdot \alpha\gamma([a,u_0, \ldots, u_{n-1}])$ holds, and it can give the correct output. This means that the task is to construct a $\boldsymbol{\sigma}$ that, when fed oracles for $\alpha$ and $\beta$, produces such an interrogator $\boldsymbol{\sigma}\alpha\beta$. Details on how to do this may be found in \cite{PCAfunctions}; it does not show that $\boldsymbol{\sigma}$ is an effective computation, but the constructions involved are clearly effective computations in $A$. See also the following remark, however.

\begin{rem}\label{rem:why_order}
The paper \cite{PCAfunctions} treats $\mathcal{B}A$ as an absolute, discrete PCA. In this paper, on the other hand, we want to consider $\mathcal{B}A$ as a relative PCA, with $(\mathcal{B}A)^\#$ consisting of the effectively representable functions. By doing so, the applicative morphism $i$ in \cref{prop:constant} below becomes compuationally dense, so we have a geometric morphism $\rt(\mathcal{B}A)\to\rt(A)$, which is even a surjection (\cref{prop:surjection}). Moreover, it allows us to apply the construction from \cref{ex:iota_r} to $\mathcal{B}A$ in a nontrivial way, which is essential for the new topos-theoretic construction of $\rt(A[f])$ described below.

The move to relativity also \emph{forces} us to view $\mathcal{B}A$ as an ordered PCA. Indeed, we do not have that $(\mathcal{B}A,(\mathcal{B}A)^\#,\cdot,=)$ is always a PCA, even if $A$ itself is discrete. What we have, is an effectively representable $\boldsymbol{\sigma}$ such that: if $\alpha\gamma(\beta\gamma)(a)$ is defined, then $\boldsymbol{\sigma}\alpha\beta\gamma(a)$ is also defined  with (in the discrete case) the same value. However, if $\mathcal{B}A$ is equipped with the discrete order, then $\alpha\gamma(\beta\gamma)$ and $\boldsymbol{\sigma}\alpha\beta\gamma$ should be the \emph{same} function. And this is not automatically true. Indeed, one of the reasons $\alpha\gamma(\beta\gamma)$ could turn out undefined, is that a certain intermediate result that should be a boolean, is not in fact a boolean. When simulating computations, $\boldsymbol{\sigma}$ feeds such expressions to the if-then-else operator, and this may very well yield an unintended result. One could remedy this by setting all such unintended results to undefined, or to an output that sends the computation of $\boldsymbol{\sigma}\alpha\beta\gamma(a)$ into an infinite loop, but one cannot hope, in general, that this keeps $\boldsymbol{\sigma}$ effective. It may \emph{happen} to be possible, of course. The most prominent example is Kleene's first model $\mathcal{K}_1$, which allows a nice coding of booleans for which this problem does not arise.

The main focus of \cite{PCAfunctions} is actually not $\mathcal{B}A$, but a related PCA $\mathcal{K}_2A$ whose elements are the \emph{total} functions $A\to A$. The application is defined in the same way, except that $\alpha\beta$ is only defined if $\alpha\beta(a)$ is defined for all $a\in A$. Then the problem described above does not arise, since all the functions involved are total. But there is another problem, namely that the $\boldsymbol{\sigma}$ described above will clearly be \emph{partial}. In order to make $\boldsymbol{\sigma}$ into an element of $\mathcal{K}_2A$, we need to extend it to a total function, and once again, this can not necessarily be done in an effective way. Kleene's first model again forms an exception; indeed, it is well known that Kleene's \emph{second} model, which is $\mathcal{K}_2(\mathcal{K}_1)$, has a relative version.
\end{rem}

For $a\in A$, let $\hat{a}\in\mathcal{B}A$ denote the constant function with value $a$.
\begin{prop}\label{prop:constant}
There is a total c.d.\@ applicative morphism $i\colon A\to \mathcal{B}A$, defined by $i(a) = \downset\{\hat{a}\}$.
\end{prop}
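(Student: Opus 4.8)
The plan is to verify, in order, that $i$ is a well-defined function into $D(\mathcal{B}A)$, that it satisfies the three axioms of a (total) partial applicative morphism, and finally that it is computationally dense. For well-definedness, I note that $\downset\{\hat{a}\}$ is trivially a non-empty downset of $\mathcal{B}A$, so totality is immediate and axiom~1 reduces to checking that for $a \in A^\#$, the downset $\downset\{\hat{a}\}$ meets $(\mathcal{B}A)^\#$: since $(\mathcal{B}A)^\# = \upset\{\lambda a.ra \mid r \in A^\#\}$ and $\hat{a} = \lambda x.(\kbar a)x$ up to the inequality $\kbar ab \leq b$, actually more simply $\hat a \geq \lambda x.(\kbar a x)$ — wait, we want $\hat a$ itself to be $\geq$ some effectively representable function, and $\lambda x.\kbar a x \leq \hat a$ since $\kbar a x \leq a$... hmm, I need $\hat a$ to be \emph{above} an effectively representable function, i.e.\ $\hat a \in \upset\{\lambda x.rx\}$. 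Taking $r = \kbar a \in A^\#$ (as $a \in A^\#$), the function $\lambda x.rx$ sends $x \mapsto \kbar a x \leq a$, so $\lambda x.rx \leq \hat a$, hence $\hat a \in (\mathcal{B}A)^\#$. Good; so in fact $i(a) \cap (\mathcal{B}A)^\# \neq \emptyset$ whenever $a \in A^\#$.

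For the order-preservation axiom (axiom~3), I need a $\boldsymbol{u} \in (\mathcal{B}A)^\#$ with $\boldsymbol{u}\cdot i(a) \subseteq i(a')$ whenever $a \leq a'$, i.e.\ $\boldsymbol{u}\hat{a} \leq \hat{a'}$. Since $a \leq a'$ implies $\hat a \leq \hat{a'}$ already, it suffices to find $\boldsymbol{u}$ with $\boldsymbol{u}\hat a \leq \hat a$ for all $a$; one can take $\boldsymbol{u}$ to be (a function representing) the identity interrogator, e.g.\ $\boldsymbol{u} = \lambda \gamma.\gamma$ realized appropriately — more precisely $\boldsymbol{u} = \boldsymbol{\sigma}\boldsymbol{\kappa}\boldsymbol{\kappa}$ or simply observe that $\mathcal{B}A$ being a PCA has an identity combinator $\boldsymbol{\iota} \in (\mathcal{B}A)^\#$ with $\boldsymbol{\iota}\alpha \leq \alpha$, and set $\boldsymbol{u} = \boldsymbol{\iota}$. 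For the tracker (axiom~2), I need a $\boldsymbol{t} \in (\mathcal{B}A)^\#$ such that $ab\denotes$ implies $\boldsymbol{t}\hat{a}\hat{b} \leq \widehat{ab}$. The interrogator $\boldsymbol{t}\hat a \hat b$, given any oracle $\gamma$ and any input, should ignore the oracle and the input-sequence structure, extract the constant values $a$ and $b$, and output $ab$; concretely $\boldsymbol{t}$ is represented by an $r \in A^\#$ that on a suitably coded input produces $\sf{p}\top(\cdots)$ immediately, using that $\hat a([\ldots]) = a$ always. This is the same kind of calculation as the construction of $t_f$ in \cref{sec:order1} (indeed $\boldsymbol{t}$ plays the role of $t_f$ here), and I would cite that analogy rather than write it out.

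The computational density is the substantive part. I need an $\boldsymbol{n} \in (\mathcal{B}A)^\#$ such that for every $\boldsymbol{s} \in (\mathcal{B}A)^\#$ there is an $r \in A^\#$ with $\boldsymbol{n}\cdot i(r) \subseteq \downset\{\boldsymbol{s}\}$, i.e.\ $\boldsymbol{n}\hat{r} \leq \boldsymbol{s}$. The idea is: given $\boldsymbol{s} \in (\mathcal{B}A)^\#$, pick $r_s \in A^\#$ representing $\boldsymbol{s}$, so $r_s a \preceq \boldsymbol{s}(a)$ for all $a$. Then take $r = r_s$ (or a small modification), and let $\boldsymbol{n}$ be the interrogator that, on oracle $\gamma$, first queries $\gamma$ once to obtain the constant value $r_s$ it carries — well, $\hat{r}$ returns $r_s$ on every query — and then, on input $a$, simulates the application $r_s \cdot a$ inside $A$ and outputs it. Since $r_s a \preceq \boldsymbol{s}(a)$, this gives $\boldsymbol{n}\hat{r} \leq \boldsymbol{s}$ pointwise, as required. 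The point is that $\boldsymbol{n}$ itself is a single fixed effectively representable function: it is represented by an $r \in A^\#$ which, on a coded input $[\ldots]$, first emits one interrogation step to read off the carried value, then on the next round applies that value to the original input and wraps it in $\sf{p}\top$. The main obstacle is purely bookkeeping: getting the coding of the interrogation protocol right so that $\boldsymbol{n}$ reads the constant $r$ off the oracle $\hat r$ before it has even seen the input, or alternatively restructuring $\boldsymbol{n}$ so it interrogates after receiving the input — either works, but one must be careful that $\boldsymbol{n}$ does not depend on $\boldsymbol{s}$ and that the bound $\preceq$ (not $=$) survives, exactly the subtlety flagged in \cref{rem:why_order}. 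I expect this to be a few lines once the protocol is fixed, parallel to \cref{ex:iota_r_cd}.
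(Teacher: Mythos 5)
Your overall strategy is the paper's own: represent $\hat a$ by a constant combinator for axiom~1, use order-preservation on the nose for axiom~3, and build the tracker and the c.d.\@ witness as interrogators that query their oracles once to read off the carried constants. The computational density argument in particular matches the paper's almost verbatim (the paper takes $n\in A^\#$ with $n\cdot[x]\leq \sf{p}\bot\sf{i}$ and $n\cdot[x,y]\preceq\sf{p}\top(yx)$, which is exactly your ``query once, then apply the answer to the input'' protocol, with the query coming after the input is received). However, two concrete points need repair. First, $\kbar a$ does \emph{not} represent $\hat a$: the paper's combinator satisfies $\kbar ab\leq b$, so $\kbar ax\leq x$ and $\lambda x.\kbar ax$ approximates the identity, not the constant function $a$. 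You want $\sf{k}a$, for which $\sf{k}ax\leq a$, whence $\lambda x.\sf{k}ax\leq\hat a$ and $\hat a\in(\mathcal{B}A)^\#$.

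Second, your tracker sketch is self-contradictory and, taken literally, would fail. An interrogator that ``ignores the oracle'' and ``produces $\sf{p}\top(\cdots)$ immediately'' yields a function $\boldsymbol{t}\hat a\hat b$ that cannot depend on $a$ or $b$ at all, since the \emph{only} access an interrogator has to $\hat a$ and $\hat b$ is through oracle queries. The analogy with $t_f$ from \cref{sec:order1} is misleading on precisely this point: there $a$ and $b$ arrive as coded inputs and can be read off with $\sf{fst}$, whereas here they must be extracted by emitting a $\sf{p}\bot$-step in each of the two nested interrogations --- one query against $\hat a$ inside the computation of $\boldsymbol{t}\hat a$, and one against $\hat b$ in the outer computation of $(\boldsymbol{t}\hat a)\hat b(c)$. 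The paper's tracker does exactly this: it takes $t\in A^\#$ with $t\cdot[[x]]\leq\sf{p}\top(\sf{p}\bot\sf{i})$, $t\cdot[[x,y]]\leq\sf{p}\bot\sf{i}$ and $t\cdot[[x,y],z]\preceq\sf{p}\top(\sf{p}\top(zy))$, so that the outer interrogation first fetches $b$ from $\hat b$, the inner one then fetches $a$ from $\hat a$, and only then is $ab$ formed. Once these two points are fixed, the rest of your argument goes through as in the paper.
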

\begin{proof}
If $a\in A^\#$, then $\hat{a}$ is clearly represented by $\sf{k}a\in A^\#$, so $\hat{a}\in (\mathcal{B}A)^\#$, meaning that $i$ satisfies the first requirement. A similar argument shows that $i$ is total. Moreover, $i$ clearly preserves the order on the nose, so in order to show it is an applicative morphism, it remains to construct a tracker. Using the elementary operations on sequences, we may contruct a $t\in A^\#$ satisfying:
\begin{itemize}
\item	$t\cdot [[x]]\leq \sf{p}\top(\sf{p}\bot\sf{i})$;
\item	$t\cdot [[x,y]] \leq \sf{p}\bot\sf{i}$;
\item	$t\cdot [[x,y],z] \preceq \sf{p}\top(\sf{p}\top(zy))$.
\end{itemize}
If $\tau\in(\mathcal{B}A)^\#$ is defined by $\tau(a)\simeq ta$, then it is straightforward to check that $\tau\hat{a}\hat{b}(c)\preceq ab$ for all $a,b,c\in A$, so $\tau$ is a tracker for $i$.

For computational density, we take an $n\in A^\#$ satisfying:
\begin{itemize}
\item	$n\cdot [x]\leq \sf{p}\bot \sf{i}$;
\item	$n\cdot [x,y] \preceq \sf{p}\top(yx)$.
\end{itemize}
If $\nu\in(\mathcal{B}A)^\#$ is defined by $\nu(a)\simeq na$, then it is easy to check that $\nu\hat{r}(a)\preceq ra$ for all $a,r\in A$. So, if $\rho\in(\mathcal{B}A)^\#$ is represented by $r\in A^\#$, then $\nu\hat{r}\leq \rho$, so $\nu\cdot i(r) \subseteq \downset\{\rho\}$, showing that $\nu$ satisfies \cref{eq:cd}.
\end{proof}

Since $i$ is c.d., it is decidable as well, which means that $\widehat{\top},\widehat{\bot}$ can serve as booleans in $\mathcal{B}A$. In particular, $\mathcal{B}A$ is not semitrivial, given that $A$ is not semitrivial.

The fact that $i$ is c.d.\@ is the main reason for viewing $\mathcal{B}A$ as a relative PCA. If we consider $\mathcal{B}A$ as an absolute PCA, then the corresponding result does not hold for cardinality reasons. Indeed, one might say that `there are more functions than respresenters'. The computational density of $i$ means that we get a geometric morphism $\rt(\mathcal{B}A)\to\rt(A)$. In fact, the right adjoint of $i$ must already exist at the level of PCAs, since $i$ is also total and projective. We will construct this right adjoint explicitly.

As in \cite{PCAfunctions}, the PCA $\mathcal{B}A$ has the property that every element of $\mathcal{B}A$ is representable w.r.t.\@ $i\colon A\to \mathcal{B}A$.
\begin{prop}\label{prop:everything_rep}
Let $A$ be a PCA. Then every $\alpha\in \mathcal{B}A$ is representable w.r.t.\@ $i\colon A\to \mathcal{B}A$.
\end{prop}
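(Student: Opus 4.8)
The plan is, for each $\alpha\in\mathcal{B}A$, to write down a single element $\sigma\in\mathcal{B}A$ representing $\alpha$ with respect to $i$, built by baking the (fixed) partial function $\alpha$ \emph{directly} into the definition of $\sigma$. First I would unwind what the claim asks. Since $i(a)=\downset\{\hat a\}$ and the application of $\mathcal{B}A$ is total and order-preserving, we have $\sigma\cdot i(a)=\downset\{\sigma\hat a\}$ (recall the application of $D(\mathcal{B}A)$ from \cref{ex:DA}); so ``$\sigma$ represents $\alpha$ w.r.t.\ $i$'' amounts to $\downset\{\sigma\hat a\}\subseteq i(\alpha(a))=\downset\{\widehat{\alpha(a)}\}$ for every $a\in\dom\alpha$, i.e.\ $\sigma\hat a(c)\preceq\alpha(a)$ for all $c\in A$ and all $a$ with $\alpha(a)\denotes$. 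The idea for $\sigma$ is then transparent: viewed as an interrogator, $\sigma$ makes exactly one call to its oracle $\hat a$; since $\hat a$ is the constant function $a$, that call hands $\sigma$ the value $a$, and $\sigma$ can then halt with output $\alpha(a)$. The point is that $\sigma$ is itself just a partial function on $A$, so there is no need for $\alpha$ to be \emph{effectively} representable for this to make sense --- this is exactly the gap between \cref{prop:everything_rep} and the description of $(\mathcal{B}A)^\#$.

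Concretely, I would define $\sigma\colon A\pf A$ using the sequence combinators of \cref{defn:rij}: on an input $x$ with $\sf{lh}\,x\leq\overline 1$ put $\sigma(x)=\sf{p}\bot\sf{i}$ (a ``continue'' verdict together with an arbitrary query value), and on an input $x$ with $\sf{lh}\,x\geq\overline 2$ put $\sigma(x)\simeq\sf{p}\top\bigl(\alpha(\sf{read}\,x\,\overline 1)\bigr)$ (a ``halt'' verdict together with $\alpha$ applied to the second entry), the two cases being distinguished by $\sf{zero}(\sf{pred}(\sf{lh}\,x))$, with $\sigma(x)$ undefined when this fails to be a subboolean. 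The first thing to verify is $\sigma\in\mathcal{B}A$, i.e.\ that $\dom\sigma$ is downwards closed and $\sigma$ is order-preserving. This is routine: every ingredient is either a combinator, hence order-preserving with downwards-closed domain by axiom~(A), or is $\alpha$ itself, which lies in $\mathcal{B}A$ by hypothesis; so $x'\leq x$ forces $\sf{lh}\,x'\preceq\sf{lh}\,x$ (hence the same branch is selected for $x'$, possibly with a smaller value), and in the second branch also $\sf{read}\,x'\,\overline 1\preceq\sf{read}\,x\,\overline 1$, whence $\alpha(\sf{read}\,x'\,\overline 1)\preceq\alpha(\sf{read}\,x\,\overline 1)$.

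It then remains to verify the representation property. Fixing $a\in\dom\alpha$ and $c\in A$, I would run the $c$-interrogation of $\hat a$ by $\sigma$. The first step computes $\sigma([c])=\sf{p}\bot\sf{i}$ (as $\sf{lh}\,[c]\leq\overline 1$); so its $\sf{p}_0$-part is $\leq\bot$, the computation queries the oracle, and $\hat a$ returns $a$, giving the unique interrogation coefficient $u_0=a$. The second step computes $\sigma([c,a])=\sf{p}\top\bigl(\alpha(\sf{read}\,[c,a]\,\overline 1)\bigr)$ (as $\sf{lh}\,[c,a]\leq\overline 2$); since $\sf{read}\,[c,a]\,\overline 1\leq a$ while $\alpha\in\mathcal{B}A$ has $a$ in its downwards-closed domain, this element is defined, its $\sf{p}_0$-part is $\leq\top$, and its $\sf{p}_1$-part is $\leq\alpha(a)$. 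Hence $\sigma\hat a(c)$ is defined and $\leq\alpha(a)$, as required, so $\sigma\cdot i(a)=\downset\{\sigma\hat a\}\subseteq i(\alpha(a))$. There is no serious obstacle here; the only point needing a little care is the passage between the equalities appearing in the definition of the application of $\mathcal{B}A$ (and of an interrogation) and the $\leq$-inequalities actually available in an ordered PCA, which is handled exactly as in the $\mathcal{B}A$-analogue of \cref{rem:original_Af} --- and in fact is especially harmless here because the oracle $\hat a$ is constant, so the coefficient $u_0=a$ is pinned down on the nose.
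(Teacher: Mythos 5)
Your proof is correct, and the computational idea at its heart --- an interrogator that makes a single call to the constant oracle $\hat a$ to extract $a$ and then halts with $\alpha(a)$ --- is exactly the one the paper uses. The difference is in how that interrogator is packaged. You bake the fixed $\alpha$ directly into a partial function $\sigma\in\mathcal{B}A$ defined at the meta-level, producing one representer per $\alpha$, and you correctly observe that $\sigma$ need not lie in $(\mathcal{B}A)^\#$, which is why the statement only claims representability rather than effective representability. The paper instead constructs a single \emph{uniform} element $\rho\in(\mathcal{B}A)^\#$ (given by $\rho(a)\simeq ra$ for an $r\in A^\#$ acting on coded sequences) such that $\rho\alpha\hat a\preceq\widehat{\alpha(a)}$ for \emph{all} $\alpha$ and $a$ simultaneously; the non-effectiveness is then isolated entirely in the application $\rho\alpha$, where $\rho$ consults $\alpha$ as an oracle to compute $\alpha$ of the second entry rather than applying $\alpha$ at the meta-level as you do. Both arguments prove the proposition as stated, but the uniform, effective $\rho$ is what the paper actually reuses later: the maximality clause in the proof of \cref{thm:extension_BA} picks an element of $h'(\rho)\cap B^\#$, and the factorization $\rho f\in(\mathcal{B}A[f])^\#$ at the end of \cref{sec:BA} depends on $\rho$ being a single element of the filter applicable to any $f$. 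So while your argument is a valid and slightly more elementary proof of the literal statement, it would not support those downstream uses without being upgraded to the uniform version --- an upgrade that is easy here (replace the meta-level application of $\alpha$ by an oracle query to $\alpha$, as in your own second branch), but worth making explicit.
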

\begin{proof}
Construct an $r\in A^\#$ satisfying:
\begin{itemize}
\item	$r\cdot [[x]]\leq \sf{p}\top(\sf{p}\bot\sf{i})$;
\item	$r\cdot [[x,y]]\leq \sf{p}\bot y$;
\item	$r\cdot [[x,y],z] \leq \sf{p}\top(\sf{p}\top z)$,
\end{itemize}
and let $\rho\in (\mathcal{B}A)^\#$ be defined by $\rho(a)\simeq ra$. Then it is easily checked that $\rho \alpha\hat{a}\preceq \widehat{\alpha(a)}$ for all $\alpha\in\mathcal{B}A$ and $a\in A$. It follows that $\rho \alpha$ represents $\alpha$ w.r.t.\@ $i$, for all $\alpha\in\mathcal{B}A$.
\end{proof}

The following result also appears as Proposition 5.1 in \cite{PCAfunctions}. Since we have introduced the concept of a \emph{partial} applicative morphism, we can formulate the result here in a nicer way.

\begin{thm}\label{thm:extension_BA}
Let $g\colon A\pf B$ be a decidable partial applicative morphism. Then there exists a largest partial applicative morphism $h\colon \mathcal{B}A\pf B$ such that $hi\simeq g$, and it is explicitly defined by:
\[
h(\alpha) = \{b\in B\mid b\mbox{ represents }\alpha\mbox{ w.r.t.\@ }g\} = \{b\in B\mid \forall a\in A\s (\alpha(a)\denotes\ \to b\cdot g(a)\subseteq g(\alpha(a)))\}.
\]
\end{thm}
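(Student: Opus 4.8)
The plan is to prove \cref{thm:extension_BA} in several stages: first show that the proposed $h$ is a partial applicative morphism, then that $hi \simeq g$, and finally that it is the largest such morphism.

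\textbf{Step 1: $h$ is a partial applicative morphism.} First I would observe that $h(\alpha)$ is a downset of $B$, since if $b' \leq b$ and $b$ represents $\alpha$ w.r.t.\@ $g$, then using the realizer $u$ for order-preservation of $g$ together with axiom (A), $b'$ represents $\alpha$ w.r.t.\@ $g$ as well (one needs $\lambda^\ast x. u(b'x)$ or similar, composed appropriately; in any case the computation is effective). For requirement 1 ($h(\alpha) \cap B^\# \neq \emptyset$ for $\alpha \in (\mathcal{B}A)^\#$), suppose $\alpha$ is effectively represented by $r \in A^\#$; then picking $b_0 \in g(r) \cap B^\#$ and composing with a tracker $t$ of $g$, the element $\lambda^\ast x. t b_0 x \in B^\#$ should represent $\alpha$ w.r.t.\@ $g$: indeed $tb_0 \cdot g(a) \subseteq g(ra) \subseteq g(\alpha(a))$ using that $g$ preserves the order and $ra \preceq \alpha(a)$. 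The hardest part of Step 1 is constructing a \emph{tracker} for $h$, i.e.\@ a $t_h \in B^\#$ such that if $b$ represents $\alpha$ and $b'$ represents $\beta$, then $t_h b b'$ represents $\alpha\beta$. This mirrors the construction of $\boldsymbol{\sigma}$ and the tracker arguments in \cref{thm:universal_Af}: one uses the fixpoint operator in $B^\#$ to build $t_h$ that, on input from $g(a)$, simulates the interrogation defining $\alpha\beta(a)$ — repeatedly applying $b$ to the coded sequence built so far (transferred along $g$, using a decider $d$ to read off the subboolean $\sf{p}_0$), and when $\sf{p}_0 \leq \bot$, feeding $\sf{p}_1$ through $b'$ to get the next oracle value, using $g$-transferred versions of the sequence combinators $\sf{ext}, \sf{unit}$, etc. An induction on the length of the interrogation, exactly parallel to \cref{eq:induction_step} and \cref{eq:endgame}, shows $t_h b b' \cdot g(a) \subseteq g(\alpha\beta(a))$ whenever $\alpha\beta(a)\denotes$. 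For order-preservation of $h$: if $\alpha \leq \beta$ (so $\alpha(a) \preceq \beta(a)$ everywhere) and $b$ represents $\beta$, then $b$ already represents $\alpha$ (since $\alpha(a)\denotes$ implies $\beta(a)\denotes$ with $\alpha(a) \leq \beta(a)$, and $g$ preserves the order), so $h$ preserves the order on the nose and $\sf{i}_B$ works.

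\textbf{Step 2: $hi \simeq g$.} By \cref{prop:everything_rep}, every $\alpha \in \mathcal{B}A$ is representable w.r.t.\@ $i$, and in particular $\hat a$ is represented w.r.t.\@ $i$ by $\rho\hat a$ where $\rho \in (\mathcal{B}A)^\#$ is the fixed representer from that proof; this gives the inequality in one direction. Concretely, to compute $hi(a) = \bigcup_{\gamma \in i(a)} h(\gamma) = h(\hat a)$, I need $h(\hat a) \simeq g(a)$ as downsets-up-to-a-realizer. For $g(a) \subseteq$ (something in) $h(\hat a)$: given $c \in g(a)$, the element $\lambda^\ast x.c \in B$ (or rather $\kbar$-composed: $\lambda^\ast x. \kbar x c$, using a $\kbar$ from $B^\#$) represents $\hat a$ w.r.t.\@ $g$, since for any $a' \in \dom \hat a = A$ we have $(\lambda^\ast x.c)\cdot g(a') \subseteq \downset\{c\} \subseteq g(a) = g(\hat a(a'))$; the passage $c \mapsto \lambda^\ast x. \kbar x c$ is effective in $B^\#$, giving $g \leq hi$. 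Conversely, if $b \in h(\hat a)$, then in particular $b \cdot g(a) \subseteq g(\hat a(a)) = g(a)$, so picking any fixed $c_0 \in g(c)$ for a fixed computable $c \in A^\#$... actually more simply: $hi \leq g$ needs a realizer $s \in B^\#$ with $s \cdot h(\hat a) \subseteq g(a)$ for all $a$; from $b \in h(\hat a)$ and $b_0 \in g(e) \cap B^\#$ for a fixed $e \in A^\#$ with... hmm — the clean way is: since $b$ represents $\hat a$, $b \cdot g(e) \subseteq g(a)$ for \emph{any} $e$, so fixing $e := \sf{i} \in A^\#$ and $b_0 \in g(\sf{i}) \cap B^\#$, the element $\lambda^\ast x. x b_0 \in B^\#$ maps $b \mapsto b b_0 \in g(a)$, giving $hi \leq g$. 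Hence $hi \simeq g$.

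\textbf{Step 3: maximality.} Suppose $h' \colon \mathcal{B}A \pf B$ is any partial applicative morphism with $h'i \simeq g$; I must show $h' \leq h$, i.e.\@ there is a realizer $s \in B^\#$ with $s \cdot h'(\alpha) \subseteq h(\alpha)$ for all $\alpha$, which amounts to: every $b \in h'(\alpha)$, after an effective modification, represents $\alpha$ w.r.t.\@ $g$. The idea is that $\alpha$ acts on its argument $a$ inside $\mathcal{B}A$ essentially by $\alpha\hat a$ up to the correspondence between $\alpha(a)$ and $\widehat{\alpha(a)}$ — more precisely, by \cref{prop:everything_rep} there is $\rho \in (\mathcal{B}A)^\#$ with $\rho\alpha\hat a \preceq \widehat{\alpha(a)}$, so $\alpha(a)$ can be recovered from the value of an application in $\mathcal{B}A$. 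Now given $b \in h'(\alpha)$, I transfer everything along $h'$: let $t'$ track $h'$, let $\rho_0 \in h'(\rho) \cap B^\#$, let $d'$ decide $h'$, and let $j \in B^\#$ realize $g \simeq h'i$. Then for $a \in \dom \alpha$ and $c \in g(a)$, I produce via $j$ an element of $h'(\hat a)$, combine it with $b$ and $\rho_0$ using $t'$ to land in $h'(\rho\alpha\hat a) \subseteq h'(\widehat{\alpha(a)})$, and then use $j$ in the other direction (together with the fact that the constant function $\widehat{\alpha(a)}$ applied in $\mathcal{B}A$ to anything returns $\alpha(a)$, transferred along $h'$) to extract an element of $g(\alpha(a))$. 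Tracing this chain gives a single $s \in B^\#$ (built from $t', \rho_0, d', j$ and combinators) such that $s b \cdot g(a) \subseteq g(\alpha(a))$ for all $a \in \dom\alpha$, i.e.\@ $sb \in h(\alpha)$. This $s$ realizes $h' \leq h$.

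\textbf{Main obstacle.} I expect the genuinely laborious step to be the explicit construction of the tracker $t_h$ for $h$ in Step 1 — it is the $\mathcal{B}A$-analogue of the $\boldsymbol{\sigma}$-construction and of the tracker built in the proof of \cref{thm:universal_Af}, and although conceptually routine (fixpoint operator plus a back-and-forth simulation of the interrogation, managing subbooleans via the decider and sequence-codes via $g$-transferred combinators), getting all the bookkeeping right is delicate, especially since $g$ is only \emph{partial} and one must be careful that every intermediate application stays within the relevant $g(\cdot)$ on the interrogation sequence. The maximality argument in Step 3 is the second trickiest point, since it requires exploiting \cref{prop:everything_rep} to realize that membership of $b$ in $h'(\alpha)$ already forces $b$ (up to effective modification) to represent $\alpha$ w.r.t.\@ $g = h'i$.
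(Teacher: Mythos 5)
Your proposal follows essentially the same route as the paper: the same explicit $h$, the same verification that $h(\hat a)\cap B^\#\neq\emptyset$ via effective representability transferring along $g$, the same fixpoint/interrogation-simulation tracker modelled on the proof of \cref{thm:universal_Af}, the same realizers $\sf{k}$ and $\lambda^\ast x.xj$ (with $j\in g(\sf{i})\cap B^\#$) for $hi\simeq g$, and the same maximality argument via the $\rho$ of \cref{prop:everything_rep} composed with a tracker of $h'$ and the realizers of $h'i\simeq g$. One small correction: the downset property of $h(\alpha)$ needs no effective modification of $b'$ (and indeed must not use one) — it is immediate from axiom (A) together with the fact that $g(\alpha(a))$ is itself a downset.
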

\begin{proof}
It is easy to see that $h(\alpha)$ is a downset of $B$. For simplicity, we will assume that $g$ preserves the order on the nose; then it follows that $h$ preserves the order on the nose as well. Moreover, if $\alpha\in (\mathcal{B}A)^\#$, then $\alpha$ is effectively representable in $A$, which implies that $\alpha$ is also effectively representable w.r.t.\@ $g$, which means that $h(\alpha)\cap B^\#$ is non-empty. So in order to show that $h$ is a partial applicative morphism it remains to construct a tracker.

The construction of a tracker will be very similar to the construction of a tracker in the proof of \cref{thm:universal_Af}. Let $t\in B^\#$ be a tracker of $g$ and let $d\in B^\#$ be a decider of $g$. Moreover, take $\sf{ext}', \sf{p}'_0, \sf{p}'_1, \sf{unit}'\in B^\#$ as in the proof of \cref{thm:universal_Af}. Using the fixpoint operator in $B$, we may construct a $U\in B^\#$ such that
\[
Uxyv \preceq\ \sf{if}\ d(\sf{p}'_0(xv)) \ \sf{then}\ \sf{p}'_1(xv) \ \sf{else}\ Uxy(\sf{ext}'v(y(\sf{p}'_1(xv)))).
\]
Using the familiar kind of argument, one can show that $T\cdot h(\alpha) \cdot h(\beta) \cdot g([a]) \subseteq g(\alpha\beta(a))$, whenever $\alpha,\beta\in\mathcal{B}(A)$ and $a\in A$ are such that $\alpha\beta(a)\denotes$. It follows that $\lambda^\ast xyz.Uxy(\sf{unit}'z)\in B^\#$ tracks $h$.

For $a\in A$, we have:
\[
hi(a) = h(\hat{a}) = \{b\in B\mid \forall a'\in A\s (b\cdot g(a')\subseteq g(a))\}.
\]
Now it is easy to see that $\sf{k}\in B^\#$ realizes $g\leq hi$, and if $j\in g(\sf{i})\cap B^\#$, then $\lambda^\ast x. xj\in B^\#$ realizes $hi\leq g$. So we indeed have $hi\simeq g$.

In order to show that $h$ is the \emph{largest} partial appplicative morphism such that $hi\simeq g$, suppose we have another $h'\colon \mathcal{B}A\pf B$ such that $h'i\simeq g$, and assume that $h'$ preserves the order on the nose. Let $t'\in B^\#$ be a tracker of $h'$, and let $r,s\in B^\#$ realize $h'i\leq g$ resp.\@ $g\leq h'i$. Moreover, consider the function $\rho\in (\mathcal{B}A)^\#$ from the proof of \cref{prop:everything_rep}; it has the property that $\rho\alpha\hat{a}\preceq\widehat{\alpha(a)}$ for all $\alpha\in \mathcal{B}A$ and $a\in A$. We pick a $q\in h'(\rho)\cap B^\#$. Now it is easy to see that, if $b\in g(a)$, $c\in h'(\alpha)$ and $\alpha(a)\denotes$, we have $r(t'(t'qc)(sb))\in g(\alpha(a))$. This implies that $\lambda^\ast xy.r(t'(t'qx)(sy))\in B^\#$ realizes $h'\leq h$, as desired.
\end{proof}
Observe that the $h$ constructed above will automatically be decidable as well, since $hi\simeq g$.

\begin{cor}
Let $g\colon A\pf B$ be a decidable partial applicative morphism. Then
\begin{align*}
&\{\alpha\in\mathcal{B}A\mid \alpha\mbox{ is representable w.r.t.\@ }g\}\quad \mbox{and} \\
&\{\alpha\in\mathcal{B}A\mid \alpha\mbox{ is effectively representable w.r.t.\@ }g\}
\end{align*}
are filters of $\mathcal{B}A$.
\end{cor}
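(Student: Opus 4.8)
The plan is to read this off from \cref{thm:extension_BA} and \cref{lem:dom=filter}. Applying \cref{thm:extension_BA} to the decidable partial applicative morphism $g\colon A\pf B$ produces a partial applicative morphism $h\colon\mathcal{B}A\pf B$ with
\[
h(\alpha) = \{b\in B\mid b\mbox{ represents }\alpha\mbox{ w.r.t.\@ }g\}
\]
for every $\alpha\in\mathcal{B}A$. The first step is then to observe that the two sets in the statement are, respectively, $\dom h$ and $\dom^\# h$: by \cref{defn:rep}(ii), $\alpha$ is representable w.r.t.\@ $g$ exactly when there is some $b\in B$ representing $\alpha$ w.r.t.\@ $g$, i.e.\@ exactly when $h(\alpha)\neq\emptyset$, and $\alpha$ is effectively representable w.r.t.\@ $g$ exactly when some such $b$ can be taken in $B^\#$, i.e.\@ exactly when $h(\alpha)\cap B^\#\neq\emptyset$.

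The second step is simply to invoke \cref{lem:dom=filter} for the partial applicative morphism $h$: it tells us that $\dom h$ and $\dom^\# h$ are filters of $\mathcal{B}A$, which is precisely the claim. (In particular, being filters of the PAS $\mathcal{B}A$, they contain $(\mathcal{B}A)^\#$; this is harmless, and indeed matches the fact that every element of $(\mathcal{B}A)^\#$, being effectively representable in $A$, is a fortiori effectively representable w.r.t.\@ $g$.)

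There is no real obstacle here, since the work has already been done in \cref{thm:extension_BA}. If one preferred, one could instead give a self-contained argument: closure under application would follow by composing representers using a tracker of $g$, exactly as in the proof of \cref{thm:extension_BA}, and upward closure would follow from the order-preservation witness of $g$ --- if $\alpha\leq\beta$ in $\mathcal{B}A$, then $\dom\beta\subseteq\dom\alpha$ and $\alpha(a)\leq\beta(a)$ for $a\in\dom\beta$, so a representer of $\alpha$ w.r.t.\@ $g$ yields one of $\beta$. Routing everything through $h$ just avoids repeating these computations.
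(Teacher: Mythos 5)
Your proof is correct and is exactly the paper's argument: apply \cref{thm:extension_BA} to obtain $h$, identify the two sets as $\dom h$ and $\dom^\# h$, and conclude via \cref{lem:dom=filter}. No issues.
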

\begin{proof}
If we let $h\colon \mathcal{B}A\pf B$ be as in \cref{thm:extension_BA}, then these sets are $\dom h$ and $\dom^\# h$, respectively.
\end{proof}

\begin{ex}\label{ex:id_is_max}
If we apply \cref{thm:extension_BA} to $i$ itself, then we see that
\[
h(\alpha) = \{\beta\in\mathcal{B}A\mid \beta\mbox{ represents }\alpha\mbox{ w.r.t.\@ }i\}
\]
defines the largest $h\colon \mathcal{B}A\to\mathcal{B}A$ such that $hi\simeq i$. We claim that $h$ is isomorpic to the identity. By \cref{thm:extension_BA}, we already know that $\id_{\mathcal{B}A}\leq h$. For the converse inequality, construct an $r\in A^\#$ such that:
\begin{itemize}
\item	$r\cdot [x]\leq \sf{p}\bot[\sf{i}]$;
\item	$r\cdot[x,u_0\ldots,u_i] \leq\ \sf{if}\ \sf{p}_0u_i\ \sf{then}\ \sf{p}\top(\sf{p}_1u_i)\ \sf{else}\ \sf{p}\bot[\sf{i},\underbrace{x, \ldots, x}_{i+1\textup{ times}}]$, for $i\geq 0$.
\end{itemize}
If $\rho\in (\mathcal{B}A)^\#$ is defined by $\rho(a)\simeq ra$, then $\rho\beta(a)\leq \beta\hat{a}(\sf{i})$ for all $a\in A$ and $\beta\in\mathcal{B}A$. In particular, if $\beta\in h(\alpha)$, then $\rho\beta\leq \alpha$, so $\rho$ realizes $h\leq \id_{\mathcal{B}A}$. In a slogan, we could say that \emph{elements} of $\mathcal{B}A$ and their \emph{representers} can be used interchangably in $\mathcal{B}A$.

If $f\in\mathcal{B}A$, then we can consider $\iota_f\colon \mathcal{B}A\to \mathcal{B}A[f]$ as in \cref{ex:iota_r}. A completely similar argument shows that $\iota_f$ is the largest $h\colon \mathcal{B}A\to\mathcal{B}A[f]$ such that $hi\simeq \iota_f i$, so the slogan remains true for this case. If we add a \emph{function} (on $\mathcal{B}A$) to $\mathcal{B}A$, however, then this principle will break down. It is clear that the argument above will not work, because adding a function involves changing the application. In the next section, we will give an explicit counterexample (\cref{ex:counterexample}).
\end{ex}

Another example of \cref{thm:extension_BA} is given by the following result. Again, we stress that such a result does not hold if $\mathcal{B}A$ is taken to be an absolute PCA.
\begin{prop}\label{prop:surjection}
The applicative morphism $i\colon A\to \mathcal{B}A$ has a right adjoint $h\colon\mathcal{B}A\pf A$ satisfying $hi\simeq\id_A$. In particular, there is a geometric surjection $\rt(\mathcal{B}A)\epi\rt(A)$.
\end{prop}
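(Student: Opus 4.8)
The plan is to produce the right adjoint explicitly and then verify the two defining inequalities. The candidate is obtained by applying \cref{thm:extension_BA} to the decidable morphism $\id_A\colon A\pf A$: this gives the largest partial applicative morphism $h\colon\mathcal BA\pf A$ with $hi\simeq\id_A$, and for $g=\id_A$ the formula in that theorem unwinds to
\[
h(\alpha)=\{b\in A\mid ba\preceq\alpha(a)\text{ for all }a\in A\},
\]
i.e.\ $h(\alpha)$ is the set of representers of $\alpha$ in the sense of \cref{defn:rep}(i). In particular \cref{thm:extension_BA} already delivers the unit inequality $\id_A\leq hi$ (indeed $hi\simeq\id_A$), and since $\sf{pPCA}$ is preorder-enriched the triangle identities hold automatically once the two $2$-cells are in place; so the only thing left for the adjunction $i\dashv h$ is the counit inequality $ih\leq\id_{\mathcal BA}$.

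For that inequality I would reuse the element $\nu\in(\mathcal BA)^\#$ constructed in the proof of \cref{prop:constant}, which satisfies $\nu\hat r(a)\preceq ra$ for all $a,r\in A$. If $b$ represents $\alpha$, then $\nu\hat b(a)\preceq ba\preceq\alpha(a)$ for every $a$, hence $\nu\hat b\leq\alpha$. Since $ih(\alpha)=\bigcup_{b\in h(\alpha)}\downset\{\hat b\}$ and application in $\mathcal BA$ preserves the order, this gives $\nu\cdot ih(\alpha)\subseteq\downset\{\alpha\}=\id_{\mathcal BA}(\alpha)$ for all $\alpha$, so $\nu$ realizes $ih\leq\id_{\mathcal BA}$. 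This establishes $i\dashv h$ with $hi\simeq\id_A$.

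For the geometric surjection, recall from \cref{prop:constant} that $i$ is total and computationally dense, so \cref{thm:cd=rightadjoint} yields a geometric morphism $\rt(\mathcal BA)\to\rt(A)$ whose inverse image functor is $\rt(i)$. It therefore suffices to show this inverse image is faithful. Applying the pseudofunctor $\rt$ to the isomorphism of $1$-cells $hi\simeq\id_A$ gives $\rt(h)\circ\rt(i)\cong\id_{\rt(A)}$, so $\rt(i)$ has a retraction and is in particular faithful; and a geometric morphism with faithful inverse image is a surjection. (One could push this further: the induced unit $\id_{\rt(A)}\To\rt(h)\rt(i)$ is then an isomorphism, so $\rt(i)$ is fully faithful and the morphism is even connected, but faithfulness is all that surjectivity needs.)

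The only genuine computation is the verification that $\nu$ realizes $ih\leq\id_{\mathcal BA}$; everything else is assembled from \cref{thm:extension_BA}, \cref{prop:constant} and \cref{thm:cd=rightadjoint}. The point to be careful about is the one this paper keeps stressing: an element $\alpha$ of $\mathcal BA$ and its various representers $b\in A$ must be translated into one another, and here $\nu$ performs exactly that translation --- but one must check it works uniformly (a single $\nu$, independent of $\alpha$ and of the chosen representer) and is compatible with the downward closures appearing in the composite $ih$.
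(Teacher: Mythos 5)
Your proposal is correct and follows the paper's route: both obtain $h$ by applying \cref{thm:extension_BA} to $\id_A$, so that $hi\simeq\id_A$ comes for free and only the counit inequality $ih\leq\id_{\mathcal{B}A}$ needs work. The one genuine difference is how that inequality is established. The paper observes that $ihi\simeq i$ and invokes \cref{ex:id_is_max} (the largest $h'$ with $h'i\simeq i$ is isomorphic to $\id_{\mathcal{B}A}$, so $ih\leq\id_{\mathcal{B}A}$), whereas you realize the inequality directly by the element $\nu$ from the proof of \cref{prop:constant}; your computation $\nu\hat{b}(a)\preceq ba\preceq\alpha(a)$ is sound, including the compatibility with the downward closures in $ih(\alpha)$, and it is essentially the same translation of representers into functions that underlies \cref{ex:id_is_max} anyway. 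Your explicit treatment of the surjection (faithfulness of $\rt(i)$ from the retraction $\rt(h)\rt(i)\cong\id_{\rt(A)}$) is a correct filling-in of what the paper leaves implicit. One small caveat on your closing parenthetical: since $\rt$ is contravariant on 2-cells, the adjunction $i\dashv h$ transports to $\rt(h)\dashv\rt(i)$, so $\rt(h)$ is a \emph{left} adjoint of $\rt(i)$ rather than the direct image of the geometric morphism; the claim about the unit of the geometric morphism being an isomorphism would need a separate argument. But as you say, faithfulness is all that surjectivity requires, so this does not affect the proof.
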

\begin{proof}
By \cref{thm:extension_BA}, there is a (largest) partial applicative morphism $h\colon \mathcal{B}A\to A$ such that $hi\simeq\id_A$. We also have $ihi \simeq i$, and by \cref{ex:id_is_max}, this implies $ih\leq \id_{\mathcal{B}A}$.
\end{proof}

As a corollary, we see that, while $i$ makes every element of $\mathcal{B}A$ representable, it adds no new \emph{effectively} representable functions.
\begin{cor}
An element of $\mathcal{B}A$ is effectively representable w.r.t.\@ $i$ iff it is effectively representable in $A$ itself.
\end{cor}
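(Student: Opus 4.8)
The plan is to obtain both implications from the transferability of effective representability along (partial) applicative morphisms, discussed right after \cref{rem:rep}, together with \cref{prop:surjection}. Recall that ``$\alpha$ is effectively representable in $A$'' means ``$\alpha$ is effectively representable w.r.t.\ $\id_A$'', the special case of \cref{defn:rep}(ii) noted just below that definition.

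Since $i = i\,\id_A$, applying the transfer result with $\id_A\colon A\to A$ followed by $i\colon A\to\mathcal{B}A$ shows at once that any $\alpha\in\mathcal{B}A$ that is effectively representable w.r.t.\ $\id_A$ is effectively representable w.r.t.\ $i$. This is one direction.

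For the converse, let $\alpha\in\mathcal{B}A$ be effectively representable w.r.t.\ $i$, and let $h\colon\mathcal{B}A\pf A$ be the right adjoint of $i$ from \cref{prop:surjection}. An effective representer of $\alpha$ w.r.t.\ $i$ lies in $(\mathcal{B}A)^\#\subseteq\dom^\# h$ (property~1 of $h$), so transferring along $h$ shows that $\alpha$ is effectively representable w.r.t.\ $hi$. As $hi\simeq\id_A$, it only remains to note that effective representability of an element of $\mathcal{B}A$ depends only on the $\simeq$-class of the morphism: if $g\simeq g'\colon A\pf B$ and $s\in B^\#$ represents $\alpha$ w.r.t.\ $g$, then, choosing $u,v\in B^\#$ realizing $g'\leq g$ resp.\ $g\leq g'$, the element $\lambda^\ast x.\,v(s(ux))\in B^\#$ represents $\alpha$ w.r.t.\ $g'$. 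Hence $\alpha$ is effectively representable w.r.t.\ $\id_A$, i.e., in $A$. (Alternatively, the statement can be read off \cref{ex:id_is_max}: the morphism $\mathcal{B}A\pf\mathcal{B}A$ attached by \cref{thm:extension_BA} to $g = i$ is $\simeq\id_{\mathcal{B}A}$, and its $\dom^\#$ — which, by the corollary preceding \cref{ex:id_is_max}, is precisely the set of $\alpha\in\mathcal{B}A$ effectively representable w.r.t.\ $i$ — equals $(\mathcal{B}A)^\#$, the set of functions effectively representable in $A$.)

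There is no substantial obstacle here: the transfer lemma and the $\simeq$-invariance of effective representability are both short $\lambda^\ast$-calculations. The only genuine point is that $i$ is not split on the nose by any morphism $\mathcal{B}A\to A$, so one must route through the right adjoint $h$ and make do with the weaker identity $hi\simeq\id_A$.
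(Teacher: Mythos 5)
Your proof is correct and follows essentially the same route as the paper's: the paper's own (one-line) proof likewise invokes the transfer of effective representability along partial applicative morphisms together with the left inverse $h$ of $i$ from \cref{prop:surjection}. You merely spell out the two details the paper leaves implicit — that the representer lands in $\dom^\# h$ via $(\mathcal{B}A)^\#\subseteq\dom^\# h$, and that effective representability is invariant under $\simeq$ so that $hi\simeq\id_A$ suffices — both of which are needed and correctly handled.
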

\begin{proof}
This is immediate from the existence of a left inverse for $i$ in $\sf{pPCA}$, along with the fact that \emph{effective} representability transfers along partial applicative morphisms.
\end{proof}

We close this section with a novel topos-theoretic interpretation of the construction from the previous section, which freely adjoins a partial function to a PCA. For $f\in\mathcal{B}A$, we can adjoin $f$ as a function to $A$, yielding $\iota_f\colon A\to A[f]$, but we can also adjoin it as an \emph{element} to $\mathcal{B}A$, which gives $\iota_f\colon \mathcal{B}A\to\mathcal{B}A[f]$. In the proof of \cref{prop:everything_rep}, we constructed a $\rho\in(\mathcal{B}A)^\#$ such that $\rho\alpha$ is defined and represents $\alpha$ w.r.t.\@ $i$, for all $\alpha\in \mathcal{B}A$. From this, we easily deduce that $\rho f\in (\mathcal{B}A[f])^\#$ represents $f$ w.r.t.\@ $\iota_fi$. This means we get a factorisation:
\[\begin{tikzcd}
A \arrow[r, "i"] \arrow[d, "\iota_f"'] & \mathcal{B}A \arrow[d, "\iota_f"] \\
{A[f]} \arrow[r, "j", dashed]          & {\mathcal{B}A[f]}                
\end{tikzcd}\]
where $j$ acts as $i$ does, i.e., $j(a) = \downset\{\hat{a}\}$.

According to \cref{thm:extension_BA},
\[
h(\alpha) = \{a\in A\mid a\mbox{ represents }\alpha\mbox{ w.r.t.\@ }\iota_f\colon A\to A[f]\}
\]
defines a partial applicative morphism $h\colon \mathcal{B}A\pf A[f]$ such that $hi\simeq \iota_f$. By construction, $f$ is effectively representable w.r.t.\@ $\iota_f$, so $f\in \dom^\# h$. This means that $h$ factors through $\iota_f\colon\mathcal{B}A \to\mathcal{B}A[f]$ as $h\simeq k\iota_f$ for some $k\colon \mathcal{B}A[f]\pf A[f]$.
\[\begin{tikzcd}[row sep=large, column sep=large]
A \arrow[r, "i"] \arrow[d, "\iota_f"'] & \mathcal{B}A \arrow[d, "\iota_f"] \arrow[ld, "h"', dashed] \\
{A[f]} \arrow[r, "j"', shift right]    & {\mathcal{B}A[f]} \arrow[l, "k"', dashed, shift right]    
\end{tikzcd}\]
Now we have
\[
kj\iota_f \simeq k\iota_f i \simeq hi \simeq \iota_f,
\]
so by \cref{cor:universal_Af}, we have $kj\simeq \id_{A[f]}$. On the other hand,
\[
jk\iota_fi \simeq jhi \simeq j\iota_f \simeq \iota_f i.
\]
By \cref{ex:id_is_max}, this implies $jk\iota_f\leq \iota_f$, which yields $jk\leq \id_{\mathcal{B}A[f]}$. We conclude that $j\dashv k$ with $kj\simeq \id_{A[f]}$.

Recall from \cref{ex:iota_r_cd} that $\rt(\mathcal{B}A[f])$ is equivalent to the slice of $\rt(\mathcal{B}A)$ over the subterminal assembly $1_f$, which is given by $|1_f| = \{\ast\}$ and $E_{1_f}(\ast) = \downset\{f\}$. So at the level of toposes, we get the following diagram:
\[\begin{tikzcd}
\rt(\mathcal{B}A)/1_f \arrow[r, hook] \arrow[d, two heads] & \rt(\mathcal{B}A) \arrow[d, two heads] \\
{\rt(A[f])} \arrow[r, hook]                                & \rt(A)                                  
\end{tikzcd}\]
Intuitively, this diagram may be explained as follows. First, we cover $\rt(A)$ by the topos $\rt(\mathcal{B}A)$, where the truth values are sets of \emph{functions} rather than sets of elements of $A$. Then, we make $f$ computable (or alternatively: true) at \emph{that} level by taking the slice subtopos $\rt(\mathcal{B}A)/1_f$. Finally, $\rt(A[f])$ is retrieved as the geometric surjection-inclusion factorisation of the composition $\rt(\mathcal{B}A)/1_f \mono \rt(\mathcal{B}A) \epi \rt(A)$.

Another application of the construction of $j$ and $k$ above is the following nice characteization of the elements in $\mathcal{B}A$ that are effectively representable w.r.t.\@ $\iota_f\colon A\to A[f]$. Since $j$ has a left inverse, we have that $\alpha\in\mathcal{B}A$ is effectively representable w.r.t.\@ $\iota_f$ iff $\alpha$ is effectively representable w.r.t.\@ $j\iota_f\simeq \iota_f i$. By \cref{ex:id_is_max}, we know that $\alpha\in\mathcal{B}A$ and representers of $\alpha$ w.r.t.\@ $\iota_fi$ can be effectively translated into one another, so $\alpha$ is representable w.r.t.\@ $\iota_fi$ iff $\alpha\in (\mathcal{B}A[f])^\# = \langle (\mathcal{B}A)^\#\cup\{f\}\rangle$. We can conclude that
\[
\{\alpha\in\mathcal{B}A\mid \alpha\mbox{ is effectively representable w.r.t.\@ }\iota_f\colon A\to A[f]\} = \langle (\mathcal{B}A)^\#\cup\{f\}\rangle.
\]
In other words, the set of all elements of $\mathcal{B}A$ that are `forced' to become effectively representable if $f$ is effectively representable, can simply be obtained by adding $f$ to the filter in the PCA $\mathcal{B}A$.


\section{Adjoining a type-2 functional}\label{sec:type2}

In the previous section, we have explained how to freely adjoin zeroth- and first-order functions to a PCA $A$. This prompts the question whether a similar construction is available for \emph{second-order} functions. The answer to this question is yes, as the paper \cite{type2} shows for the absolute discrete case. In this section, we generalize this construction to \emph{some} relative ordered PCAs; it will in fact turn out that the order needs to be nice in a specific sense for the construction to work. There are two reasons for generalizing the material from \cite{type2}. First of all, the construction ties in quite nicely with \cref{thm:extension_BA} above in a way that had not been observed before. Second, in the next section we will apply this construction to $\mathcal{B}A$ (which is always an ordered PCA), in order to see what one can obtain for the \emph{third}-order case.

Let us first introduce the restriction on PCAs we need in order for the construction to work.
\begin{defn}\label{defn:cc}
Let $A$ be a PCA.
\begin{itemize}
\item[(i)]	A \emph{chain} in $A$ is a non-empty subset $X\subseteq A$ that is totally ordered by $\leq$.
\item[(ii)]	$A$ is called \emph{chain-complete} if every chain $X\subseteq A$ has a greatest lower bound in $A$, which we will denote by $\bigwedge X$.
\item[(iii)]	If $A$ is chain-complete, then a partial applicative morphism $g\colon A\pf B$ is called \emph{chain-continuous} if, for every chain $X\subseteq A$, we have $g\left(\bigwedge X\right) = \bigcap_{a\in X} g(a)$.
\end{itemize}
\end{defn}

We stress some important aspects of this definition.
\begin{rem}\label{rem:cc}
\begin{itemize}
\item[(i)]	Since we will not usually be interested in the greatest lower bound of $\emptyset$ (i.e., the top element), chains are non-empty by definition.
\item[(ii)]	If $g\colon A\pf B$ is chain-continuous, then it must preserve the order on the nose. Indeed, if $a'\leq a$, then $X = \{a',a\}$ is a chain with $\bigwedge X = a'$. It follows that $g(a') = g(a')\cap g(a)$, i.e., $g(a')\subseteq g(a)$.
\item[(iii)]	Of course, \cref{defn:cc} works just as well if $A$ and $B$ are merely posets, and $g$ is any function $A\to DB$. But we will only use these notions for PCAs and partial applicative morphisms.
\end{itemize}
\end{rem}

\begin{ex}\label{ex:disc=cc}
Every discrete PCA $A$ is chain-complete, since the only chains are singletons. For the same reason, every partial applicative morphism $g\colon A\pf B$ is trivially chain-continuous. This shows that our construction will subsume the discrete case.
\end{ex}

\begin{ex}\label{ex:cc_proj}
Suppose that $A$ is chain-complete, and that $g\colon A\to B$ is total and single-valued, i.e., we can write $g(a) = \downset\{g_0(a)\}$ for a certain \emph{function} $A\to B$. Then $g$ is chain-continuous if and only if, for each chain $X\subseteq A$, the greatest lower bound of $g_0(X)\subseteq B$ exists and is equal to $g_0(\bigwedge X)$.

From this, it is easy to deduce the following. Suppose that $A$ and $B$ are chain-complete, and that $A\stackrel{g}{\to} B\stackrel{h}{\pf} C$ are chain-continuous with $g$ projective. Then $hg$ is chain-continuous as well. This does \emph{not} seem to be true if we do not require that $g$ is projective.
\end{ex}

The following two lemmata discuss the compatibility between the notions introduced above and the construction $\mathcal{B}A$.

\begin{lem}\label{lem:cc_BA}
Let $A$ be a chain-complete PCA. Then $\mathcal{B}A$ is also chain-complete, and $i\colon A\to \mathcal{B}A$ is chain-continuous.
\end{lem}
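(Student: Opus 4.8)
The plan is to verify the two claims separately, starting with chain-completeness of $\mathcal{B}A$. Let $\mathcal{X}\subseteq\mathcal{B}A$ be a chain. The natural candidate for $\bigwedge\mathcal{X}$ is the pointwise greatest lower bound: define $\gamma(a)$, for $a\in A$, to be defined precisely when $\{\alpha(a)\mid\alpha\in\mathcal{X},\ \alpha(a)\denotes\}$ is non-empty, in which case we set $\gamma(a)=\bigwedge\{\alpha(a)\mid\alpha\in\mathcal{X},\ \alpha(a)\denotes\}$. The first thing to check is that this makes sense: for fixed $a$, the set $\{\alpha(a)\mid\alpha\in\mathcal{X},\ \alpha(a)\denotes\}$ is a chain in $A$ (since $\mathcal{X}$ is totally ordered by $\leq$, for $\alpha\leq\beta$ in $\mathcal{X}$ we have $\alpha(a)\preceq\beta(a)$, so whenever both are defined they are comparable), hence has a greatest lower bound by chain-completeness of $A$. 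Next I would check $\gamma\in\mathcal{B}A$, i.e.\ that $\dom\gamma$ is downward closed and $\gamma$ is order-preserving on its domain: if $a'\leq a$ and $\gamma(a)\denotes$, pick $\alpha\in\mathcal{X}$ with $\alpha(a)\denotes$; then $\alpha(a')\denotes$ and $\alpha(a')\preceq\alpha(a)$, so $a'\in\dom\gamma$; the inequality $\gamma(a')\preceq\gamma(a)$ then follows because every witness $\beta(a)$ for $\gamma(a)$ comes with a witness $\beta(a')\leq\beta(a)$ for $\gamma(a')$, so $\gamma(a')$ is a lower bound of a set cofinal below the one defining $\gamma(a)$ — more carefully, one shows $\gamma(a')\leq\beta(a')\leq\beta(a)$ for all relevant $\beta$, hence $\gamma(a')\leq\gamma(a)$.

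Then I would verify that $\gamma$ is indeed the greatest lower bound of $\mathcal{X}$ in the poset $\mathcal{B}A$. That $\gamma\leq\alpha$ for each $\alpha\in\mathcal{X}$ is clear: if $\alpha(a)\denotes$ then $a\in\dom\gamma$ and $\gamma(a)\leq\alpha(a)$ by construction. Conversely, if $\delta\in\mathcal{B}A$ satisfies $\delta\leq\alpha$ for all $\alpha\in\mathcal{X}$, then for $a\in\dom\gamma$ there is some $\alpha\in\mathcal{X}$ with $\alpha(a)\denotes$, hence $\delta(a)\denotes$; and $\delta(a)$ is a lower bound for $\{\beta(a)\mid\beta\in\mathcal{X},\ \beta(a)\denotes\}$, so $\delta(a)\leq\gamma(a)$; thus $\delta\leq\gamma$. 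This shows $\bigwedge\mathcal{X}=\gamma$ exists, so $\mathcal{B}A$ is chain-complete. (I should recall here that the order on $\mathcal{B}A$ has $\alpha\leq\beta$ meaning $\dom\alpha\supseteq\dom\beta$ together with a pointwise inequality, so ``greatest lower bound'' genuinely corresponds to the largest possible domain with the pointwise infimum of values.)

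For chain-continuity of $i\colon A\to\mathcal{B}A$, recall $i(a)=\downset\{\hat a\}$, where $\hat a$ is the constant function with value $a$. Given a chain $X\subseteq A$, I must show $i\bigl(\bigwedge X\bigr)=\bigcap_{a\in X}i(a)$, i.e.\ $\downset\{\widehat{\bigwedge X}\}=\bigcap_{a\in X}\downset\{\hat a\}$. The right-hand side is $\{\beta\in\mathcal{B}A\mid\beta\leq\hat a\text{ for all }a\in X\}$. Since $\hat a$ is total, $\beta\leq\hat a$ forces $\beta$ to be total with $\beta(b)\leq a$ for all $b\in A$; so $\beta\in\bigcap_{a\in X}i(a)$ iff $\beta$ is total and $\beta(b)$ is a lower bound of $X$ for every $b$, i.e.\ iff $\beta(b)\leq\bigwedge X$ for every $b$ (using that $\bigwedge X$ is the greatest lower bound), i.e.\ iff $\beta\leq\widehat{\bigwedge X}$, i.e.\ iff $\beta\in\downset\{\widehat{\bigwedge X}\}$. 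This is the desired equality. (One direction — $i(\bigwedge X)\subseteq\bigcap i(a)$ — is just order-preservation, already known from \cref{rem:cc}(ii); the content is the reverse inclusion, which is exactly the computation just sketched.)

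I do not expect a serious obstacle here; the one point requiring a little care is the order-preservation clause for $\gamma$ on its domain in the first paragraph, because one is comparing infima of two different chains (the values at $a'$ versus the values at $a$) and must use that each element of the $a$-chain dominates a corresponding element of the $a'$-chain. Everything else is a direct unwinding of the definitions of the order on $\mathcal{B}A$ and of $i$, together with the defining universal property of greatest lower bounds in $A$.
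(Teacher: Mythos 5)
Your proposal is correct and follows essentially the same route as the paper: the greatest lower bound of a chain in $\mathcal{B}A$ is constructed pointwise on the union of the domains (the paper leaves the verification that this lies in $\mathcal{B}A$ and is the infimum to the reader, which you carry out), and chain-continuity of $i$ reduces to the identity $\bigwedge_{a\in X}\hat a=\widehat{\bigwedge X}$, which you establish by direct computation where the paper invokes \cref{ex:cc_proj}.
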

\begin{proof}
Let $\{\alpha_i\mid i\in I\}$ be a chain in $\mathcal{B}A$. We define its greatest lower bound $\alpha = \bigwedge\{\alpha_i\mid i\in I\}$ as follows. First of all, we set:
\[\dom\alpha = \bigcup_{i\in I} \dom\alpha_i = \{a\in A\mid \exists i\in I\s (\alpha_i(a))\denotes\}.\]
For $a\in \dom \alpha$, the set $X_a = \{\alpha_i(a)\mid i\in I\mbox{ and }a\in \dom\alpha_i\}$ is a chain in $A$. This means we can define $\alpha(a) = \bigwedge X_a$. We leave it to the reader to show that $\alpha\in\mathcal{B}A$, and that $\alpha$ is indeed the greatest lower bound of the $\alpha_i$.

For the final statement, we observe that, for a chain $X\subseteq A$, we have $\bigwedge_{a\in X} \hat{a} = \widehat{\bigwedge X}$, which suffices by \cref{ex:cc_proj}.
\end{proof}

\begin{lem}\label{lem:cc_gh}
Consider a decidable partial applicative morphism $g\colon A\pf B$, where $A$ is chain-complete. Let $h\colon \mathcal{B}A\pf B$ be the largest partial applicative morphism such that $hi\simeq g$ as in \cref{thm:extension_BA}. If $g$ is chain-continuous, then $h$ is chain-continuous as well.
\end{lem}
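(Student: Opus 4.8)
The plan is to argue directly from the explicit formula for $h$ provided by \cref{thm:extension_BA}, namely
\[
h(\alpha) = \{b\in B\mid \forall a\in A\s(\alpha(a)\denotes\ \to\ b\cdot g(a)\subseteq g(\alpha(a)))\},
\]
combined with the explicit description of greatest lower bounds in $\mathcal{B}A$ from the proof of \cref{lem:cc_BA}. Fix a chain $\{\alpha_i\mid i\in I\}$ in $\mathcal{B}A$ with meet $\alpha=\bigwedge_i\alpha_i$. Recall from \cref{lem:cc_BA} that $\dom\alpha=\bigcup_i\dom\alpha_i$ and that, for $a\in\dom\alpha$, one has $\alpha(a)=\bigwedge X_a$ where $X_a=\{\alpha_i(a)\mid i\in I,\ a\in\dom\alpha_i\}$ is a chain in $A$. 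We must establish $h(\alpha)=\bigcap_i h(\alpha_i)$.

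For the inclusion $h(\alpha)\subseteq\bigcap_i h(\alpha_i)$ I would first note that, since $g$ is chain-continuous, it preserves the order on the nose by \cref{rem:cc}(ii); the displayed formula then makes $h$ preserve the order on the nose as well. Indeed, if $\alpha\leq\beta$ in $\mathcal{B}A$ and $b\in h(\alpha)$, then for any $a$ with $\beta(a)\denotes$ we have $\alpha(a)\denotes$ and $\alpha(a)\leq\beta(a)$, hence $b\cdot g(a)\subseteq g(\alpha(a))\subseteq g(\beta(a))$, so $b\in h(\beta)$. Since $\alpha\leq\alpha_i$ for every $i$, this gives $h(\alpha)\subseteq\bigcap_i h(\alpha_i)$ with no further work.

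The content is the reverse inclusion. Take $b\in\bigcap_i h(\alpha_i)$ and fix an $a$ with $\alpha(a)\denotes$; I want $b\cdot g(a)\subseteq g(\alpha(a))$. Let $I_a=\{i\in I\mid a\in\dom\alpha_i\}$, which is non-empty because $a\in\dom\alpha=\bigcup_i\dom\alpha_i$. For every $i\in I_a$ we have $b\cdot g(a)\subseteq g(\alpha_i(a))$ since $b\in h(\alpha_i)$, so $b\cdot g(a)\subseteq\bigcap_{i\in I_a}g(\alpha_i(a))$. Now $X_a=\{\alpha_i(a)\mid i\in I_a\}$ is a chain in $A$ with $\bigwedge X_a=\alpha(a)$, and $\bigcap_{i\in I_a}g(\alpha_i(a))=\bigcap_{c\in X_a}g(c)$; chain-continuity of $g$ therefore yields $g(\alpha(a))=g(\bigwedge X_a)=\bigcap_{c\in X_a}g(c)\supseteq b\cdot g(a)$. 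As $a$ was arbitrary, $b\in h(\alpha)$, completing the inclusion.

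I do not expect a genuine obstacle here; the only points requiring care are the bookkeeping around $\dom\alpha$ — one must restrict to the indices $i\in I_a$ for which $\alpha_i(a)$ is actually defined, so that $X_a$ is exactly the chain whose meet computes $\alpha(a)$ — and the observation that, since chain-continuity entails preservation of the order on the nose (\cref{rem:cc}(ii)) and the latter is not invariant under isomorphism of applicative morphisms, the statement must be read as a claim about the canonical representative $h$ produced by \cref{thm:extension_BA}, which is exactly what the proof above addresses.
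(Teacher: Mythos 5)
Your proposal is correct and follows essentially the same route as the paper: both directions are handled identically, with the easy inclusion coming from $h$ preserving the order on the nose and the reverse inclusion obtained by restricting to the indices $i$ with $a\in\dom\alpha_i$ and applying chain-continuity of $g$ to the chain $\{\alpha_i(a)\mid a\in\dom\alpha_i\}$ whose meet is $\alpha(a)$. The extra bookkeeping remarks you add are fine but not needed beyond what the paper already records.
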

\begin{proof}
Suppose that $g$ is chain-continuous. We recall that, since $g$ preserves the order on the nose, $h$ does so as well. If $\{\alpha_i\mid i\in I\}$ is a chain in $\mathcal{B}A$ and $\alpha=\bigwedge_{i\in I}\alpha_i$, then this already implies that $h(\alpha)\subseteq \bigcap_{i\in I} h(\alpha_i)$. So it remains to show the converse inclusion, i.e., if $b\in B$ represents all the $\alpha_i$ w.r.t.\@ $g$, then $b$ also represents $\alpha$ w.r.t.\@ $g$.

So suppose that $b\in B$ represents $\alpha_i$ w.r.t.\@ $g$, for all $i\in I$, and consider an $a\in\dom\alpha$. Then $a\in\dom\alpha_i$ for some $i$, which implies that $b\cdot g(a)$ is defined. Moreover, for all $i\in I$ such that $a\in\dom\alpha_i$, we have $b\cdot g(a)\subseteq g(\alpha_i(a))$. This yields:
\[
b\cdot g(a) \subseteq \bigcap_{\substack{i\in I\\ a\in\dom\alpha_i}} g(\alpha_i(a)) = g\left(\bigwedge \{\alpha_i(a)\mid i\in I\mbox{ and } a\in\dom\alpha_i\}\right) = g(\alpha(a)),
\]
as desired.
\end{proof}

The main point of introducing chain-completeness is that we can perform `fixpoint constructions' in $\mathcal{B}A$, which is crucial for generalizing the contruction from \cite{type2}. This construction is described in the following proposition.

\begin{prop}\label{prop:fixpoint}
Let $A$ be a chain-complete PCA and let $F\in\mathcal{B}\mathcal{B}A$ be a \emph{total} function. Then $F$ has a largest fixpoint in $\mathcal{B}A$.
\end{prop}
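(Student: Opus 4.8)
The plan is to run a transfinite iteration from the top element of $\mathcal{B}A$ downwards, using chain-completeness to take limits at limit stages, and to extract the largest fixpoint from the resulting decreasing ordinal-indexed chain. Recall that the empty function $\emptyset$ is the top element of $\mathcal{B}A$. Since $F\in\mathcal{B}\mathcal{B}A$ is total and order-preserving (in the sense of \cref{defn:BA_as_poset}, now applied one level up, so $\alpha\leq\beta$ implies $F(\alpha)\preceq F(\beta)$; as $F$ is total this reads $F(\alpha)\leq F(\beta)$), we may define a sequence $(\gamma_\xi)_\xi$ indexed by ordinals by $\gamma_0 = \emptyset$, $\gamma_{\xi+1} = F(\gamma_\xi)$, and $\gamma_\lambda = \bigwedge_{\xi<\lambda}\gamma_\xi$ for limit $\lambda$. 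First I would check that this is well-defined and produces a decreasing chain: since $\gamma_0=\emptyset$ is the top, $\gamma_1 = F(\gamma_0)\leq\gamma_0$, and then by induction, using monotonicity of $F$, one gets $\gamma_{\xi+1}\leq\gamma_\xi$ for all $\xi$, so $\{\gamma_\xi\mid \xi<\lambda\}$ is indeed a chain for each $\lambda$ (being a decreasing family), hence $\gamma_\lambda$ exists by chain-completeness; one also checks $\gamma_\lambda\leq\gamma_\xi$ for $\xi<\lambda$ and that monotonicity is preserved through the limit step.

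Next, by the usual cardinality argument this decreasing chain must stabilize: there is an ordinal $\theta$ with $\gamma_{\theta+1}=\gamma_\theta$, i.e.\ $F(\gamma_\theta)=\gamma_\theta$, so $\gamma_\theta$ is a fixpoint. (Concretely, the $\gamma_\xi$ cannot all be distinct, since $\mathcal{B}A$ is a set; and once $\gamma_{\xi+1}=\gamma_\xi$ the sequence is constant from then on by induction. To make the limit steps behave, I would note that if $\gamma_\eta = \gamma_\zeta$ for some $\eta<\zeta$ then the sequence is already constant on $[\eta,\zeta]$ by monotonicity, so the first repeat is at a successor stage.) Set $\gamma := \gamma_\theta$.

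Finally I would show $\gamma$ is the \emph{largest} fixpoint. Suppose $\delta\in\mathcal{B}A$ satisfies $F(\delta) = \delta$; I claim $\delta\leq\gamma_\xi$ for all $\xi$, by transfinite induction. For $\xi=0$ this is just $\delta\leq\emptyset$, true since $\emptyset$ is the top. For the successor step, $\delta\leq\gamma_\xi$ gives $\delta = F(\delta)\leq F(\gamma_\xi) = \gamma_{\xi+1}$ by monotonicity of $F$. For a limit $\lambda$, if $\delta\leq\gamma_\xi$ for all $\xi<\lambda$, then $\delta$ is a lower bound of $\{\gamma_\xi\mid\xi<\lambda\}$, so $\delta\leq\bigwedge_{\xi<\lambda}\gamma_\xi = \gamma_\lambda$ since the latter is the \emph{greatest} lower bound. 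Taking $\xi=\theta$ gives $\delta\leq\gamma$, as required. I expect the only mildly delicate point to be bookkeeping at the limit stages — specifically, confirming that the family $\{\gamma_\xi\mid\xi<\lambda\}$ genuinely is a chain (this follows from it being a $\leq$-decreasing family, which in turn needs the induction that each $\gamma_\xi$ is a lower bound of its predecessors) and that no order-preservation hypothesis is silently needed beyond monotonicity of $F$; the totality of $F$ is what guarantees $\gamma_{\xi+1}=F(\gamma_\xi)$ is always a genuine element of $\mathcal{B}A$ rather than something undefined, so the iteration never breaks down.
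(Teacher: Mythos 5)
Your proposal is correct and follows essentially the same route as the paper: transfinite iteration of $F$ starting from the top element $\emptyset$, limits via chain-completeness, stabilization by cardinality, and maximality by transfinite induction. The only cosmetic difference is that the paper proves the slightly stronger statement that any $f'$ with $f'\leq F(f')$ satisfies $f'\leq f_\zeta$, whereas you verify this only for genuine fixpoints; both suffice for the claim.
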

\begin{rem}
It may seem strange that we construct a \emph{largest} fixpoint, since recursion theory is usually concerned with \emph{smallest} fixpoints. However, we must keep in mind that `largest' should be read w.r.t.\@ the ordering on $\mathcal{B}A$ as in \cref{defn:BA_as_poset}. If $A$ is discrete, then this is the \emph{reverse} subfunction relation, so what we identify as the largest fixpoint would usually indeed be called the smallest fixpoint.
\end{rem}
\begin{proof}[Proof of \cref{prop:fixpoint}]
A total function $F\in\mathcal{BB}A$ is simply an order-preserving function $\mathcal{B}A\to\mathcal{B}A$.
We define, recursively, an ordinal-indexed sequence of element $f_\gamma$ of $\mathcal{B}A$, as follows:
\begin{itemize}
\item	$f_0 = \emptyset$;
\item	$f_{\gamma+1} = F(f_\gamma)$;
\item	$f_\lambda = \bigwedge_{\kappa<\lambda} f_\kappa$ if $\lambda>0$ is a limit ordinal.
\end{itemize}
Using transfinite induction and the fact that $F$ is order-preserving, one may show that $f_\gamma\geq f_\delta$ for $\gamma\leq\delta$, and that the sequence is well-defined. By cardinality considerations, the sequence must stabalize at some point, i.e., there exists an ordinal $\zeta$ such that $F(f_\zeta) = f_\zeta$. Then $f_\zeta$ is a fixpoint of $f$. Moreover, if $f'\in\mathcal{B}A$ is an element satisfying $f'\leq F(f')$, then by transfinite induction, it easily follows that $f'\leq f_\gamma$ for every ordinal $\gamma$, and in particular, $f'\leq f_\zeta$. We conclude that $f_\zeta$ is the largest fixpoint of $F$.
\end{proof}

\begin{prop}\label{prop:fixpoint_rep}
Consider a decidable partial applicative morphism $g\colon A\pf B$, such that $A$ is chain-complete and $g$ is chain-continuous. Let $F\in\mathcal{BB}A$ be a total function, and let $h\colon \mathcal{B}A\pf B$ be the largest partial applicative morphism such that $hi\simeq g$. If $F$ is effectively representable w.r.t.\@ $h$, then the largest fixpoint of $F$ is effectively representable w.r.t.\@ $g$.
\end{prop}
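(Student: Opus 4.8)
The plan is to reduce the statement to producing a single element $s\in B^\#$ that lies in $h(f_\zeta)$, where $f_\zeta$ is the largest fixpoint of $F$ furnished by \cref{prop:fixpoint}. By \cref{thm:extension_BA}, $h(\alpha)$ is exactly the set of elements of $B$ that represent $\alpha$ w.r.t.\ $g$, so an $s\in h(f_\zeta)\cap B^\#$ is precisely an effective representer of $f_\zeta$ w.r.t.\ $g$, which is what we must exhibit. Since $g$ is decidable and chain-continuous and $A$ is chain-complete, \cref{lem:cc_gh} tells us that $h$ is chain-continuous; this, together with chain-completeness of $\mathcal{B}A$ (\cref{lem:cc_BA}), will be the key ingredient at limit stages.

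Next I would fix a $t\in B^\#$ effectively representing $F$ w.r.t.\ $h$, so that $t\cdot h(\alpha)$ is defined and $t\cdot h(\alpha)\subseteq h(F(\alpha))$ for all $\alpha\in\mathcal{B}A$, and then set $s:=\sf{z}(\lambda^\ast xy.txy)\in B^\#$ using the guarded fixpoint operator. Then $s$ is defined, lies in $B^\#$, and satisfies $s\cdot b\preceq (ts)\cdot b$ for every $b\in B$. The claim is that $s\in h(f_\gamma)$ for every ordinal $\gamma$, proved by transfinite induction following the recursion for the $f_\gamma$ in the proof of \cref{prop:fixpoint}. For $\gamma=0$ we have $f_0=\emptyset$, and $h(\emptyset)=B$ since the defining condition in \cref{thm:extension_BA} is vacuous, so $s\in h(f_0)$. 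At a limit $\lambda$, chain-continuity of $h$ gives $h(f_\lambda)=h\bigl(\bigwedge_{\kappa<\lambda}f_\kappa\bigr)=\bigcap_{\kappa<\lambda}h(f_\kappa)$, and $s$ belongs to each $h(f_\kappa)$ by the induction hypothesis, hence to $h(f_\lambda)$.

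The successor step is where the choice of $s$ is used. Assuming $s\in h(f_\gamma)$, we get $ts\in t\cdot h(f_\gamma)\subseteq h(F(f_\gamma))=h(f_{\gamma+1})$, so $ts$ represents $f_{\gamma+1}$ w.r.t.\ $g$. Given $a\in\dom f_{\gamma+1}$ and $b\in g(a)$, this means $(ts)b$ is defined and $(ts)b\in g(f_{\gamma+1}(a))$; since $s\cdot b\preceq (ts)\cdot b$, it follows that $sb$ is defined and $sb\leq (ts)b$, and hence $sb\in g(f_{\gamma+1}(a))$ because that set is downwards closed. Therefore $s\cdot g(a)$ is defined and contained in $g(f_{\gamma+1}(a))$ for every $a\in\dom f_{\gamma+1}$, i.e.\ $s\in h(f_{\gamma+1})$, which closes the induction. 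Evaluating at $\gamma=\zeta$ gives $s\in h(f_\zeta)\cap B^\#$, so $f_\zeta$ is effectively representable w.r.t.\ $g$.

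The main obstacle I anticipate is that one cannot simply iterate $t$ a finite number of times: the fixpoint $f_\zeta$ is only reached after transfinitely many approximations, and $B$ has no order structure with which to take a ``limit of realizers''. The remedy is exactly the guarded-fixpoint trick: it produces a single realizer $s$ with $s\leq ts$ pointwise, so that membership of $s$ in $h(f_\gamma)$ propagates automatically to $h(f_{\gamma+1})$, while chain-continuity of $h$ carries $s$ through the limit stages. A secondary, purely technical point is keeping track of the hidden definedness assertions in the notation $s\cdot g(a)$ and $t\cdot h(\alpha)$, but these are discharged automatically from the fact that $ts$ genuinely represents $f_{\gamma+1}$ together with $s\cdot b\preceq(ts)\cdot b$.
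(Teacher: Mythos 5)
Your proposal is correct and follows essentially the same route as the paper: apply the guarded fixpoint operator to a representer of $F$ to obtain a single element $s$ with $sb\preceq (ts)b$, then show $s\in h(f_\gamma)$ for all ordinals $\gamma$ by transfinite induction, using $h(\emptyset)=B$ at the base, the guarded-fixpoint inequality at successors, and chain-continuity of $h$ (\cref{lem:cc_gh}) at limits. The only (immaterial) difference is that the paper takes $s=\sf{z}r$ directly rather than $\sf{z}(\lambda^\ast xy.txy)$.
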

\begin{proof}
Let $\sf{z}\in B^\#$ be the guarded fixpoint operator. We will show that, if $r\in B$ represents $F$ w.r.t.\@ $h$, then $\sf{z}r$ (which is always defined) represents the largest fixpoint of $F$ w.r.t.\@ $g$. Clearly, this implies the proposition, for we have $\sf{z}r\in B^\#$ if $r\in B^\#$.

So suppose that $r\in B$ represents $F$ w.r.t.\@ $h$. Define the sequence $f_\gamma$ as in the proof of \cref{prop:fixpoint}, so that the largest fixpoint of $F$ is $f_\zeta$ for some ordinal $\zeta$. We will show, using transfinite induction, that $\sf{z}r\in h(f_\gamma)$ for all ordinals $\gamma$. In particular, we will have $\sf{z}r\in h(f_\zeta)$, which means that $\sf{z}r$ represents $f_\zeta$ w.r.t.\@ $g$, as desired.

First of all, we have $h(f_0) = h(\emptyset) = B$, so the base case is trivial. Now suppose that $\sf{z}r\in h(f_\gamma)$ for a certain ordinal $\gamma$. Then $r(\sf{z}r)\in h(F(f_\gamma)) = h(f_{\gamma+1})$, i.e., $r(\sf{z}r)$ represents $f_{\gamma+1}$ w.r.t.\@ $g$. Now, since $\sf{z}rb\preceq r(\sf{z}r)b$ for all $b\in B$, it follows that $\sf{z}r$ also represents $f_{\gamma+1}$ w.r.t.\@ $g$, i.e., $\sf{z}r\in h(f_{\gamma+1})$. Finally, \cref{lem:cc_gh} tells us that $h$ is chain-continuous, from which the limit case immediatelt follows. This completes the induction.
\end{proof}

Now let us introduce the type-2 functionals that we want to adjoin to a PCA $A$.

\begin{defn}
Let $A$ be a PCA.
\begin{itemize}
\item[(i)]	The set $\mathcal{B}_2A$ is defined as the set of all partial functions $\mathcal{B}A\pf A$ such that $\alpha\leq \beta$ implies $F(\alpha)\preceq F(\beta)$ for all $\alpha,\beta\in\mathcal{B}A$.
\item[(ii)]	For $F,G\in\mathcal{B}_2A$, we say that $F\leq G$ if $F(\alpha)\preceq G(\alpha)$ for all $\alpha\in\mathcal{B}A$.
\end{itemize}
\end{defn}

\begin{rem}
Observe that, even for discrete PCAs $A$, the set $\mathcal{B}_2A$ does not consist of all partial functions $\mathcal{B}A\pf A$. Indeed, in the discrete case, we have that $F\in\mathcal{B}_2A$ if and only if the following holds: whenever $\alpha$ is a subfunction of $\beta$ and $F(\alpha)\denotes$, we have that $F(\beta)$ is defined as well and equal to $F(\alpha)$. One may view this as an `extensionality' requirement: if $F(\alpha)$ is defined, then this must be based solely on the values specified by $\alpha$. Note, however, that we do not require that $F(\alpha)\denotes$ implies that the value of $F(\alpha)$ is already determined by \emph{finitely many} values of $\alpha$. In other words, it is possible that $F$ `consults' infinitely many values of $\alpha$. The paper \cite{type2} shows how to adjoin partial functions $A^A\pf A$, which are always in $\mathcal{B}_2A$ (if $A$ is discrete).
\end{rem}

\begin{defn}
Let $A$ be a PCA and let $F\in\mathcal{B}_2A$.
\begin{itemize}
\item[(i)]	We say that $r\in A$ \emph{represents} $F$ if: whenever $\alpha\in\dom F$ and $a\in A$ represents $\alpha$, we have $ra\leq F(\alpha)$. We say that $F$ is representable (resp.\@ effectively representable) if $F$ is represented by some $r\in A$ (resp.\@ $r\in A^\#$).
\item[(ii)]	If $g\colon A\pf B$ is a partial applicative morphism, then we say that $s\in B$ \emph{represents} $F$ w.r.t.\@ $g$ if: whenever $\alpha\in \dom F$ and $b\in B$ represents $\alpha$ w.r.t.\@ $g$, we have $sb\in g(F(\alpha))$. We say that $F$ is representable (resp.\@ effectively representable) w.r.t.\@ $g$ if $F$ is represented w.r.t.\@ $g$ by some $s\in B$ (resp.\@ $s\in B^\#$).
\end{itemize}
\end{defn}

Again, the first item is a special case of the second item, by taking $g=\id_A$.

\begin{rem}
The effective representability of second-order functionals is \emph{not} transferable along partial applicative morphisms. Below, we present an instructive example of this phenenomenon. It is also the reason why \cref{thm:type2} below cannot be restated as a `universal property' in the same vein as \cref{cor:universal_Af}. In this sense, the representability of second-order functionals is categorically ill-behaved when compared to the first-order case.
\end{rem}

\begin{ex}\label{ex:no_transfer}
Consider Kleene's first model $\mathcal{K}_1$ and let $F\colon \mathbb{N}^\mathbb{N}\to\mathbb{N}$ be defined by:
\[
F(f) = \begin{cases}
0 &\mbox{if }f\mbox{ is recursive};\\
1 &\mbox{otherwise}.
\end{cases}
\]
Since the domain of $F$ consists only of total functions, which form a discrete subset of $\mathcal{BK}_1$, we automatically have $F\in\mathcal{B}_2\mathcal{K}_1$. Moreover, $F$ is effectively representable in $\mathcal{K}_1$. Indeed, the only representable total functions in $\mathcal{K}_1$ are, by definition, the recursive functions, and $F$ is constant on those. However, $F$ is not even representable w.r.t.\@ $i\colon \mathcal{K}_1\to\mathcal{BK}_1$, let alone effectively so. Indeed, the representability of $F$ w.r.t.\@ $i$ would imply that the function $\hat{F}\in\mathcal{BBK}_1$ given by:
\[
\hat{F}(f) = \widehat{F(f)} = \begin{cases}
\hat{0} &\mbox{if }f\mbox{ is recursive};\\
\hat{1} &\mbox{otherwise}.
\end{cases}
\]
is representable in $\mathcal{BK}_1$. For, given $f\in\mathcal{BK}_1$, we can first effectively find a representer of $f$ w.r.t.\@ $i$, and then use the representer for $F$ w.r.t.\@ $i$ to obtain $\widehat{F(f)} = \hat{F}(f)$. However, $\hat{F}$ cannot be representable in $\mathcal{BK}_1$. Indeed, suppose that it is represented by $\rho\in\mathcal{BK}_1$. Then
\[
F(f) = \hat{F}(f)(0) = \rho f(0)
\]
for all $f\colon\mathbb{N}\to\mathbb{N}$. In particular, $\rho\hat{0}(0) = 0$. This computation consults the oracle $\hat{0}$ only finitely many times, so there exists an $N$ such that $F(f) = \rho f(0) = 0$ for \emph{all} $f\colon\mathbb{N}\to\mathbb{N}$ with $f(n) = 0$ for all $n<N$. This is clearly a contradiction, since this includes non-recursive $f$. (More generally, the point here is that a representer $\rho$ can consult only finitely many values of an $f\colon\mathbb{N}\to\mathbb{N}$ to determine $\rho f(0)$, but this does not suffice to determine whether $f$ is recursive.)
\end{ex}

Now we state and prove the main result of this section.
\begin{thm} \textup{(}Cf.\@ \cite{type2}, Theorem 3.1.\textup{)}\label{thm:type2}
Let $A$ be a chain-complete PCA and let $F\in\mathcal{B}_2A$. Then there exists an $f\in\mathcal{B}A$ such that:
\begin{itemize}
\item[\textup{(}i\textup{)}]	$F$ is effectively representable w.r.t.\@ $\iota_f\colon A\to A[f]$;
\item[\textup{(}ii\textup{)}]	if $g\colon A\pf B$ is decidable and chain-continuous, and $F$ is effectively representable w.r.t.\@ $g$, then $g$ factors, up to isomorphism, through $\iota_f$.
\end{itemize}
\end{thm}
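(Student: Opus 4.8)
The plan is to produce the required $f$ as the largest fixpoint of a total self-map of $\mathcal{B}A$ built from $F$, and then feed this into the fixpoint machinery of \cref{prop:fixpoint,prop:fixpoint_rep}. Define $\Phi\colon\mathcal{B}A\to\mathcal{B}A$ by
\[
\Phi(g)(a)\;:\simeq\;F\bigl(\lambda x.\,a\odot_g x\bigr),
\]
where $\odot_g$ is the application of the PCA $A[g]$ of \cref{defn:Af_PAP}. First I would check that $\Phi$ is a well-defined, \emph{total} element of $\mathcal{BB}A$. That $\Phi(g)\in\mathcal{B}A$ holds because $x\mapsto a\odot_g x$ always lies in $\mathcal{B}A$ (axiom (A) for $A[g]$ in the input coordinate), and $a'\le a$ forces $\lambda x.\,a'\odot_g x\le\lambda x.\,a\odot_g x$, so $F\in\mathcal{B}_2A$ gives both downward closure of the domain of $\Phi(g)$ and its monotonicity. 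That $\Phi$ itself is order-preserving rests on a small lemma: a more informative oracle yields more informative interrogations, i.e. $g\le g'$ implies $a\odot_g x\preceq a\odot_{g'}x$ for all $a,x$; this is proved by the same induction on the length of the interrogation as the check that $A[g]$ is a PAP, using that $A$ is not semitrivial to pin down the lengths. Then $g\le g'$ gives $\lambda x.\,a\odot_g x\le\lambda x.\,a\odot_{g'}x$, hence $\Phi(g)\le\Phi(g')$ by $F\in\mathcal{B}_2A$. Since $\Phi\in\mathcal{BB}A$ is total and $A$ is chain-complete, \cref{prop:fixpoint} yields a largest fixpoint $f\in\mathcal{B}A$, so that $f(a)\simeq F(\lambda x.\,a\odot_f x)$ for every $a$. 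This $f$ is the witness.

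For (i): I claim the combinator $r_f\in A^\#\subseteq A[f]^\#$ introduced in \cref{sec:order1}, which satisfies $r_f\odot a\preceq f(a)$ for all $a$, already represents $F$ w.r.t. $\iota_f$. Suppose $\alpha\in\dom F$ and $b\in A[f]$ represents $\alpha$ w.r.t. $\iota_f$, i.e. $b\odot x\preceq\alpha(x)$ for all $x\in\dom\alpha$. Put $\beta:=\lambda x.\,b\odot_f x\in\mathcal{B}A$; then $\beta\le\alpha$ (the defining inequality holds on $\dom\alpha$ and is vacuous off it), so $F(\beta)\preceq F(\alpha)$ and $F(\beta)$ is defined. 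Since $r_f\odot b\preceq f(b)=\Phi(f)(b)=F(\beta)$ and the right-hand side is defined, $r_f\odot b$ is defined with $r_f\odot b\le F(\beta)\le F(\alpha)$, i.e. $r_f\odot b\in\iota_f(F(\alpha))$. As $r_f\in A[f]^\#$, this establishes (i).

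For (ii): because $g$ is decidable, \cref{thm:universal_Af} reduces the claim to showing that $f$ is effectively representable w.r.t. $g$. Let $h\colon\mathcal{B}A\pf B$ be the largest partial applicative morphism with $hi\simeq g$ from \cref{thm:extension_BA}. Since $f$ is the largest fixpoint of the total $\Phi\in\mathcal{BB}A$ and $g$ is decidable and chain-continuous, \cref{prop:fixpoint_rep} further reduces the task to showing that $\Phi$ is effectively representable w.r.t. $h$, that is, to producing a $t\in B^\#$ with $t\cdot h(g')\subseteq h(\Phi(g'))$ for all $g'\in\mathcal{B}A$. Fix a tracker $t_g\in B^\#$ and a decider $d\in B^\#$ for $g$, the auxiliary combinators $\sf{p}'_0,\sf{p}'_1,\sf{unit}',\sf{ext}'\in B^\#$ as in the proofs of \cref{thm:universal_Af,thm:extension_BA}, and an $s_F\in B^\#$ representing $F$ w.r.t. $g$. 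Using the fixpoint operator in $B$, build a $V\in B^\#$ so that whenever $b$ represents $g'$ w.r.t. $g$ and $c\in g(a)$, the element $\lambda^\ast x.\,Vbcx$ represents $\lambda x.\,a\odot_{g'}x$ w.r.t. $g$: on a $g$-representative of an input $x$, $V$ simulates the $\odot_{g'}$-interrogation, running $c$ on the code of the current query list via $t_g$, using $d\circ\sf{p}'_0$ to detect termination and, when the computation has not halted, sending the queried value (extracted by $\sf{p}'_1$) through $b$ to obtain a $g$-representative of the next oracle answer, which is appended via $\sf{ext}'$. For inputs in $\dom(\lambda x.\,a\odot_{g'}x)$ the interrogation is finite, so this terminates with a $g$-representative of $a\odot_{g'}x$. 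Now set $t:=\lambda^\ast yz.\,s_F(\lambda^\ast x.\,Vyzx)\in B^\#$. For $b\in h(g')$ and $a\in\dom\Phi(g')$ one has $\lambda x.\,a\odot_{g'}x\in\dom F$, so $s_F$ applied to the $g$-representative produced by $V$ lands in $g(F(\lambda x.\,a\odot_{g'}x))=g(\Phi(g')(a))$; hence $tb\in h(\Phi(g'))$, as needed.

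I expect the construction of $V$, and hence of $t$, to be the main obstacle: it is the one-order-up analogue of the tracker constructions in \cref{thm:universal_Af,thm:extension_BA}, and it must correctly interleave the simulation of the interrogation of $g'$ by $a$ with the use of the two representers $b$ and $s_F$, all while staying inside $B^\#$. The remaining ingredients — the oracle-monotonicity lemma, the verification that $\Phi\in\mathcal{BB}A$ is total, and the definedness/monotonicity bookkeeping in parts (i) and (ii) — are routine given the tools already developed.
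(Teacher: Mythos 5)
Your proposal is correct and follows essentially the same route as the paper: the same auxiliary functional $\tilde F(\alpha)(a)\simeq F(\lambda a'.\,a\odot_\alpha a')$ (your $\Phi$), the same largest-fixpoint construction via \cref{prop:fixpoint}, the same observation for (i) that a representer of $f$ w.r.t.\ $\iota_f$ already represents $F$, and the same reduction for (ii) through \cref{thm:universal_Af} and \cref{prop:fixpoint_rep} to representing $\tilde F$ w.r.t.\ $h$ by the combinator $\lambda^\ast xy.\,s(\lambda^\ast z.\,rxyz)$. You merely spell out a few verifications (oracle-monotonicity, well-definedness of $\tilde F$) that the paper leaves to the reader.
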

\begin{proof}
Define the total function $\tilde{F}\in\mathcal{BB}A$ by:
\[
\tilde{F}(\alpha)(a) \simeq F(\lambda a'.a\odot_\alpha a').
\]
We leave it to the reader to show that $\tilde{F}$ is well-defined and an element of $\mathcal{BB}A$. Let $f\in\mathcal{B}A$ be the largest fixpoint of $\tilde{F}$, whose existence was asserted in \cref{prop:fixpoint}.

(i). Suppose that $a\in A$ represents $\alpha\in\dom F$ with respect to $\iota_f$. Then we have that $\lambda a'. a\odot_f a' \leq \alpha$, which implies that $f(a) \simeq F(\lambda a'.a\odot_f a')$ is defined and $f(a)\leq F(\alpha)$. So any representer of $f$ w.r.t.\@ $\iota_f$ also represents $F$ w.r.t.\@ $\iota_f$. Since $f$ is, by construction, effectively representable w.r.t.\@ $\iota_f$, it follows that $F$ is effectively representable w.r.t.\@ $\iota_f$ as well.

(ii). Let $h\colon\mathcal{B}A\pf B$ be the largest partial applicative morphism such that $hi\simeq g$. In order to show that $g$ factors through $\iota_f$, it suffices to show that $f$ is effectively representable w.r.t.\@ $g$. According to \cref{prop:fixpoint_rep}, this follows if we show that $\tilde{F}$ is effectively representable w.r.t.\@ $h$.

Let $s\in B^\#$ be such that $s\cdot h(\alpha)\subseteq g(F(\alpha))$ for all $\alpha\in\dom F$. By employing the usual kind of fixpoint argument, we may construct an $r\in B^\#$ such that $r\cdot h(\alpha) \cdot g(a)\cdot g(a') \subseteq g(a\odot_\alpha a')$ whenever $a\odot_\alpha a'\denotes$. Now we claim that $t := \lambda^\ast xy. s(\lambda^\ast z.rxyz)\in B^\#$ represents $\tilde{F}$ w.r.t.\@ $h$.

In order to prove this, consider $\alpha\in\mathcal{B}A$ and $a\in A$ such that $\tilde{F}(\alpha)(a)$ is defined, and take $b\in h(\alpha)$ and $c\in g(a)$. Moreover, consider $a'\in A$ such that $a\odot_\alpha a'$ is defined, and $c'\in g(a')$. Then we have that $((\lambda^\ast z.rxyz)[b/x,c/y])\cdot c'\preceq rbcc'\in g(a\odot_\alpha a')$, which means that $(\lambda^\ast z.rxyz)[b/x,c/y]$ represents $\lambda a'. a\odot_\alpha a'$. This implies that
\[
tbc \preceq s \cdot ((\lambda^\ast z.rxyz)[b/x,c/y]) \in g(F(\lambda a'.a\odot_\alpha a')) = g\left(\tilde{F}(\alpha)(a)\right).
\]
We can conclude that $tb$ represents $\tilde{F}(\alpha)$ w.r.t.\@ $g$, in other words, that $tb\in h(\tilde{F}(\alpha))$, as desired.
\end{proof}

If $A$ is chain-complete and $F\in \mathcal{B}_2A$, then we will denote $A[f]$ and $\iota_f$ constructed above by $A[F]$ and $\iota_F$, respectively.

We close this section with the counterexample announced at the end of \cref{ex:id_is_max}. It shows that, once we add an oracle to $\mathcal{B}A$, elements of $\mathcal{B}A$ and their representers can start to behave very differently. This is a serious obstruction to studying higher-order computability on $A$ by means of $\mathcal{B}A$.

\begin{ex}\label{ex:counterexample}
As in \cref{ex:no_transfer}, we let $A$ be Kleene's first model $\mathcal{K}_1$. Consider the \emph{Kleene functional} $E\colon\mathbb{N}^\mathbb{N}\to \mathbb{N}$ defined by:
\[
E(f) = \begin{cases}
0 &\mbox{if }\forall n\in\mathbb{N}\s (f(n)=0);\\
1 &\mbox{if }\exists n\in\mathbb{N}\s (f(n)>0),
\end{cases}
\]
which is in $\mathcal{B}_2\mathcal{K}_1$ since its domain consists of total functions. Also, as in \cref{ex:no_transfer}, we define $\hat{E}\in\mathcal{BBK}_1$ by $\hat{E}(f) = \widehat{E(f)}$ for $f\colon\mathbb{N}\to\mathbb{N}$. We will now proceed to show the following things about the composition
\[
\mathcal{K}_1\stackrel{i}{\longrightarrow}\mathcal{BK}_1\stackrel{\iota_{\hat{E}}}{\longrightarrow} \mathcal{BK}_1[\hat{E}].
\]
\begin{enumerate}
\item	Every total function $\mathbb{N}\to\mathbb{N}$ which is representable w.r.t.\@ $\iota_{\hat{E}}\circ i$ is arithmetical.
\item	The composition $\iota_{\hat{E}}\circ i$ does \emph{not} factor through $\iota_E\colon \mathcal{K}_1\to\mathcal{K}_1[E]$.
\item	The second-order functional $E$ is \emph{not} effectively representable w.r.t. $\iota_{\hat{E}}\circ i$.
\item	The morphism $\iota_{\hat{E}}$ is \emph{not} the largest partial applicative morphism $h\colon \mathcal{BK}_1\pf\mathcal{BK}_1[\hat{E}]$ such that $hi\simeq \iota_{\hat{E}}\circ i$.
\end{enumerate}

For the first claim, we first observe the following: since the range of $\hat{E}$ is simply $\{\hat{0},\hat{1}\}\subseteq (\mathcal{BK}_1)^\#$, we have $\left(\mathcal{BK}_1[\hat{E}]\right)^\# = (\mathcal{BK}_1)^\#$, which is just the set of partial recursive functions. Now, if $\alpha,\beta,\gamma\in\mathcal{BK}_1$ are partial functions, then the relation `$\alpha\beta = \gamma$' can be expressed arithmetically in terms of the graphs of $\alpha$, $\beta$ and $\gamma$, as is immediate from the definition of application in $\mathcal{BK}_1$. Moreover, if $\alpha\in\mathcal{BK}_1$ and $i\in\mathbb{N}$, then $\hat{E}(\alpha) = \hat{i}$ can be expressed arithmetically in terms of $i$ and the graph of $\alpha$. And we know that all the combinators in $\mathcal{BK}_1$ are partial recursive functions, and therefore have arithmetically expressible graphs. Finally, we assume for simplicity that the booleans in $(\mathcal{BK}_1)^\#$ are simply $\hat{0}$ and $\hat{1}$, which we can, because $i\colon\mathcal{K}_1\to\mathcal{BK}_1$ is decidable. Now suppose that $f\colon \mathbb{N}\to\mathbb{N}$ is represented w.r.t.\@ $\iota_{\hat{E}}\circ i$ by $\rho\in \left(\mathcal{BK}_1[\hat{E}]\right)^\# = (\mathcal{BK}_1)^\#$. Then $\rho$ is partial recursive, and since $f$ is total, we have $f(a) = b$ iff $\rho\odot_{\hat{E}} \hat{a} = \hat{b}$. We see that $\rho\odot_{\hat{E}} \hat{a} = \hat{b}$ holds iff there exists a coded sequence $u = [u_0, \ldots, u_{n-1}]$ of \emph{natural numbers}, such that:
\begin{itemize}
\item	for all $i<n$, we have $\sf{p}_0(\rho\cdot [\hat{a}, \hat{u}_0, \ldots, \hat{u}_{i-1}]) = \hat{0}$ and $\hat{E}\left(\sf{p}_1(\rho\cdot [\hat{a}, \hat{u}_0, \ldots, \hat{u}_{i-1}])\right) = \hat{u}_i$, and:
\item	$\sf{p}_0(\rho\cdot [\hat{a}, \hat{u}_0, \ldots, \hat{u}_{n-1}]) = \hat{1}$ and $\sf{p}_1(\rho\cdot [\hat{a}, \hat{u}_0, \ldots, \hat{u}_{n-1}]) = \hat{b}$.
\end{itemize}
By the remarks above, this is an arithmetical relation in terms of $a$ and $b$, so we see that `$f(a)=b$' is arithmetical, as desired.

The paper \cite{type2} shows that, when \cref{thm:type2} is applied to Kleene's first model, the result is equivalent to Kleene's original notion of computability w.r.t.\@ a higher-order functional. In particular, the partial functions $\mathbb{N}\pf\mathbb{N}$ which are effectively representable w.r.t.\@ $\iota_E$ are precisely the \emph{hyper}arithmetical functions (\cite{type2}, Corollary 4.1). Now, if $\iota_{\hat{E}}\circ i$ were to factor through $\iota_E$, then every hyperarithmetical functions would be effectively representable w.r.t.\@ $\iota_{\hat{E}}\circ i$ as well. But claim 1 tells us that this is not the case, since there are certainly total hyperarithmetical functions which are not arithmetical (e.g., the characteristic function of a hyperarithemtical set which is not arithmetical).

Claim 3 immediately follows from claim 2 by \cref{thm:type2}.

Finally, in order to prove claim 4, suppose for the sake of contradiction that $\sigma \in (\mathcal{BK}_1)^\#$ realizes the inequality $h\leq\iota_{\hat{E}}$, where $h(\alpha) = \{\beta\in\mathcal{BK}_1\mid\beta\mbox{ represents }\alpha\mbox{ w.r.t.\@ }\iota_{\hat{E}}\circ i\}$. Moreover, let $\rho\in (\mathcal{BK}_1)^\#$ represent $\hat{E}$ w.r.t.\@ $\iota_{\hat{E}}$. Then it easily follows that
\[
\lambda^\ast x. \rho \odot_{\hat{E}} (\sigma\odot_{\hat{E}} x)\in (\mathcal{BK}_1)^\#
\]
represents $E$ w.r.t.\@ $\iota_{\hat{E}}\circ i$, which is not the case by claim 3.
\end{ex}


\section{The third-order case}\label{sec:type3}

In this final section, we investigate what can be achieved in the case of \emph{third}-order functionals. First, let us introduce the objects considered in this section.

\begin{defn}\label{defn:B3A}
Let $A$ be a PCA.
\begin{itemize}
\item[(i)]	The set $\mathcal{B}_3A$ consists of all partial functions $\Phi\colon \mathcal{B}_2A\pf A$ such that $\Phi(F)\preceq \Phi(G)$ whenever $F\leq G$.
\item[(ii)]	We say that $r\in A$ \emph{represents} $\Phi\in\mathcal{B}_3A$ if: whenever $F\in\dom \Phi$ and $a\in A$ represents $F$, we have $ra\leq \Phi(F)$. We say that $\Phi$ is representable (resp.\@ effectively representable) if $\Phi$ is represented by some $r\in A$ (resp.\@ $r\in A^\#$).
\item[(iii)]	If $g\colon A\pf B$ is a partial applicative morphism, then we say that $s\in B$ \emph{represents} $\Phi$ w.r.t.\@ $g$ if: whenever $F\in \dom \Phi$ and $b\in B$ represents $F$ w.r.t.\@ $g$, we have $sb\in g(\Phi(F))$. We say that $\Phi$ is representable (resp.\@ effectively representable) w.r.t.\@ $g$ if $\Phi$ is represented w.r.t.\@ $g$ by some $s\in B$ (resp.\@ $s\in B^\#$).
\end{itemize}
\end{defn}

As the authors of \cite{type2} mention as well, there is a fundamental obstacle when studying the representability of third-order functionals. A PCA $A$ can only `talk about' first-order functions by means of their representers. So as far as $A$ is concerned, only representable functions really exist. For representing second-order functionals $F$, this is not a problem. On the contrary, is makes the task easier: if a first-order function $\alpha$ does not have a representer, then we do not have to worry about $\alpha$ when constructing a representer for $F$. In other words, from the point of view of $A$, the domain of $F$ may look smaller than it actually is.

For a third-order functional $\Phi$, on the other hand, the second-order functionals serve as \emph{inputs}, and $A$ may lose information contained in these inputs. More precisely, there may be two distinct $F,G\in \mathcal{B}_2A$ such that $\Phi(F)$ and $\Phi(G)$ do not have a common lower bound, but which are the same from the point of view of $A$. Let us give an example of this phenemenon.

\begin{ex}\label{ex:3rd_order_bad}
Again, we let $A$ be Kleene's first model $\mathcal{K}_1$. Consider the third-order functional $\Phi\in\mathcal{B}_3\mathcal{K}_1$ defined by $\dom\Phi = \{F\in\mathcal{B}_2\mathcal{K}_1\mid \mathbb{N}^\mathbb{N}\subseteq \dom F\}$ and
\[
\Phi(F) = \begin{cases}
0 &\mbox{if }\forall f\in\mathbb{N}^\mathbb{N}\s (F(f)=0);\\
1 &\mbox{if }\exists f\in\mathbb{N}^\mathbb{N}\s (F(f)>0).
\end{cases}
\]
We leave it to the reader to check that $\Phi$ is actually in $\mathcal{B}_3\mathcal{K}_1$. Recall from \cref{ex:no_transfer} the functional $F\colon \mathbb{N}^\mathbb{N}\to\mathbb{N}$ defined by
\[
F(f) = \begin{cases}
0 &\mbox{if }f\mbox{ is recursive};\\
1 &\mbox{otherwise}.
\end{cases}
\]
Consider also the function $G\in\mathcal{B}_2\mathcal{K}_1$, which is 0 on total functions, and undefined on non-total functions. Then we clearly have $\Phi(F) = 1 \neq 0 = \Phi(G)$. However, from the point of view of $\mathcal{K}_1$, the functionals $F$ and $G$ are equal, and both are represented by an index for the constant 0 function.

This does not yet exclude the possibility that, as for the second-order case, we can construct a partial function $g\colon\mathbb{N}\pf\mathbb{N}$ such that $\Phi$ becomes representable w.r.t.\@ $\iota_g\colon \mathcal{K}_1\to\mathcal{K}_1[g]$. However, we can adjust the example above to show that this is impossible as well. Define $F_g\colon\mathbb{N}^\mathbb{N}\to\mathbb{N}$ by:
\[
F_g(f) = \begin{cases}
0 &\mbox{if }f\mbox{ is representable w.r.t.\@ } \iota_g;\\
1 &\mbox{otherwise}.
\end{cases}
\]
Once again, we have $\Phi(F_g) = 1 \neq 0 = \Phi(G)$, but $A[g]$ cannot distinguish between $F_g$ and $G$. This means that $\Phi$ is not representable (let alone \emph{effectively} representable) w.r.t.\@ to $\iota_g$ for \emph{any} partial function $g$.
\end{ex}

This example shows that a construction as in the previous section, where $A[F]$ was of the form $A[f]$ for an $f\in\mathcal{B}A$, simply cannot work in the third-order case. On the other hand, the example above would clearly be blocked if we move to a PCA in which every (partial) function is representable. Fortunately, we have such a PCA, namely $\mathcal{B}A$, which allows us to prove the following `lax' result about the third-order case.

\begin{thm}\label{thm:type3}
Let $A$ be a chain-complete PCA and let $\Phi\in\mathcal{B}_3A$. Then there exists a c.d.\@ total applicative morphism $\iota_\Phi\colon A\to A[\Phi]$ such that:
\begin{itemize}
\item[\textup{(}i\textup{)}]	$\Phi$ is effectively representable w.r.t.\@ $\iota_\Phi$;
\item[\textup{(}ii\textup{)}]	if $g\colon A\pf B$ is decidable and chain-continuous, and $\Phi$ is effectively representable w.r.t.\@ $g$, then there exists a \emph{largest} $h\colon A[\Phi]\pf B$ such that $h\iota_\Phi \simeq g$.
\end{itemize}
\end{thm}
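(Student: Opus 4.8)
The plan is to push $\Phi$ down one level, to the PCA $\mathcal{B}A$, where it becomes a \emph{second}-order functional, and then to invoke \cref{thm:type2} for $\mathcal{B}A$ in place of $A$; this is legitimate since $\mathcal{B}A$ is chain-complete by \cref{lem:cc_BA} and not semitrivial. The passage from third order on $A$ to second order on $\mathcal{B}A$ is made through the order-embedding $\mathcal{B}_2A\hookrightarrow\mathcal{B}(\mathcal{B}A)$, $F\mapsto\hat F$, with $\hat F(\alpha):=\widehat{F(\alpha)}$.

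First I would build a $\Psi\in\mathcal{B}_2(\mathcal{B}A)$ encoding $\Phi$: put $\Psi(\hat F):=\widehat{\Phi(F)}$ for $F\in\dom\Phi$ and extend $\Psi$ along the downwards closure of $\{\hat F\mid F\in\dom\Phi\}$ in $\mathcal{B}(\mathcal{B}A)$. In the discrete case this is straightforward: if $\nu\le\hat F$ and $\nu\le\hat G$ with $F,G\in\dom\Phi$, then any value of $\nu(\alpha)$ is a common lower bound of $F(\alpha)$ and $G(\alpha)$, hence $F$ and $G$ agree on $\dom F\cap\dom G$; the functional agreeing with $F$ on $\dom F$ and with $G$ on $\dom G$ then lies in $\dom\Phi$ and is below both, so $\Phi(F)=\Phi(G)$, and $\Psi$ extends (constantly on cones) to an element of $\mathcal{B}_2(\mathcal{B}A)$. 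For general chain-complete $A$ one argues in the same spirit, but must invoke chain-completeness to make sense of the meets involved; verifying that the resulting $\Psi$ is order-preserving with downwards closed domain is routine but somewhat delicate. Granting $\Psi$, I set $A[\Phi]:=\mathcal{B}A[\Psi]$ and $\iota_\Phi:=\iota_\Psi\circ i\colon A\to\mathcal{B}A\to\mathcal{B}A[\Psi]$, where $i$ is as in \cref{prop:constant} and $\iota_\Psi,\mathcal{B}A[\Psi]$ are produced by \cref{thm:type2} for $\mathcal{B}A$ and $\Psi$. Since $i$ and $\iota_\Psi$ are total and c.d.\ (\cref{prop:constant}, \cref{prop:iotaf}) and these properties are closed under composition (\cref{prop:properties_properties}), $\iota_\Phi$ is a total c.d.\ applicative morphism.

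The technical heart of the proof is a translation lemma: \emph{for every decidable, chain-continuous $g\colon A\pf B$, letting $h_1\colon\mathcal{B}A\pf B$ be the largest partial applicative morphism with $h_1 i\simeq g$ (\cref{thm:extension_BA}), $\Phi$ is effectively representable w.r.t.\ $g$ if and only if $\Psi$ is effectively representable w.r.t.\ $h_1$.} For ``$\Phi\Rightarrow\Psi$'' I would use the description $h_1(\mu)=\{b\mid b\ \text{represents}\ \mu\ \text{w.r.t.}\ g\}$ from \cref{thm:extension_BA}: since $g$ is decidable, representers w.r.t.\ $g$ of a constant function $\widehat c$ translate effectively to and from elements of $g(c)$, so a representer of $\nu\in\dom\Psi$ w.r.t.\ $h_1$ yields, uniformly, a genuine $g$-representer of every $F\in\dom\Phi$ with $\nu\le\hat F$; feeding this to a representer of $\Phi$ w.r.t.\ $g$ and repackaging produces a representer of $\Psi(\nu)$ w.r.t.\ $g$, with chain-continuity of $g$ used to pass from $\bigcap_F g(\Phi(F))$ to $g\big(\Psi(\nu)(a)\big)$. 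The converse direction is symmetric, using \cref{thm:type2}(i) for $\mathcal{B}A$ and that, by \cref{prop:everything_rep} and \cref{ex:id_is_max}, elements of $\mathcal{B}A$ and their $i$-representers translate effectively into one another. I expect this lemma to be the main obstacle, and in particular the indispensable appeal to the \emph{maximality} of $h_1$: it is maximality that makes ``$h_1$-representers'' range over representers of \emph{all} functionals in $\mathcal{B}_2A$, matching the fact that $\dom\Phi$ lives inside the whole of $\mathcal{B}_2A$. This is exactly where the ``double life'' of elements of $\mathcal{B}A$ (cf.\ \cref{ex:counterexample}) must be confronted rather than circumvented, and it is what forces $A[\Phi]$ to be built over $\mathcal{B}A$ rather than over $A$ --- the source of the ``laxness''.

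Granting the translation lemma, the two clauses follow. For (i): $\iota_\Phi=\iota_\Psi i$ is decidable (compose \cref{prop:constant}, \cref{prop:iotaf}) and chain-continuous (by \cref{lem:cc_BA} and \cref{ex:cc_proj}, using that $i$ is projective and that $\iota_\Psi$ preserves the order of $\mathcal{B}A$ on the nose), and by \cref{ex:id_is_max} the largest extension of $\iota_\Phi$ to $\mathcal{B}A$ is $\iota_\Psi$; since $\Psi$ is effectively representable w.r.t.\ $\iota_\Psi$ by \cref{thm:type2}(i), the translation lemma gives (i). For (ii): given $g\colon A\pf B$ decidable, chain-continuous, with $\Phi$ effectively representable w.r.t.\ $g$, let $h_1$ be as in the lemma; $h_1$ is decidable (\cref{prop:properties_properties}) and chain-continuous (\cref{lem:cc_gh}), so $\Psi$ is effectively representable w.r.t.\ $h_1$, and \cref{thm:type2}(ii) for $\mathcal{B}A$ yields $h\colon\mathcal{B}A[\Psi]\pf B$ with $h\iota_\Psi\simeq h_1$; hence $h\iota_\Phi=h\iota_\Psi i\simeq h_1 i\simeq g$. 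Finally $h$ is the largest such: if $h'\iota_\Phi\simeq g$, then $(h'\iota_\Psi)i\simeq g$, so $h'\iota_\Psi\le h_1$ by maximality; composing on the right with the right adjoint $k$ of $\iota_\Psi$ (which satisfies $\iota_\Psi k\simeq\id$ by \cref{prop:iotaf}) gives $h'\simeq h'\iota_\Psi k\le h_1 k\simeq h\iota_\Psi k\simeq h$. The only genuinely new work is the construction of $\Psi$ and the translation lemma.
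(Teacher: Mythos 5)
Your overall strategy --- push $\Phi$ down to a second-order functional on $\mathcal{B}A$ via $F\mapsto\hat F$, apply \cref{thm:type2} to $\mathcal{B}A$, and set $\iota_\Phi=\iota_{(\cdot)}\circ i$ --- is exactly the paper's, and your treatment of part (ii), including the maximality argument via the right adjoint of $\iota_\Psi$, is sound. But there are two genuine gaps.

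First, the construction of $\Psi$. You define it on the downward closure of $\{\hat F\mid F\in\dom\Phi\}$ ``constantly on cones'' and admit that in the general case one ``must invoke chain-completeness to make sense of the meets involved.'' The problem is that the family $\{\Phi(F)\mid \nu\leq\hat F,\ F\in\dom\Phi\}$ is not a chain, so chain-completeness gives you no meet, and likewise chain-continuity of $g$ does not license the passage from $\bigcap_F g(\Phi(F))$ to $g(\Psi(\nu)(a))$ in your translation lemma. Your own consistency check already contains the fix: the functional $H=\lambda\alpha.\nu(\alpha)(\sf{i})$ that you exhibit as a common lower bound is canonically attached to $\nu$, and the paper simply \emph{defines} $\tilde{\Phi}(\nu)(a)\simeq\Phi(\lambda\alpha.\nu(\alpha)(\sf{i}))$ on all of $\mathcal{BB}A$. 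With that definition, well-definedness, order-preservation, and the representability computation ($s$ applied to a representer of $H$ lands in $g(\Phi(H))$ directly, no intersection) all become immediate. You should replace your $\Psi$ by this $\tilde\Phi$.

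Second, and more seriously, your derivation of (i) rests on the claim that ``by \cref{ex:id_is_max} the largest extension of $\iota_\Phi$ to $\mathcal{B}A$ is $\iota_\Psi$.'' This is a misapplication: \cref{ex:id_is_max} covers adjoining an \emph{element} to the filter of $\mathcal{B}A$, and explicitly warns that the principle breaks down when one adjoins a \emph{function} on $\mathcal{B}A$ --- which is what $\iota_\Psi\colon\mathcal{B}A\to\mathcal{B}A[\Psi]$ does. Indeed, claim 4 of \cref{ex:counterexample} is precisely a counterexample ($\iota_{\hat E}$ is not the largest $h$ with $hi\simeq\iota_{\hat E}i$), and claim 3 shows that representability of $\hat E$ w.r.t.\ $\iota_{\hat E}$ does not yield representability of $E$ w.r.t.\ $\iota_{\hat E}\circ i$ --- so your translation lemma, instantiated as you propose, would ``prove'' something false. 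The reason the theorem nevertheless holds is an asymmetry your uniform lemma obscures: to represent $\Phi$ w.r.t.\ $\iota_\Psi\circ i$ one is \emph{given} a representer $\beta$ of $F$ and must manufacture a representer of $\hat F$ w.r.t.\ $\iota_\Psi$ itself (not w.r.t.\ the largest extension), which is the easy direction because \cref{thm:extension_BA} supplies a $\sigma$ converting each $\alpha\in\mathcal{B}A$ into an $(\iota_\Psi\circ i)$-representer of $\alpha$; one then feeds $\lambda^\ast y.\beta\odot(\sigma\odot y)$ to a representer of $\tilde\Phi$ w.r.t.\ $\iota_\Psi$. That direct argument, which is what the paper carries out, replaces your appeal to \cref{ex:id_is_max} and repairs part (i).
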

\begin{proof}
Define $\tilde{\Phi}\in \mathcal{B}_2\mathcal{B}A$ by
\[
\tilde{\Phi}(F)(a) \simeq \Phi(\lambda \alpha. F(\alpha)(\sf{i})) \quad\mbox{for }F\in\mathcal{BB}A\mbox{ and }a\in A.
\]
Observe that if $\tilde{\Phi}(F)$ is defined, then it is a constant function. We leave it to the reader to check that $\tilde{\Phi}$ is actually in $\mathcal{B}_2\mathcal{B}A$. As before, if $F\in \mathcal{B}_2A$, then we define $\hat{F}\in\mathcal{BB}A$ by $\hat{F}(\alpha) \simeq \widehat{F(\alpha)}$. Since 
\[
\tilde{\Phi}(\hat{F})(a) \simeq \Phi(\lambda \alpha. \hat{F}(\alpha)(\sf{i})) \simeq \Phi(\lambda\alpha.F(\alpha)) \simeq \Phi(F),
\]
we have $\tilde{\Phi}(\hat{F}) \simeq \widehat{\Phi(F)}$ for all $F\in\mathcal{B}_2A$.

We will show that $\iota_\Phi\colon A\to A[\Phi]$ can be taken to be the composition:
\[
\begin{tikzcd}[column sep=large]
A \arrow[r, "i"] & \mathcal{B}A \arrow[r, "\iota_{\tilde{\Phi}}"] & {\mathcal{B}A[\tilde{\Phi}]}
\end{tikzcd}
\]
For the sake of readbility, we will just write $\iota$ for $\iota_{\tilde{\Phi}}$, and we write $\odot$ for the application in $\mathcal{B}A[\tilde{\Phi}]$.

\textbf{(i)} By construction, $\tilde{\Phi}$ is representable w.r.t.\@ $\iota$ by means of a $\rho\in(\mathcal{B}A[\tilde{\Phi}])^\#$. By \cref{thm:extension_BA}, we also know that there exists a $\sigma\in (\mathcal{B}A[\tilde{\Phi}])^\#$ such that $\sigma\odot \alpha$ represents $\alpha$ w.r.t.\@ $\iota\circ i$ for all $\alpha\in\mathcal{B}A$. We will show that
\[
\tau = \lambda^\ast x.\rho\odot (\lambda^\ast y.x\odot(\sigma\odot y))\in (\mathcal{B}A[\tilde{\Phi}])^\#
\]
represents $\Phi$ w.r.t.\@ $\iota\circ i$. So let $F\in\mathcal{B}_2A$ be such that $\Phi(F)$ is defined, and suppose that $\beta\in \mathcal{B}A$ represents $F$ w.r.t.\@ $\iota\circ i$. First of all, we claim that $(\lambda^\ast y.x\odot(\sigma\odot y))[\beta/x]$ represents $\hat{F}$ w.r.t.\@ $\iota$. In order to show this, let $\alpha\in\mathcal{B}A$ be such that $\hat{F}(\alpha)$ is defined. Then:
\[
(\lambda^\ast y.x\odot(\sigma\odot y))[\beta/x] \odot \alpha \preceq \beta\odot (\sigma\odot \alpha) \preceq \widehat{F(\alpha)} = \hat{F}(\alpha),
\]
since $\sigma\odot\alpha$ represents $\alpha$ w.r.t.\@ $\iota\circ i$ and $\beta$ represents $F$ w.r.t.\@ $\iota\circ i$. This proves the claim, and it follows that
\[
\tau\odot\beta \preceq \rho\odot \left((\lambda^\ast y.x\odot(\sigma\odot y))[\beta/x]\right) \preceq \tilde{\Phi}(\hat{F}) = \widehat{\Phi(F)},
\]
as desired.

\textbf{(ii)} Suppose that $s\in B^\#$ represents $\Phi$ w.r.t.\@ $g$, and consider $h\colon \mathcal{B}A\pf B$ defined by $h(\alpha) = \{b\in B\mid b\mbox{ represents }\alpha\mbox{ w.r.t.\@ }g\}$. Since $h$ is also decidable and chain-continuous, it suffices to show that $\tilde{\Phi}\in \mathcal{B}_2\mathcal{B}A$ is representable w.r.t.\@ $h$. Then \cref{thm:type2} tells us that $h$ is also a morphism $\mathcal{B}A[\tilde{\Phi}]\pf B$, and \cref{thm:extension_BA} implies that this is the largest partial applicative morphism by means of which $g$ factors through $\iota\circ i$.

We will show that
\[
t = \lambda^\ast x.\sf{k}(s(\lambda^\ast y.xyj))\in B^\#
\]
represents $\tilde{\Phi}$ w.r.t.\@ $h$, where $j$ is any element from $g(\sf{i})\cap B^\#$. So let $F\in\mathcal{BB}A$ be such that $\tilde{\Phi}(F)$ is defined, and suppose that $b\in B$ represents $F$ w.r.t.\@ $h$. First of all, we claim that $(\lambda^\ast y.xyj)[b/x]$ represents $\lambda\alpha. F(\alpha)(\sf{i})$ as a second-order functional w.r.t.\@ $g$. In order to prove the claim, let $\alpha\in\mathcal{B}A$ be such that $F(\alpha)(\sf{i})$ is defined, and let $c\in B$ represent $\alpha$ w.r.t.\@ $g$. Then $c\in h(\alpha)$, which means that $bc$ is defined and in $h(F(\alpha))$, i.e., $bc$ represents $F(\alpha)$ w.r.t.\@ $g$. Since $j\in g(\sf{i})$, this yields that $\left((\lambda^\ast y.xyj)[b/x]\right)c\preceq bcj$ is defined and an element of $g(F(\alpha)(\sf{i}))$, which proves the claim. Now we find that $s\left((\lambda^\ast y.xyj)[b/x]\right)\in\Phi(\lambda\alpha.F(\alpha)(\sf{i}))$, so it follows that $tb\preceq \sf{k}\left(s\left((\lambda^\ast y.xyj)[b/x]\right)\right)$ is defined and represents $\tilde{\Phi}(F)$. In other words, we have $tb\in h(\tilde{\Phi}(F))$, as desired.
\end{proof}

In \cref{sec:type2}, we investigated the possibility of making an $F\in\mathcal{B}_2A$ effectively representable by adjoining $\hat{F}\in\mathcal{BB}A$ to $\mathcal{B}A$. It turned out that this will not work in general, the Kleene functional $E\in\mathcal{B}_2\mathcal{K}_1$ being a counterexample. Therefore, it may seem strange that a similar strategy \emph{does} work for the third-order case! Let us explain why this is so. In the second-order case, the task was to construct, given a representer of $\hat{F}$, a representer of $F$. Now, a representer of $F$ eats representers of $\alpha\in\mathcal{B}A$, whereas a representer of $\hat{F}$ wants to eat $\alpha$ itself. So the task really is to effectively find, given a representer of $\alpha$, the function $\alpha$ itself so that it can be fed to the representer of $\hat{F}$. But the problem is exactly that, once we add an \emph{oracle} to $\mathcal{B}A$, this is no longer possible in general. On the other hand, the \emph{converse} construction obviously does work, i.e., given a representer of $F$, we can construct a representer of $\hat{F}$ (see also \cref{ex:no_transfer}). Since we constructed $\tilde{\Phi}$ in such a way that $\tilde{\Phi}(\hat{F})\simeq \widehat{\Phi(F)}$, this is precisely what we need to construct a representer for $\Phi$, given a representer for $\tilde{\Phi}$. We can also put this as follows: in the business of representing $\Phi$, the representers of second-order functionals $F\in \mathcal{B}_2A$ are not the things to be constructed, but the things that are \emph{given}. Unfortunately, this also reveals that the current strategy can probably not be pushed beyond the third-order case, because in the fourth-order case, things will be `the wrong way around' again.

We close the paper with an interesting corollary of \cref{thm:type3}. Suppose we have a decidable partial applicative morphism $g\colon A\pf B$ and an $f\in\mathcal{B}A$ which is effectively representable w.r.t.\@ $g$. Then we know that, besides $f$ and the functions that were already effectively representable in $A$, other functions must become effectively representable w.r.t.\@ $g$ as well. So a natural question to ask here is: what is the minimal set of elements of $\mathcal{B}A$ that must become effectively representable w.r.t.\@ a partial applicative morphism as soon as $f$ is effectively representable? We can reformulate the question as follows. Let $\er(f)$ denote the class of all decidable partial applicative morphism $g\colon A\pf B$ with respect to which $f$ is effectively representable. Then we are interested in the set:
\[
\foe(f):= \bigcap_{g\in\er(f)}\{\alpha\in\mathcal{B}A\mid\alpha\mbox{ is effectively representable w.r.t.\@ }g\},
\]
where $\foe$ should be read as `first-order effect'. The construction from \cref{sec:order1} tells us what this set is: indeed, we know that $\iota_f\colon A\to A[f]$ is in $\er(f)$ and that every element of $\er(f)$ factors through $\iota_f$. This means that $\foe(f)$ is simply the set of $\alpha\in\mathcal{B}A$ that are effectively representable w.r.t.\@ $\iota_f$, which we know to be $\langle (\mathcal{B}A)^\#\cup\{f\}\rangle$.

As we know, the representability of second-order functionals does not behave as nicely from the point of view of $\sf{pPCA}$. But we can still ask, given an $F\in\mathcal{B}_2A$, which \emph{first-order} functions are forced to become effectively representable if $F$ is effectively representable. For simplicity, we will assume that $A$ is discrete, so that we do not have to worry about chain-completeness and chain-continuity. As above, we let $\er(F)$ denote the class of all decidable partial applicative morphisms $g\colon A\pf B$ such that $F$ is effectively representable w.r.t.\@ $g$, and we write
\[
\foe(F):= \bigcap_{g\in\er(F)}\{\alpha\in\mathcal{B}A\mid\alpha\mbox{ is effectively representable w.r.t.\@ }g\},
\]
for the `first-order effect' of $F$. Then $\foe(F)$ is also of the form $\langle (\mathcal{B}A)^\#\cup\{f\}\rangle$ for some $f\in\mathcal{B}A$, namely, the $f$ as in \cref{thm:type2}.

For the third-order case, we can pose the analogous question. Let $A$ be a discrete PCA and consider $\Phi\in\mathcal{B}_3A$. Let $\er(\Phi)$ denote the class of all decidable partial applicative morphisms $g\colon A\pf B$ such that $\Phi$ is effectively representable w.r.t.\@ $g$, and we write
\[
\foe(\Phi):= \bigcap_{g\in\er(\Phi)}\{\alpha\in\mathcal{B}A\mid\alpha\mbox{ is effectively representable w.r.t.\@ }g\},
\]
for the `first-order effect' of $\Phi$. Then \cref{thm:type3} tells us that $\foe(\Phi)$ consists of all $\alpha\in\mathcal{B}A$ that are effectively representable w.r.t.\@ $\iota_{\Phi}$. We can now ask: is this set also of the form $\langle (\mathcal{B}A)^\#\cup\{f\}\rangle$ for some $f\in\mathcal{B}A$? In order to show that the answer is `yes', we first need the following definition.

\begin{defn}
Let $A$ be a discrete PCA and let $g\colon A\pf B$ be a partial applicative morphism. Then $g$ is called \emph{discrete} if $g(a)\cap g(a')=\emptyset$ for every two \emph{distinct} $a,a'\in A$.
\end{defn}

The following result is based on Theorem 2.12 of \cite{faberjaap}.
\begin{lem}\label{lem:insur}
Let $A$ be a discrete PCA and let $g\colon A\to B$ be a \emph{total} applicative morphism which is discrete, projective and c.d. Then there exists an $f\in \mathcal{B}A$ such that:
\[
\{\alpha\in\mathcal{B}A\mid \alpha\mbox{ is effectively representable w.r.t.\@ }g\} = \langle(\mathcal{B}A)^\#\cup\{f\}\rangle.
\]
\end{lem}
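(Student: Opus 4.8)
The plan is to recognise the set in question as a filter of $\mathcal{B}A$ and then produce a single generator for it over $(\mathcal{B}A)^\#$. First I would apply \cref{thm:extension_BA} to the decidable morphism $g$, obtaining the largest $h\colon\mathcal{B}A\pf B$ with $hi\simeq g$; then $\Psi:=\{\alpha\in\mathcal{B}A\mid\alpha\text{ is eff.\ rep.\ w.r.t.\ }g\}=\dom^\# h$, which is a filter of $\mathcal{B}A$ by the corollary following \cref{thm:extension_BA}. Since the application on $\mathcal{B}A$ is total, \cref{lem:generate} gives $\langle(\mathcal{B}A)^\#\cup\{f\}\rangle=\upset\{\rho f\mid\rho\in(\mathcal{B}A)^\#\}$ for any $f\in\mathcal{B}A$. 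Hence once an $f\in\Psi$ has been chosen, the inclusion $\langle(\mathcal{B}A)^\#\cup\{f\}\rangle\subseteq\Psi$ is automatic ($\Psi$ being a filter containing $(\mathcal{B}A)^\#$ and $f$), and the whole content of the lemma is to choose $f$ so that, conversely, every $\alpha\in\Psi$ satisfies $\rho f\le\alpha$ for some $\rho\in(\mathcal{B}A)^\#$.

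To construct $f$, I would use that $g$, being total, projective and c.d., has a right adjoint $g^{\dagger}\colon B\pf A$ in $\sf{pPCA}$ (\cref{thm:cd=rightadjoint}(i)); by projectivity I may also assume $g$ is single-valued, $g(a)=\downset\{g_0(a)\}$ (one checks this replacement preserves discreteness, totality and computational density), and discreteness then says the $g_0(a)$ are pairwise without common lower bound, so that an element of $B$ lying below some $g_0(a)$ determines $a$ uniquely. Fixing $n\in B^\#$ witnessing computational density, I would, following the method of \cite{faberjaap}, Theorem 2.12, build $f$ as an \emph{internal encoding of $g$}: roughly, $f$ packages, for each relevant datum presented to it, the outcome of the corresponding $g$-computation read back through $g^{\dagger}$, discreteness making the read-back unambiguous and projectivity making $g^{\dagger}$ assign honest names, so that $f$ is genuinely a member of $\mathcal{B}A$. (In the toy case $B=A[r]$, $g=\iota_r$, this $f$ is simply the constant function $\hat r$, and the general construction is the evident analogue with $r$ replaced by the data encoding $g$.) That $f\in\Psi$, i.e.\ that $f$ is effectively representable w.r.t.\ $g$, is then a bookkeeping check using realizers for the unit and counit of $g\dashv g^{\dagger}$ together with $n$.

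For the remaining inclusion, take $\alpha\in\Psi$ with a representer $s\in B^\#$ w.r.t.\ $g$; by computational density pick $r\in A^\#$ with $n\cdot g(r)\subseteq\downset\{s\}$, so that $n\cdot g(r)\cdot g(a)\subseteq g(\alpha(a))$ for all $a\in\dom\alpha$. I would then define $\rho\in(\mathcal{B}A)^\#$ --- represented by an element of $A^\#$ built from $r$ and the sequence combinators --- which, interrogating the oracle $f$, simulates this $g$-side computation of $g(\alpha(a))$ from $g(r)$ and $g(a)$ and uses the $g^{\dagger}$-part encoded in $f$ to decode $\alpha(a)$ itself, giving $(\rho f)(a)\le\alpha(a)$ for $a\in\dom\alpha$, i.e.\ $\alpha\ge\rho f\in\langle(\mathcal{B}A)^\#\cup\{f\}\rangle$; this mirrors the $B=A[r]$ case, where $\rho$ interrogates $\hat r$ once to learn $r$ and then outputs $q\,r\,a$. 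The main obstacle is pinning down the definition of $f$ so that it meets both demands simultaneously: effective representability w.r.t.\ $g$ pushes $f$ to be as ``small'' as possible, whereas the decoding step requires $f$ to carry all of the first-order content of $g$ --- and it is precisely the hypotheses that $g$ be discrete and projective that reconcile the two, by making the decoding deterministic and the names honest. Everything else is a routine fixpoint/combinator argument of the kind carried out in the proofs of \cref{thm:universal_Af} and \cref{thm:extension_BA}.
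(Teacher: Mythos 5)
Your skeleton assembles the right ingredients --- the right adjoint $h\colon B\pf A$ supplied by \cref{thm:cd=rightadjoint}(i), discreteness as the source of unambiguous decoding, and the reduction of the lemma to exhibiting a single generator $f$ of the filter of functions effectively representable w.r.t.\@ $g$ --- but the one step that carries the entire content of the statement, namely the definition of $f$, is left as a ``rough'' description and explicitly flagged as ``the main obstacle''. That is a genuine gap, not bookkeeping. The paper's definition is short and concrete: one first checks that the composite $hg\colon A\pf A$ is itself discrete (if $hg(a)\cap hg(a')\neq\emptyset$ then, since $g$ preserves the order up to a realizer and $ghg\simeq g$, also $g(a)\cap g(a')\neq\emptyset$, whence $a=a'$), and then sets $f(a)=a'$ if and only if $a\in hg(a')$; so $f$ is the single-valued partial \emph{decoding} function of $hg$, rather than an ``internal encoding of $g$''. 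With this definition the two demands you describe as being in tension are both immediate: $g(a)\subseteq ghg(f(a))$, so any realizer of $ghg\leq g$ represents $f$ w.r.t.\@ $g$; and conversely $f$ turns any element of $hg(a')$ back into $a'$, which is exactly what the decoding step requires. (Your toy computation is also slightly off: for $g=\iota_r$ this recipe produces $f=\lambda b.\,br$ rather than $\hat r$; the two generate the same filter, but only the former falls out of the construction.) Projectivity is used only to obtain the right adjoint via \cref{thm:cd=rightadjoint}(i); the reduction to a single-valued $g$ plays no role in the paper's argument.

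For the converse inclusion the paper also proceeds differently from your plan: instead of hand-building a $\rho$ that interrogates $f$, it factors $g$ as $g'\iota_f$ with $g'\colon A[f]\to B$ acting as $g$, notes that $g'$ is again total, projective and c.d.\@ with right adjoint $h'$ satisfying $h'g'\simeq hg$, and observes that any representer of $f$ in $(A[f])^\#$ realizes $h'g'\leq\id_{A[f]}$, whence $g'h'\simeq\id_{A[f]}$. Consequently effective representability w.r.t.\@ $g$ coincides with effective representability w.r.t.\@ $\iota_f$, and the identification of the latter set with $\langle(\mathcal{B}A)^\#\cup\{f\}\rangle$ was already established at the end of \cref{sec:BA}. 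Your direct simulation could be made to work once $f$ is defined as above (inside $A$, compute an element of $hg(\alpha(a))$ using a tracker for $h$ and a realizer of $\id_A\leq hg$, then consult the oracle $f$ once), but as written neither half of your argument can be verified, because both hinge on the unstated definition of $f$.
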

\begin{proof}
Since $g$ is total, projective and c.d., \cref{thm:cd=rightadjoint}(i) tells us that $g$ has a right adjoint $h\colon B\pf A$ in $\sf{pPCA}$. We claim that the partial applicative morphism $hg\colon A\pf A$ is discrete. Suppose we have $a,a'\in A$ such that $hg(a)\cap hg(a')$ is nonempty. Since $g$ preserves the order up to a realizer, it follows that $ghg(a)\cap ghg(a')$ is also nonempty. Since $g\dashv h$, we have $ghg\simeq g$, so this implies that $g(a)\cap g(a')$ is nonempty, so $a=a'$ by the discreteness of $g$.

Now define the partial function $f\colon A\pf A$ by: $f(a) =a'$ if and only if $a\in hg(a')$, which is well-defined by the discreteness of $hg$. Then $g(a) \subseteq ghg(f(a))$, so if $s\in B^\#$ realizes $ghg\leq g$, then $s$ also represents $f$ w.r.t.\@ $g$. This means that $g$ factors through $\iota_f$ by means of a $g'\colon A[f]\to B$, which is defined simply by $g'(a) = g(a)$ for $a\in A$. Now $g'$ is total, projective and c.d.\@ as well, so it has a right adjoint $h'\colon B\pf A$. Moreover, we recall that $\iota_f$ has a right adjoint $k\colon A[f]\to A$ satisfying $\iota_f k\simeq\id_{A[f]}$.
\[
\begin{tikzcd}[column sep=huge, row sep=huge]
A \arrow[rd, "\iota_f", shift left] \arrow[rr, "g", shift left] &                                                                 & B \arrow[ll, "h", shift left] \arrow[ld, "h'", shift left] \\
                                                                & {A[f]} \arrow[ru, "g'", shift left] \arrow[lu, "k", shift left] &                                                           
\end{tikzcd}
\]
Since $g\simeq g'\iota_f$, we also have $h\simeq kh'$, hence $\iota_f h\simeq \iota_fkh' \simeq h'$. This means we can assume without loss of generality that $h'(b) = h(b)$ for all $b\in B$. In particular, $h'g'(a) = hg(a)$ for all $a\in A$. But now it is clear that any representer $r\in (A[f])^\#$ of $f$ will also realize the inquality $h'g'\leq \id_{A[f]}$. Combining this with $g'\dashv h'$ yields $g'h'\simeq \id_{A[f]}$. This implies that $\alpha\in\mathcal{B}A$ is effectively representable w.r.t.\@ $g\simeq g'\iota_f$ if and only if $\alpha$ is effectively representable w.r.t.\@ $\iota_f$; if and only if $\alpha\in\langle(\mathcal{B}A)^\#\cup\{f\}\rangle$.
\end{proof}

\begin{rem}
Observe that the proof of \cref{lem:insur} also implies that the image topos of the geometric morphism $\rt(B)\to \rt(A)$ induced by $g$ is also a realizability topos, namely $\rt(A[f])$.
\end{rem}

\begin{cor}
Let $A$ be a discrete PCA and let $\Phi\in\mathcal{B}_3A$. Then there exists an $f\in\mathcal{B}A$ such that $\foe(\Phi) = \langle(\mathcal{B}A)^\#\cup\{f\}\rangle$.
\end{cor}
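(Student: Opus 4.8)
The plan is to recognize $\iota_\Phi$ as an instance of the morphism $g$ appearing in \cref{lem:insur}; the function $f$ we seek is then exactly the one that lemma produces.

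First I would pin down $\foe(\Phi)$. Because $A$ is discrete, \cref{ex:disc=cc} says every partial applicative morphism out of $A$ is chain-continuous, so every $g\in\er(\Phi)$ is both decidable and chain-continuous and \cref{thm:type3}(ii) applies to it, giving an $h\colon A[\Phi]\pf B$ with $h\iota_\Phi\simeq g$. Since effective representability of a first-order function transfers along partial applicative morphisms and is invariant under $\simeq$, anything effectively representable w.r.t.\@ $\iota_\Phi$ is effectively representable w.r.t.\@ every such $g$. Conversely, $\iota_\Phi$ is c.d.\@ (hence decidable) and represents $\Phi$ effectively by \cref{thm:type3}(i), so $\iota_\Phi\in\er(\Phi)$. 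Hence
\[
\foe(\Phi)=\{\alpha\in\mathcal{B}A\mid \alpha\text{ is effectively representable w.r.t.\@ }\iota_\Phi\},
\]
which is the identification already asserted in the paragraph preceding the corollary.

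Next I would verify that $\iota_\Phi\colon A\to A[\Phi]$ satisfies the hypotheses of \cref{lem:insur}: total, c.d., projective and discrete. Totality and computational density are part of \cref{thm:type3}. For the other two, unwind $\iota_\Phi=\iota_{\tilde\Phi}\circ i$: since $A$ is discrete we have $\downset\{\hat a\}=\{\hat a\}$ in $\mathcal{B}A$, so $i(a)=\{\hat a\}$, and as $A[\tilde\Phi]$ carries the same order as $\mathcal{B}A$, likewise $\iota_{\tilde\Phi}(\hat a)=\{\hat a\}$; therefore $\iota_\Phi(a)=\{\hat a\}$ for all $a$. This is a principal downset, so $\iota_\Phi$ is single-valued, in particular projective, and $\{\hat a\}\cap\{\hat{a'}\}=\emptyset$ for $a\neq a'$, so $\iota_\Phi$ is discrete.

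Finally, applying \cref{lem:insur} to $g=\iota_\Phi$ yields an $f\in\mathcal{B}A$ with $\{\alpha\in\mathcal{B}A\mid\alpha\text{ effectively representable w.r.t.\@ }\iota_\Phi\}=\langle(\mathcal{B}A)^\#\cup\{f\}\rangle$; together with the first step this is the claim. The only delicate point is the first step, namely that \cref{thm:type3}(ii) may be invoked for \emph{every} $g\in\er(\Phi)$ — this is precisely where discreteness of $A$ does the work, supplying chain-continuity for free; the rest is bookkeeping with the definition of $\iota_\Phi$.
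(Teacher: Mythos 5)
Your argument is correct and is essentially the paper's proof: the identification of $\foe(\Phi)$ with the set of functions effectively representable w.r.t.\@ $\iota_\Phi$ is exactly the reduction made in the paragraph preceding the corollary, and the paper then applies \cref{lem:insur} after noting that totality, discreteness, projectivity and computational density of $\iota_\Phi$ are ``easy to check'' --- the checks you supply (totality and c.d.\@ from \cref{thm:type3}, and $\iota_\Phi(a)=\{\hat a\}$ being a singleton because $\hat a$ is total hence minimal in $\mathcal{B}A$, giving single-valuedness and discreteness) are the intended ones. The only cosmetic slip is writing $A[\tilde\Phi]$ where you mean $\mathcal{B}A[\tilde\Phi]$; the substance is fine.
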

\begin{proof}
This follows from \cref{lem:insur} if we can show that $\iota_\Phi$ is total, discrete, projective and c.d. All of these are easy to check.
\end{proof}

\end{document}